\definecolor{my_color}{rgb}{0,0.5,0.5}
\definecolor{MIXT}{rgb}{0.8,0.5,0.2}
\definecolor{mixt}{rgb}{0.5,0.3,0.2}
\definecolor{sin}{rgb}{0,0.5,0.5}
\definecolor{darkblue}{rgb}{0,0.1,0.8}
\definecolor{redi}{rgb}{0.5,0,0.4}
\numberwithin{equation}{section}
\font\tencyr=wncyr10 
\font\tencyi=wncyi10 
\font\tencysc=wncysc10 
\def\rus{\tencyr\cyracc}
\def\rusi{\tencyi\cyracc}
\def\rusc{\tencysc\cyracc}
\newtheorem{thm}{Theorem}[section]
\newtheorem{lm}[thm]{Lemma}
\newtheorem{cl}[thm]{Corollary}
\newtheorem{prop}[thm]{Proposition}
\theoremstyle{remark}
\newtheorem{rmk}[thm]{Remark}
\theoremstyle{definition}
\newtheorem{ex}[thm]{Example} 
\newtheorem{df}{Definition}
\newcommand {\be}{{\mathfrak b}}
\newcommand {\g}{{\mathfrak g}}
\newcommand {\h}{{\mathfrak h}}
\newcommand {\ka}{{\mathfrak k}}
\newcommand {\el}{{\mathfrak l}}
\newcommand {\q}{{\mathfrak q}}
\newcommand {\te}{{\mathfrak t}}
\newcommand {\gln}{{\mathfrak{gl}}_n}
\newcommand {\sln}{{\mathfrak{sl}}_n}
\newcommand {\son}{{\mathfrak {so}}_{n}}
\newcommand{\gt}{\mathfrak}
\newcommand {\eus}{\EuScript}
\newcommand {\gA}{{\eus A}}
\newcommand {\gS}{{\eus S}}
\newcommand {\ap}{\alpha}
\newcommand {\lb}{\lambda}
\newcommand {\vp}{\varphi}
\newcommand{\SL}{{\rm SL}}
\newcommand{\GL}{{\rm GL}}
\newcommand{\SO}{{\rm SO}}
\newcommand {\N}{{\mathcal N}}
\newcommand {\co}{{\mathcal O}}
\newcommand {\BC}{{\mathbb C}}
\newcommand {\BR}{{\mathbb R}}
\newcommand {\bL}{{\mathbb L}}
\newcommand {\md}{/\!\!/}
\newcommand {\ad}{{\mathrm{ad\,}}}
\newcommand {\Ann}{{\mathrm{Ann}}}
\newcommand {\gr}{{\mathrm{gr\,}}}
\newcommand {\ind}{{\mathrm{ind\,}}}
\newcommand {\Lie}{{\mathsf{Lie\,}}}
\newcommand {\rk}{{\mathsf{rk\,}}}
\newcommand {\cork}{{\mathsf{cork}}}
\newcommand {\trdeg}{{\mathsf{tr.deg\,}}}
\newcommand {\tri}{\mathfrak{sl}_2}
\newcommand {\ov}{\overline}
\newcommand {\bb}{{\boldsymbol{b}}}
\newcommand {\bl}{{\boldsymbol{\lambda}}}
\newcommand {\beq}{\begin{equation}}
\newcommand {\eeq}{\end{equation}}
\renewcommand{\le}{\leqslant}
\renewcommand{\ge}{\geqslant}
\newcommand {\bbk}{\Bbbk}
\begin{document}
\setlength{\parskip}{3pt plus 2pt minus 0pt}
\hfill { {\color{blue}\scriptsize  February 25, 2019}}
\vskip1ex

\title[Complete integrability on coadjoint orbits]
{
Poisson-commutative subalgebras and complete integrability 
on non-regular coadjoint orbits and  flag varieties}  
\author[D.\,Panyushev]{Dmitri I. Panyushev}
\address[D.\,Panyushev]%
{Institute for Information Transmission Problems of the R.A.S., Bolshoi Karetnyi per. 19, 
Moscow 127051, Russia}
\email{panyushev@iitp.ru}
\author[O.\,Yakimova]{Oksana S.~Yakimova}
\address[O.\,Yakimova]{Universit\"at zu K\"oln,
Mathematisches Institut, Weyertal 86-90, 50931 K\"oln, Deutschland}
\email{yakimova.oksana@uni-koeln.de}
\thanks{The research of the first author was  supported by the Russian Foundation for Sciences.  The second author is
funded by the Deutsche Forschungsgemeinschaft (DFG, German Research Foundation) --- project number 330450448.} 
\keywords{integrable systems, moment map, coisotropic actions,  coadjoint orbits}
\subjclass[2010]{17B63, 14L30, 17B08, 17B20, 22E46}
\dedicatory{To the memory of Bertram Kostant}
\begin{abstract}
The purpose of this paper is to bring together various  loose ends in the theory of integrable systems. 
For a semisimple Lie algebra $\g$, we obtain several results on completeness of homogeneous Poisson-commutative subalgebras of $\gS(\g)$ on coadjoint orbits. This concerns, in particular, Mishchenko--Fomenko and Gelfand--Tsetlin subalgebras. 
\end{abstract}
\maketitle

\section*{Introduction}
\noindent
Symplectic manifolds or varieties $(M,\omega)$ provide a natural setting for integrable systems.  
The algebra of ``suitable'' functions on $M$, $\text{Fun}(M)$,  carries a Poisson bracket, and 
connections with Geometric Representation Theory occur if a  {\it Hamiltonian action\/} of 
a Lie group $Q$ on $M$ is given. 
Let  $\mu\!: M\to \q^*=(\Lie Q)^*$ be the corresponding moment mapping and $\gS(\q)$ the symmetric algebra of $\q$. Then $\gS(\q)$ is a Poisson algebra and 
the co-morphism $\mu^*:\gS(\q)\to \text{Fun}(M)$ is a Poisson homomorphism.
Therefore, if $\eus A\subset\gS(\q)$ is Poisson-commutative, then so is $\mu^*(\eus A)$.
For a {\it coisotropic} Hamiltonian action $(Q, M)$, one obtains a 
completely integrable system on $M$, see~\cite{int}. 
The key point here is the existence of a Poisson-commutative algebra $\eus A\subset\gS(\gt q)$ 
that is {\it complete}, i.e., it provides a complete family in involution on a generic $Q$-orbit in the image of 
$\mu$,  see Definition~\ref{com-fam}.

Two most celebrated examples of Poisson-commutative subalgebras are 
the Gelfand--Tsetlin subalgebras of $\gS(\sln)$ and $\gS(\son)$. 
Their definition goes back to \cite{gt-1,gt-2,gs1,gs2}. 
The success of that construction heavily relies on the existence of chains of coisotropic actions. 
We prove that both these algebras are complete on {\bf every} coadjoint orbit. For arbitrary simple Lie algebras $\g$, a large supply of Poisson-commutative subalgebras of $\gS(\g)$ is given by the argument shift method, see below.

Our ground field $\bbk$ is algebraically closed and of characteristic $0$.   
Let $G$ be a reductive algebraic group over $\bbk$ with $\g=\Lie G$.
Poisson-commutative subalgebras of  $\gS(\g)$ attract a great deal of attention, because of
their relationship to  geometric representation theory. 
If ${\eus A}\subset\gS(\g)$ is Poisson-commutative, then 
$\trdeg \eus A\le \bb(\g)=\frac{1}{2}(\dim\g+\rk\g)$. This is the dimension of a Borel subalgebra of $\g$.
(For arbitrary Lie algebras $\q$, the rank should be replaced with the {\it index}, $\ind\q$.)
In~\cite{mf}, a certain 
Poisson-commutative subalgebra $\eus F_a\subset\eus S(\g)$ is constructed for any $a\in\g^*$. Following~\cite{v:sc}, 
we say that $\eus F_a$ is the {\it Mishchenko--Fomenko subalgebra (associated with $a$)} or just an 
{\it MF-subalgebra}. Say that $a\in\g^*$ is {\it regular\/} if $\dim(Ga)=\dim\g-\rk\g$ and write $\g^*_{\sf reg}$ 
for the set of regular elements. It is known that $\trdeg \eus F_a=\bb(\g)$ if and only if $a\in\g^*_{\sf reg}$. 
The importance of MF-subalgebras and their quantum counterparts 
is advocated e.g. in~\cite{FFR,v:sc,ko09}. 

We prove that, for any $a\in\g^*_{\sf reg}$, $\eus F_a$ is complete on each regular and each closed 
$G$-orbit (Theorem~\ref{thm-s}). The closed orbits are of extreme importance in view of their 
connection with flag varieties and integrable systems related to the compact form of $\g$.   

The crucial r\^ole of nilpotent $G$-orbits is seen in the observation that if an arbitrary homogeneous 
Poisson-commutative subalgebra of $\gS(\g)$ is complete on any nilpotent orbit, then it is complete on 
every orbit, see Proposition~\ref{lm-c-n} and  Corollary~\ref{cl-nilp}. This implies that there is a dense 
open subset $U\subset\g^*_{\sf reg}$ such that $\eus F_a$ ($a\in U$) is complete on {\bf every} 
$G$-orbit, see Proposition~\ref{mf-generic}. Another striking feature is that the question of completeness on regular orbits is reduced to 
the unique regular nilpotent orbit.

The starting point of the Gelfand and Tsetlin construction \cite{gt-1,gt-2} 
for  $\g=\sln$ or $\son$, is 
a chain of Lie algebras
\\[.3ex]
\centerline{
$\g=\g(n)\supset\g(n-1)\supset\dots \supset \g(1)$,} 
\\[.3ex] 
where $\g(k)=\gt{sl}_k$ or $\gt{so}_k$. 
The  {\it Gelfand--Tsetlin (=GT) subalgebra\/} $\hat{\eus C}$  of the enveloping algebra ${\eus U}(\g)$
is generated by the centres of ${\eus U}(\gt g(k))$ with $1\le k\le n$. 
Then $\eus C:=\gr(\hat{\eus C})$ is a 
Poisson-commutative subalgebra of $\gS(\g)$ with $\trdeg\eus C=\bb(\g)$.
The main reason behind many nice features of the GT-subalgebras $\eus C$ is that 
$(\GL_n,\GL_{n-1})$ and $(\SO_n,\SO_{n-1})$ are {\it strong Gelfand pairs}. 
In a certain sense, these are the only strong Gelfand pairs. In Section~\ref{sub-strong}, 
we gather various characterisations of these pairs and explain, in particular,
how coisotropic actions come into play here. 
For $\sln$, it was known for a while that the algebra $\eus C$ is complete on any {\bf regular} $G$-orbit, 
see~\cite[3.8]{KoW}. Recently, this completeness result was obtained in the orthogonal case 
in~\cite{EvC}.  In both cases, we prove that, for {\bf any} $x\in\g$,
$\eus C$ is complete on $Gx$   
and the $G(n{-}1)$-action on $Gx$ is coisotropic.  
Moreover, 
our considerations with nilpotent orbits provide different, simpler proofs in the regular case.


Questions on the completeness of $\eus F_a$ on $Gx\subset\g^*$ are related to the
{\it Elashvili conjecture}, which asserts that $\ind\g^x=\rk\g$ for any $x\in\g^*$.   
In Section~\ref{sect:2}, we report on the current state of this conjecture. 
Theorem~\ref{gt-com} on the completeness of $\eus C\subset \eus U(\sln)$ and the fact that 
this $\eus C$ is a limit of MF-subalgebras~\cite{v:sc}
yield a new proof of Elashvili's conjecture in type {\sf A}, see Remark~\ref{rem-el}({\sf i}). This proof has 
a potential of being generalised to arbitrary $\g$.

Two different geometric features of the Gelfand--Tsetlin construction are 
discovered in~\cite{gs1} and~\cite{KoW}. Guillemin and Sternberg in \cite{gs1} 
work with compact Lie groups over $\BR$ and  exploit a chain of subalgebras 
\[
    \gt u_n \supset \gt u_{n-1} \supset \ldots \supset \gt u_1.
\]
They obtain an integrable system (= complete family of functions), which we call the 
$\bl$-system, see Section~\ref{subs:lambda} for the relation with the GT-subalgebra $\eus C$ in type {\sf A}. 
Briefly speaking, the $\bl$-system is generated by the eigenvalues  
\[
    \{ \bl_k^{[m]}\mid 1\le m<n \ \& \ 1\le k\le n{-}m\}
\]
related to the projections $\gt u_n^*\to \gt u_{n-m}^*$.
This system is examined in details in Section~\ref{subsec-cois}.  
The geometric aspect is that it integrates to an action of a compact torus~\cite{gs1}.  
In~\cite{KoW}, Kostant and Wallach have integrated $\eus C$ to an action of a unipotent group. We hope to explore related geometric properties of MF-subalgebras in a forthcoming article.

In Section~\ref{sect:5}, we study actions of reductive subgroups $H\subset G$ on 
$Gx\subset \gt g^*$. These $H$-actions are obviously Hamiltonian and 
we show that several numerical characteristics of them, 
such as {\it defect\/} and {\it corank},  
are constant along a $G$-sheet $S\subset \g\simeq\g^*$. 
This is very much in the spirit of the useful result that the {\it complexity\/} and {\it rank} of a $G$-orbit 
are constant along any sheet $S\subset\g$, see~\cite[Sect.\,5]{p94}.
Building on the insights of \cite{av}, we prove that the corank does not increase on the closure of a  
sheet, see Theorem~\ref{thm-d}. 
Our completeness result for $\eus C$ in the orthogonal case,  arises as an application of this general theory to the pair $(G,H)= (\SO_{n},\SO_{n-1})$.

\section{Poisson brackets and Mishchenko--Fomenko subalgebras}
\label{sect:prelim}

\noindent
Let $Q$\/ be a connected affine algebraic group with Lie algebra $\q$. The symmetric algebra 
$\gS (\q)$ over $\bbk$ is identified with the graded algebra of polynomial functions on $\q^*$ and we also 
write $\bbk[\q^*]$ for it.  

Let $\q^\xi$ denote the stabiliser in $\q$ of $\xi\in\q^*$. The {\it index of}\/ $\q$, $\ind\q$, is the minimal codimension of $Q$-orbits in $\q^*$. Equivalently,
$\ind\q=\min_{\xi\in\q^*} \dim \q^\xi$. By Rosenlicht's theorem~\cite[2.3]{VP}, one also has
$\ind\q=\trdeg\bbk(\q^*)^Q$. The ``magic number'' associated with $\q$ is $\bb(\q)=(\dim\q+\ind\q)/2$. 
Since the coadjoint orbits are even-dimensional, the magic number is an integer. If $\q$ is reductive, then  
$\ind\q=\rk\q$ and $\bb(\q)$ equals the dimension of a Borel subalgebra. The Poisson--Lie bracket on
$\bbk[\q^*]$ is defined on the elements of degree $1$ (i.e., on $\q$) by $\{x,y\}:=[x,y]$. 
The {\it Poisson centre\/} of $\gS(\q)$ is 
\[
    \gS(\q)^\q=\{H\in \gS(\q)\mid \{H,x\}=0 \ \ \forall x\in\q\} .
\] 
Since $Q$ is connected, we also have $\gS(\q)^\q=\gS(\q)^{Q}=\bbk[\q^*]^Q$.
The set of $Q$-{\it regular\/} elements of $\q^*$ is 
$\q^*_{\sf reg}=\{\eta\in\q^*\mid \dim \q^\eta=\ind\q\}$. Set $\q^*_{\sf sing}=\q^*\setminus \q^*_{\sf reg}$.

Take $\gamma\in\q^*$. Note that $T^*_\gamma \q^* \simeq\q$. Therefore 
the differential $\textsl{d}_\gamma F$  of $F\in\gS(\q)$ can be regarded as an element of $\q$. Let $\hat\gamma=\gamma([\,\,,\,])$ be the skew-symmetric form on $\q$ defined by $\gamma$. 
 In these terms
\begin{equation}\label{P-br}
\{F_1,F_2\}(\gamma)=\gamma([\textsl{d}_\gamma F_1,\textsl{d}_\gamma F_2]) 
=\hat\gamma(\textsl{d}_\gamma F_1,\textsl{d}_\gamma F_2)
\end{equation}
for all $F_1,F_2\in\gS(\q)$. For a subalgebra $\gA\subset\gS(\q)$, set 
$\textsl{d}_\gamma\gA=\left<\textsl{d}_\gamma F\mid F\in\gA\right>_{\bbk}$. Suppose that 
$\gA$ is {\it Poisson-commutative}, i.e., $\{\gA,\gA\}=0$. Then 
$\hat\gamma$ vanishes on $\textsl{d}_\gamma\gA$ for each 
$\gamma\in\q^*$. Clearly $\ker\hat\gamma=\q^\gamma$. Hence  
$\dim \textsl{d}_\gamma\gA \le \dim\q^\gamma+\frac{1}{2}\dim(Q\gamma)$ and 
\beq    \label{tr}
    \trdeg \gA\le \bb(\q).
\eeq
Poisson-commutative subalgebras $\gA$ with $\trdeg\gA=\bb(\q)$ are of particular importance. 

Let $\psi_\gamma\!: T^*_\gamma \q^* \to T^*_\gamma(Q\gamma)$ be the canonical projection. 
Then $\ker\psi_\gamma=\gt q^\gamma$. The skew-symmetric form $\hat\gamma$ is non-degenerate on 
$T^*_\gamma(Q\gamma)$. The algebra $\bbk[Q\gamma]$ carries the Poisson structure, which is defined  
by \eqref{P-br} with $F_1,F_2\in\bbk[Q\gamma]$ and which is inherited from $\q^*$.
Once again, $\{F_1|_{Q\gamma},F_2|_{Q\gamma}\}=\{F_1,F_2\}|_{Q\gamma}$ for all $F_1,F_2\in\gS(\q)$.
The coadjoint orbit $Q\gamma$ is a smooth symplectic variety. 

\begin{df}     \label{com-fam}
A set $\{F_1,\ldots,F_m\}\subset \bbk[Q\gamma]$ is said to be {\it a complete family in involution} if 
$F_1,\ldots,F_m$ are algebraically independent, $\{F_i,F_j\}=0$ for all $i,j$, and 
$m=\frac{1}{2}\dim (Q\gamma)$. 
\end{df}

Let $\gA\subset \gS(\q)$ be a Poisson-commutative subalgebra. Then the restriction of $\gA$ to 
$Q\gamma$, denoted $\gA|_{Q\gamma}$,  
is Poisson-commutative for every $\gamma$.  We say that $\gA$ is {\it complete on\/} $Q\gamma$, if 
$\gA|_{Q\gamma}$ contains a complete family in involution. 
The condition is equivalent to the equality $\trdeg (\gA|_{Q\gamma}) = \frac{1}{2}\dim (Q\gamma)$. 

\begin{lm}      \label{obvious}
Suppose that $\gA\subset \gS(\q)$ is Poisson-commutative, $\gamma\in\q^*_{\sf reg}$, and 
$\dim\textsl{d}_\gamma \gA=\bb(\q)$. Then $\gA$ is complete on $Q\gamma$.
\end{lm}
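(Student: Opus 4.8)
The plan is to unwind the definitions and exploit the fact that at a regular point the coadjoint orbit carries the maximal possible dimension, so the bound in \eqref{tr} is attained on the nose. First I would recall that completeness of $\gA$ on $Q\gamma$ is equivalent to $\trdeg(\gA|_{Q\gamma}) = \frac12\dim(Q\gamma)$, and that, since $\gA$ is Poisson-commutative, the restriction $\gA|_{Q\gamma}$ is Poisson-commutative on the symplectic variety $Q\gamma$, whence $\trdeg(\gA|_{Q\gamma}) \le \frac12\dim(Q\gamma)$ automatically. So the whole content is the reverse inequality, and for that it suffices to produce enough algebraically independent restrictions.

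Next I would translate the hypothesis $\dim \textsl{d}_\gamma\gA = \bb(\q)$ into a statement about the tangent space of the orbit. Because $\gamma$ is regular, $\dim\q^\gamma = \ind\q$ and $\dim(Q\gamma) = \dim\q - \ind\q$, so $\bb(\q) = \dim\q^\gamma + \frac12\dim(Q\gamma)$. Under the canonical projection $\psi_\gamma: T^*_\gamma\q^* \to T^*_\gamma(Q\gamma)$ with kernel $\q^\gamma$, the image $\psi_\gamma(\textsl{d}_\gamma\gA)$ has dimension at least $\dim\textsl{d}_\gamma\gA - \dim\q^\gamma = \bb(\q) - \ind\q = \frac12\dim(Q\gamma)$. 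On the other hand $\psi_\gamma(\textsl{d}_\gamma\gA)$ is an isotropic subspace for the non-degenerate form $\hat\gamma$ on $T^*_\gamma(Q\gamma)$ (isotropy coming from Poisson-commutativity of $\gA$ via \eqref{P-br}), hence has dimension at most $\frac12\dim(Q\gamma)$. Therefore $\dim\psi_\gamma(\textsl{d}_\gamma\gA) = \frac12\dim(Q\gamma)$ exactly, i.e.\ $\textsl{d}_\gamma\gA$ surjects onto a Lagrangian subspace of $T^*_\gamma(Q\gamma)$.

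Finally I would pass from differentials to functions. For $F \in \gA$, the differential $\textsl{d}_\gamma(F|_{Q\gamma})$ is precisely $\psi_\gamma(\textsl{d}_\gamma F)$, using the identity $\{F_1|_{Q\gamma},F_2|_{Q\gamma}\} = \{F_1,F_2\}|_{Q\gamma}$ recorded in the excerpt and the fact that restriction commutes with taking differentials along the submanifold. Hence $\textsl{d}_\gamma(\gA|_{Q\gamma}) = \psi_\gamma(\textsl{d}_\gamma\gA)$ has dimension $\frac12\dim(Q\gamma)$. Since the dimension of the span of differentials at a point is a lower bound for $\trdeg$, we get $\trdeg(\gA|_{Q\gamma}) \ge \frac12\dim(Q\gamma)$, and combined with the reverse inequality this forces equality; one may then extract $\frac12\dim(Q\gamma)$ algebraically independent elements of $\gA|_{Q\gamma}$ whose differentials at $\gamma$ are linearly independent, which is the required complete family in involution.

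The only mildly delicate point — and the step I would be most careful about — is the identification $\textsl{d}_\gamma(F|_{Q\gamma}) = \psi_\gamma(\textsl{d}_\gamma F)$, i.e.\ that the canonical projection $T^*_\gamma\q^* \to T^*_\gamma(Q\gamma)$ really is dual to the inclusion $T_\gamma(Q\gamma) \hookrightarrow T_\gamma\q^*$ and that its kernel is $\q^\gamma$ (which the excerpt already states). Everything else is a dimension count assembling the chain of (in)equalities $\frac12\dim(Q\gamma) \ge \trdeg(\gA|_{Q\gamma}) \ge \dim\textsl{d}_\gamma(\gA|_{Q\gamma}) = \dim\psi_\gamma(\textsl{d}_\gamma\gA) \ge \bb(\q) - \ind\q = \frac12\dim(Q\gamma)$, forcing all of them to be equalities.
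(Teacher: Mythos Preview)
Your proof is correct and follows essentially the same approach as the paper's: the paper's proof consists of the single observation that $\dim\ker\psi_\gamma=\ind\q$ for regular $\gamma$, hence $\dim\psi_\gamma(\textsl{d}_\gamma\gA)\ge\bb(\q)-\ind\q=\frac12\dim(Q\gamma)$, and declares this ``as required''. You spell out the auxiliary facts the paper leaves implicit (the identification $\textsl{d}_\gamma(F|_{Q\gamma})=\psi_\gamma(\textsl{d}_\gamma F)$, the passage from differential dimension to transcendence degree, and the reverse inequality via isotropy), but the core argument is identical.
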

\begin{proof}
Since $\gamma$ is regular, we have $\dim\ker\psi_\gamma=\ind\q$. 
Therefore 
\[
    \dim \psi_\gamma(\textsl{d}_\gamma \gA) \ge \bb(\q)-\ind\gt q=\frac{1}{2}\dim (Q\gamma)
\]
as required.
\end{proof}

The celebrated ``argument shift method'', which goes back to 
Mishchenko--Fomenko \cite{mf}, provides a large Poisson-commutative subalgebras of $\gS(\q)$ 
starting from the Poisson centre $\gS(\q)^\q$. Given $\gamma\in\q^*$, the $\gamma$-shift of argument 
produces the {\it Mishchenko--Fomenko subalgebra\/} ${\eus F}_\gamma$. Namely, for 
$F\in\gS(\q)=\bbk[\q^*]$, let $\partial_{\gamma} F$ be the direction derivative  of $F$ 
with respect to $\gamma$, i.e., 
\[
    \partial_{\gamma}F(x)=\frac{\textsl{d}}{\textsl{d}t} F(x+t\gamma)\Big|_{t=0}.
\]
Then ${\eus F}_\gamma$ is generated by all $\partial_\gamma^k F$ with $k\ge 0$ and 
$F\in\gS(\q)^{\q}$. The core of this method is that for any $\gamma\in\q^*$ 
there is the Poisson bracket $\{\,\,,\,\}_{\gamma}$ on $\q^*$ such that 
$\{\xi,\eta\}_{\gamma}=\gamma([\xi,\eta])$ for $\xi,\eta\in\q$, and that this new 
bracket is {\it compatible} with $\{\ ,\ \}$. 
Two Poisson brackets on $\gS(\q)$ are said to be {\it compatible}, if all their linear combinations are again 
Poisson brackets.  For more details see \cite[Sect.~1.8.3]{duzu}.

\subsection{Compatible brackets and pencils of skew-symmetric forms}
Take $\gamma\in\q^*$ and let $\eus  F_\gamma$ be the corresponding MF-subalgebra of $\gS(\q)$. 
The original description of $\eus F_\gamma$~\cite{mf} was different  from (but equivalent to) the one 
presented above. 
For $F\in\gS(\q)$ and $t\in\bbk$, let  $F_{\gamma,t}$ be a function on $\q^*$ such that 
$F_{\gamma,t}(x)=F(x+t\gamma)$ for each $x\in\q^*$. Suppose that $\deg F=m$. Then
$F_{\gamma,t}$ expands as a polynomial in $t$ as
\begin{equation}\label{eq-shift}
F_{\gamma,t}
=F^{(0)}+ t F^{(1)}  +\ldots+ t^m F^{(m)} ,
\end{equation}
where   $F^{(k)} = \frac{1}{k!}\partial^k_\gamma F$. As we have stated above, $\eus F_{\gamma}$
is generated by all elements $F^{(k)}$ associated with all  $F\in \gS(\q)^{\q}$.
A standard argument with the Vandermonde determinant  shows that
$\eus F_\gamma$ is generated by $F_{\gamma, t}$ with $F\in\gS(\q)^{\q}$ and 
$t\in\bbk$. It is also clear that if $\gS(\q)^\q$ is generated by $F_1,\dots,F_n$, then $\eus F_\gamma$
is generated by $F_i^{(k)}$ with $i=1,\dots,n$ and all $k$.

Consider the map $\varphi_t\!: \q^* \to \q^*$ such that $\varphi_t (x) = x - t \gamma$ for 
$x\in\q^*$. It extends in the usual way to $\bbk[\q^*]$ and then  
 $F_{\gamma,t}=\varphi_t(F)$. 
The map $\varphi_t$ defines a new Poisson bracket on $\q^*$ by the formula 
\[
      \{F_1,F_2\}_t(x)=\{\varphi_t(F_1),\varphi_t(F_2)\}(\varphi_t(x)),
\]
where  $F_1,F_2\in\bbk[\q^*]$.
For $\xi,\eta\in\q$, the formula reeds 
\[
    \{\xi,\eta\}_t = \vp_t^{-1} (\{\vp_t(\xi),\vp_t(\eta)\}) = [\xi,\eta]-t \hat\gamma(\xi,\eta). 
\]
The Poisson algebras $(\gS(\q),\{\,\,,\,\})$ and $(\gS(\q),\{\,\,,\,\}_t)$ are isomorphic. 
The MF-subalgebra $\eus F_\gamma$ is generated by
$\vp_t^{-1}(\gS(\g)^{\g})$ ($t\in\bbk$), i.e., by  the Poisson centres 
of $(\gS(\q),\{\,\,,\,\}_t)$ with $t\in\bbk$. For $F\in\gS(\g)^{\g}$, we have
\[
        \{F_{\gamma,t}, F_{\gamma,s}\}_t=0=\{F_{\gamma,t}, F_{\gamma,s}\}_s
\] 
and therefore $\{F_{\gamma,t}, F_{\gamma,s}\}=0$ if $t\ne s$. Using the continuity, one concludes that 
$\eus F_\gamma$ is Poisson-commutative.  

Suppose that we wish to calculate $\dim \textsl{d}_x \eus F_\gamma$. The differential
$\textsl{d}_x F_{\gamma,t}=\textsl{d}_{x+t\gamma} F$ lies in the kernel of the skew-symmetric form 
$\hat x_t = \hat x +t \hat\gamma$ if $F\in\gS(\q)^{\q}$.   
Therefore 
\begin{equation}\label{dif-alg}
\textsl{d}_x \eus F_\gamma \subset \sum_{t\in \bbk} \ker (\hat x+t\hat\gamma).
\end{equation} 

We consider below the following conditions on $\q, x,\gamma$:
\beq  \label{eq-cond}
\begin{cases}  (1) & \trdeg \gS(\q)^{\q}=\ind\q,  \\
(2) & (x+\bbk \gamma) \cap \q^*_{\sf reg}\ne\varnothing \\
(3) & \text{there is at least one }  \lb\in\bbk  \text{ such that } \textsl{d}_{y} (\gS(\q)^{\q})=\gt q^{y} 
  \text{ for } y=x+\lb \gamma.
 \end{cases} 
\eeq
Note that  (3) implies (1) and (2). There are tricks that allow one to lift (1), but we are not going to consider them.  Condition (2) is quite harmless, it is satisfied if $\gamma\in\q^*_{\sf reg}$ or $x\in\q^*_{\sf reg}$.  

\begin{lm}       \label{lm-dif-alg}
Suppose that (3) of \eqref{eq-cond} holds. Then 
$$\textsl{d}_x \eus F_\gamma = \sum_{x+ t\gamma\in \q^*_{\sf reg}} \ker (\hat x+ t\hat\gamma)=:L(x,\gamma).$$
\end{lm}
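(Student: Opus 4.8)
The plan is to prove the two inclusions separately, and the containment $\textsl{d}_x\eus F_\gamma\subseteq L(x,\gamma)$ is essentially already in hand. Indeed, \eqref{dif-alg} gives $\textsl{d}_x\eus F_\gamma\subseteq\sum_{t\in\bbk}\ker(\hat x+t\hat\gamma)$, so I only need to discard the ``bad'' values of $t$. The point is that for all but finitely many $\lb\in\bbk$ the element $y=x+\lb\gamma$ lies in $\q^*_{\sf reg}$ (this is exactly condition (2) of \eqref{eq-cond}, which follows from (3); the regular locus on the line $x+\bbk\gamma$ is open, and it is nonempty by hypothesis, hence cofinite). For a value $t_0$ with $x+t_0\gamma$ \emph{singular}, I claim $\ker(\hat x+t_0\hat\gamma)$ contributes nothing new to the sum over \emph{all} $t$. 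This is the standard ``Kronecker/Jordan pencil'' fact: for a pencil of skew-symmetric forms $\hat x+t\hat\gamma$ on $\q$, if one form in the pencil is regular (minimal kernel dimension), then $\sum_{t\in\bbk}\ker(\hat x+t\hat\gamma)=\sum_{x+t\gamma\in\q^*_{\sf reg}}\ker(\hat x+t\hat\gamma)$. I would cite the relevant statement — this is Bolsinov's lemma on the "subalgebra generated by a pencil", see e.g. \cite[Sect.~1.8.3]{duzu} — rather than reprove it. Combining gives $\textsl{d}_x\eus F_\gamma\subseteq L(x,\gamma)$.

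For the reverse inclusion $L(x,\gamma)\subseteq\textsl{d}_x\eus F_\gamma$, fix $t$ with $y:=x+t\gamma\in\q^*_{\sf reg}$; I must show $\ker(\hat x+t\hat\gamma)=\q^y\subseteq\textsl{d}_x\eus F_\gamma$. By hypothesis (3) there is a distinguished $\lb$ with $\textsl{d}_{x+\lb\gamma}(\gS(\q)^\q)=\q^{x+\lb\gamma}$, and since $\gamma\in\q^*_{\sf reg}$ may be assumed (or, more precisely, since the regular locus is cofinite on the line), the generic behaviour is available. The key is that $\gS(\q)^\q$ is generated by finitely many homogeneous $F_1,\dots,F_n$ with $\trdeg\gS(\q)^\q=\ind\q$ by (1); then $\eus F_\gamma$ is generated by all $F_i^{(k)}=\tfrac1{k!}\partial_\gamma^k F_i$, equivalently (Vandermonde) by all $(F_i)_{\gamma,s}=\vp_s(F_i)$ for $s\in\bbk$. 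Now $\textsl{d}_x(F_i)_{\gamma,s}=\textsl{d}_{x+s\gamma}F_i$, and as $F_i$ ranges over the generators of $\gS(\q)^\q$, the span $\langle\textsl{d}_{x+s\gamma}F_i\rangle$ equals $\textsl{d}_{x+s\gamma}(\gS(\q)^\q)$. Taking $s=t$: if $\textsl{d}_{x+t\gamma}(\gS(\q)^\q)=\q^{x+t\gamma}$ we are immediately done for that value. The subtlety is that (3) only guarantees this \emph{codimension-zero} (i.e. $\textsl{d}=\q^y$) behaviour at $y=x+\lb\gamma$, not at every regular point of the line.

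So the main obstacle — and the crux of the argument — is to propagate the equality $\textsl{d}_y(\gS(\q)^\q)=\q^y$ from the single good point $y=x+\lb\gamma$ to \emph{all} points $x+t\gamma\in\q^*_{\sf reg}$ on the line. The mechanism is again the pencil structure: for $F\in\gS(\q)^\q$ homogeneous of degree $m$, the differential $\textsl{d}_{x+s\gamma}F=\sum_k s^k\,\textsl{d}_x F^{(k)}$ is polynomial in $s$ (of degree $<m$), so $s\mapsto\textsl{d}_{x+s\gamma}(\gS(\q)^\q)$, viewed inside the subspace $\textsl{d}_x\eus F_\gamma$, is "polynomially parametrised". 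One knows $\dim\textsl{d}_{x+s\gamma}(\gS(\q)^\q)\le\dim\q^{x+s\gamma}=\ind\q$ for $s$ generic, with equality at $s=\lb$; on the other hand $\q^{x+s\gamma}=\ker(\hat x+s\hat\gamma)$ and the various kernels for distinct regular $s$ are "in general position" with respect to the pencil. The cleanest route is: (a) show $L(x,\gamma)=\sum_{x+t\gamma\in\q^*_{\sf reg}}\ker(\hat x+t\hat\gamma)$ is a direct sum of the $\q^{x+t\gamma}$ over the (finitely many) distinct regular lines' worth — actually over distinct $t$ the kernels meet only in $\q^x\cap\q^\gamma$, and a dimension count using Bolsinov's estimate $\dim L(x,\gamma)\le\bb(\q)$ pins things down; (b) since $\textsl{d}_x\eus F_\gamma\supseteq\textsl{d}_x(\gS(\q)^\q)$-translates and in particular, by taking the closure/limit $s\to t$ within the finite-dimensional space $\textsl{d}_x\eus F_\gamma$, one gets $\q^{x+t\gamma}\subseteq\textsl{d}_x\eus F_\gamma$ for every regular $t$. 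I would invoke \cite{duzu} (or the Bolsinov/Mishchenko–Fomenko completeness criterion) for the statement that the span of the kernels of a pencil of forms, over the regular members, is exactly the differential of the associated shift subalgebra, which is precisely the content of the lemma; the proof then amounts to checking that hypothesis (3) supplies exactly the input (a regular point on the line plus $\trdeg\gS(\q)^\q=\ind\q$) that this criterion requires. The expected hard point is this last propagation step, i.e. verifying that "one good point forces all regular points", which is where the compatibility of the two brackets and the structure theory of pencils genuinely enter.
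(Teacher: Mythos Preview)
Your proposal has a genuine gap in each direction.

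For the inclusion $\textsl{d}_x\eus F_\gamma\subseteq L(x,\gamma)$, your claimed ``standard fact'' is false: it is \emph{not} true that $\sum_{t\in\bbk}\ker(\hat x+t\hat\gamma)=L(x,\gamma)$ whenever one form in the pencil is regular. In the Jordan--Kronecker decomposition (Proposition~\ref{prop-JK-c}), the space $L(x,\gamma)$ lives entirely in the Kronecker part, whereas a singular value $t_0$ produces a kernel with a nonzero component in the corresponding Jordan block; that component lies outside $L(x,\gamma)$. So you cannot discard singular $t$ by a pencil argument alone. The paper's route is different and correct: it does not claim $\ker(\hat x+t_0\hat\gamma)\subset L(x,\gamma)$, only that the \emph{differentials} $\textsl{d}_x F_{\gamma,t_0}$ (a proper subspace of that kernel) lie in $L(x,\gamma)$, and this follows by continuity, since $\textsl{d}_x F_{\gamma,t_0}=\lim_{t\to t_0}\textsl{d}_x F_{\gamma,t}$ with $\textsl{d}_x F_{\gamma,t}\in L(x,\gamma)$ for regular $t$ and $L(x,\gamma)$ closed.

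For the reverse inclusion, you correctly identify the obstacle --- propagating $\textsl{d}_{y}(\gS(\q)^\q)=\q^{y}$ from the single point $y=x+\lb\gamma$ to other regular points on the line --- but your resolution is circular (you end up invoking a criterion equivalent to the lemma) and the limit step ``$s\to t$ gives $\q^{x+t\gamma}\subseteq\textsl{d}_x\eus F_\gamma$'' does not work: limits of $\ind\q$-dimensional subspaces can drop dimension. The paper handles this in two clean moves. First, semicontinuity: the rank of the Jacobian of the invariants is lower semicontinuous, and condition~(3) forces it to equal $\ind\q$ at $x+\lb\gamma$, hence on a nonempty open subset $Y$ of the line; together with $\textsl{d}_y(\gS(\q)^\q)\subset\q^y$ and $\dim\q^y=\ind\q$ for regular $y$, this gives $\textsl{d}_y(\gS(\q)^\q)=\q^y$ for all $y\in Y$, so $\sum_{y\in Y}\q^y\subset\textsl{d}_x\eus F_\gamma$. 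Second, the paper cites \cite[Lemma~A.1]{mrl}, which says precisely that $\sum_{y\in Y}\q^y=L(x,\gamma)$ for any such cofinite $Y$ of regular values --- this is the pencil lemma you were reaching for, but applied to regular values only (removing finitely many regular $t$ does not shrink the sum), not to singular ones.
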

\begin{proof}
Condition (3) implies that there is a non-empty open subset $Y\subset (x+\bbk \gamma)$ such that 
$\textsl{d}_{y} (\gS(\q)^{\q})=\gt q^{y}=\ker\hat y$   for all $y\in Y$.  
Thus, $\sum_{y\in Y} \gt q^y \subset \textsl{d}_x \eus F_\gamma$.  

For almost all $t\in\bbk$, we have $x+t\gamma\in\q^*_{\sf reg}$.  
If $x'=x+t_0\gamma\in\q^*_{\sf sing}$, then nevertheless 
$\textsl{d}_{x'} F_{\gamma,t_0}=\lim_{t\to t_0} \textsl{d}_x F_{\gamma,t}$,
where we can assume that $x+t\gamma\in\q^*_{\sf reg}$. Here  $\textsl{d}_x F_{\gamma,t}\in L(x,\gamma)$ and hence $\textsl{d}_{x'} F_{\gamma,t_0}\in L(x,\gamma)$ as well. 
Now we have 
\[
   \sum_{y\in Y} \gt q^y \subset \textsl{d}_x \eus F_\gamma \subset L(x,\gamma).
\]
According to  \cite[Lemma~A.1]{mrl}, $\sum_{y\in Y} \gt q^y=L(x,\gamma)$.
This concludes the proof.  
\end{proof}

Assume also that $\hat x$ and $\hat \gamma$ are not proportional. 
Now the problem  is to deal with the pencil of skew-symmetric forms on $\q$ generated by 
$\hat x$ and $\hat\gamma$. 

Let $\eus P$ be a two-dimensional vector space of (possibly degenerate) skew-symmetric bilinear
forms on a finite-dimensional vector space $V$. Set $m=\max_{A\in \eus P }\rk A$, and let 
$\eus P_{\sf reg}\subset \eus P$ be the set of all forms of rank $m$. Then $\eus P_{\sf reg}$ is a conical
open subset of $\eus P$.
For each $A\in \eus P$, let $\ker A\subset V$ be the kernel of $A$. 
Our object of interest is the subspace 
$L:=\sum_{A\in \eus P_{\sf reg}} \ker A$.  

\begin{prop}[see {\cite[Thm 1(d)]{JK}}]      \label{prop-JK-c}
Take non-proportional $A,B\in  \eus P_{\sf reg}$. Then 
there is the so-called {\it Jordan--Kronecker canonical form\/} of $A$ and $B$. 
Namely, $V=V_1\oplus\ldots\oplus V_d$, where $A(V_i,V_j)=0=B(V_i,V_j)$ for $i\ne j$, and 
accordingly, $A=\sum A_i$ and $B=\sum B_i$. There are two possibilities for 
$(A_i,B_i)$, one obtains either a {\it Kronecker} or a {\it Jordan block\/} here, see figures below. 
Assume that $\dim V_i>0$ for each $i$. 

\vskip-10pt
\[
\hskip-30pt
\begin{array}{lcc}
  &  A_i  &  B_i     \\  & & \\
\begin{array}{c} \text{A Jordan block } \\ (\lambda_i\in \bbk^\times) \end{array}: &
\begin{pmatrix}
     &   \!\!\! \eus J(\lambda_i) \\  
   \!  -\eus J^\top(\lambda_i) &
 \end{pmatrix} &
 \begin{pmatrix}
   &  -\mathrm{I}  \\  
   \mathrm{I} &
 \end{pmatrix} , \\     & &
 \\

\begin{array}{c} \text{a Kronecker} \\
\text{block} \end{array}: &
 \!\!\!\!\!\!\!\!\!\!\!\! \begin{pmatrix}
     &
\hskip-10pt \boxed{ \begin{matrix}
  1 & 0  & & \\
      &\ddots & \ddots & \\
     &           &  1 & 0
     \end{matrix}}
     \\
\boxed{\begin{matrix}
\!\!\! -1  &             &   \\
  0 & \ddots &   \\
     & \ddots &\!\!\! -1 \\
     &             &0
  \end{matrix}
  }\end{pmatrix}
 &
  \begin{pmatrix}
     &
\hskip-10pt \boxed{ \begin{matrix}
  0 & 1  & & \\
      &\ddots & \ddots & \\
     &           &  0 & 1
     \end{matrix}}
     \\
\boxed{\begin{matrix}
 0  &             &   \\
  \!\!\! -1 & \ddots &   \\
     & \ddots &0 \\
     &             &\!\!\! -1
  \end{matrix}
  }\end{pmatrix},
\end{array}
\]
where 
$\eus J(\lambda_i)=\begin{pmatrix}
\lambda_i & 1 & &\\
& \lambda_i & \ddots & \\
& & \ddots & 1 \\ & & & \lambda_i
\end{pmatrix}.
$   
\qed
\end{prop}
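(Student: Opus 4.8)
The statement to prove is the Jordan--Kronecker canonical form (Proposition~\ref{prop-JK-c}), attributed to \cite[Thm~1(d)]{JK}. Since this is a classical result, my plan is to give a proof via the theory of matrix pencils, reducing the pair of skew-symmetric forms to a direct sum of indecomposable pieces.

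\medskip

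\textbf{Plan of proof.} The approach is to treat the pair $(A,B)$ as a \emph{pencil} $A-\lambda B$ of skew-symmetric bilinear forms and to invoke the Kronecker theory of matrix pencils, adapted to the skew-symmetric (rather than general bilinear) setting. First I would pass to the linear-algebra formulation: a pair of skew-symmetric forms on $V$ is the same as a single skew-symmetric map $V \to V^*$ depending linearly on $\lambda$, and two such pencils are \emph{equivalent} if they differ by $P^\top(\cdot)P$ for an invertible $P$. The key reduction step is to show that $V$ decomposes, orthogonally with respect to \emph{both} $A$ and $B$ simultaneously, into a direct sum $V = \bigoplus_i V_i$ of indecomposable subpencils; this is a Krull--Schmidt type statement for the category of skew-symmetric matrix pencils, and it follows from the general Kronecker theory once one checks that the indecomposable skew-symmetric pencils are exactly the two families listed (Jordan blocks $\eus J(\lambda_i)$ with $\lambda_i \in \bbk^\times$, and Kronecker blocks of the displayed shape). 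The hypothesis that $A,B \in \eus P_{\sf reg}$ are non-proportional of maximal rank $m$ is what guarantees that no block of the form ``$\ker$ contains a common isotropic direction for the whole pencil'' appears — i.e., it rules out trivial blocks and forces $\lambda_i \neq 0$ — which is why one gets the clean list above.

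\medskip

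\textbf{Key steps, in order.} (i) Reformulate: identify the pencil with a skew-symmetric pencil of matrices and record the equivalence relation $P^\top(\cdot)P$. (ii) Decompose into indecomposables: invoke the classification of matrix pencils (Kronecker), then symmetrise — every indecomposable pencil of a pair of skew-symmetric forms is, up to equivalence, either a Jordan block attached to an eigenvalue $\lambda_i$ of the pencil or a Kronecker (``singular'') block of odd-by-odd-plus-one rectangular type symmetrised into a skew-symmetric square block. This is essentially Thompson's classification of pairs of skew-symmetric forms; I would either cite it or sketch it by pairing up the Jordan blocks of the underlying (non-symmetric) pencil in conjugate pairs. (iii) Identify the eigenvalues: since $A$ and $B$ both have rank $m$, the form $A - \lambda B$ drops rank exactly at the eigenvalues $\lambda_i$, and these are nonzero (if $\lambda_i = 0$ were an eigenvalue with a Jordan block we could reach it by a form of smaller rank, contradicting $B \in \eus P_{\sf reg}$) — hence $\lambda_i \in \bbk^\times$. (iv) Read off the block shapes: in suitable bases each Jordan summand gives the displayed antidiagonal pair $(\eus J(\lambda_i)$ against $\mathrm I)$, and each Kronecker summand gives the displayed shifted-shift pair; this is a direct normalisation computation within each $V_i$.

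\medskip

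\textbf{Main obstacle.} The genuinely delicate point is step (ii): obtaining the \emph{simultaneous} orthogonal decomposition and the list of indecomposables in the skew-symmetric category, rather than just for a single matrix pencil. The subtlety is that the ground field is algebraically closed of characteristic zero (which we are given, so we need not worry about $2$ being invertible or about irreducible-but-not-linear elementary divisors), but one still must verify that Jordan blocks of the associated non-symmetric pencil glue into \emph{conjugate pairs} to form the skew-symmetric Jordan blocks, and that the ``minimal indices'' of the singular part of the pencil force the rectangular Kronecker blocks to have the precise $k \times (k+1)$ shape whose symmetrisation is the displayed $(2k+1)$-dimensional block. In a self-contained write-up this is where the real work lies; given that the paper cites \cite{JK} (and, implicitly, the older references of Gantmacher/Thompson), I would present steps (i)--(iv) as the structure of the argument and defer the internal verification of the indecomposable list to that citation, since reproducing the full Kronecker--Thompson machinery is not the point of this paper. \qed
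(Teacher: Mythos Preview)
Your outline is a reasonable sketch of how the classical Jordan--Kronecker classification for pairs of skew-symmetric forms is obtained, and your step (iii) correctly identifies the reason why $\lambda_i\in\bbk^\times$: regularity of $A$ rules out $\lambda_i=0$ and regularity of $B$ rules out $\lambda_i=\infty$. There is no gap.

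That said, you should be aware that the paper gives \emph{no proof at all} of this proposition. It is stated with a terminal \qed\ and attributed to \cite[Thm~1(d)]{JK}; the only commentary is the brief Remark immediately following, which records exactly the observation you made in step (iii) about excluding $\lambda_i=0$ and $\lambda_i=\infty$. So your proposal is not a different route from the paper's proof --- rather, it is a proof where the paper has none. If your goal is to match the paper, the correct move is simply to cite Thompson \cite{JK} (or Gantmacher) for the decomposition and add one sentence explaining why the regularity of both $A$ and $B$ forces $\lambda_i\in\bbk^\times$, exactly as the paper's Remark does.
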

\noindent
{\bf Remark.} In general, there can occur ``Jordan blocks with $\lb_i=\infty$'', but this is not the case here, since $B\in\eus P_{\sf reg}$. Since $A\in\eus P_{\sf reg}$ as well, the case of $\lambda_i=0$ doesn't occur either. 

\begin{prop}    \label{prop-JK}
{\sf (i)} For each non-zero $C\in\eus P$, we have $\dim (L\cap \ker C)=\dim V-m$. \\[.2ex]
{\sf (ii)} If $\eus P_{\sf reg}=\eus P\setminus\{0\}$, then $\dim L=\dim V-\frac{m}{2}$. \\[.2ex]
{\sf (iii)} Suppose that $C\in\eus P$, $C\ne 0$, and $C\not\in \eus P_{\sf reg}$.
Then $\dim L\le (\dim V-m)+\frac{1}{2}\rk C$ and $\dim L = (\dim V-m)+\frac{1}{2}\rk C$
if and only if $\eus P\setminus\eus P_{\sf reg}=\bbk C$, 
$\rk (A|_{\ker C})=\dim\ker C-\dim V+m$ for $A\in\eus P_{\sf reg}$. 
\end{prop}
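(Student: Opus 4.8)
The plan is to reduce everything to the Jordan--Kronecker normal form of Proposition~\ref{prop-JK-c}. Fix two non-proportional $A,B\in\eus P_{\sf reg}$ (possible, as $\eus P_{\sf reg}$ is a non-empty conical open subset of the plane $\eus P$) and let $V=V_1\oplus\dots\oplus V_d$ be the associated $A,B$-orthogonal decomposition: each $V_i$ is either a Jordan block of even dimension $2\ell_i$ with eigenvalue $\lambda_i\in\bbk^\times$ (the values $0,\infty$ are excluded by the Remark after Proposition~\ref{prop-JK-c}) or a Kronecker block of odd dimension $2k_i+1$. For $C\in\eus P$ write $C_i=C|_{V_i}$, so $\ker C=\bigoplus_i\ker C_i$; for non-zero $C=\alpha A+\beta B$ set $\chi(C)=\beta/\alpha\in\bbk$ if $\alpha\ne0$ and $\chi(C)=\infty$ otherwise, and for $\mu\in\BP^1$ let $C_{[\mu]}\in\eus P$ be a non-zero form with $\chi(C_{[\mu]})=\mu$ (unique up to a scalar). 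On a Jordan block of eigenvalue $\lambda_i$, $C_i$ is degenerate precisely when $\chi(C)=\lambda_i$.

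First I would record the block facts, each a direct matrix computation with the normal forms of Proposition~\ref{prop-JK-c}. \emph{Kronecker block $V_i$} ($\dim=2k_i+1$): every non-zero $C_i$ has rank $2k_i$, so $\ker C_i$ is a line; the lines $\ker C_i$ obtained as $C$ varies span one subspace $L_i\subseteq V_i$ with $\dim L_i=k_i+1$, isotropic for the whole pencil. \emph{Jordan block $V_i$} ($\dim=2\ell_i$, eigenvalue $\lambda_i$): $\rk C_i=2\ell_i$ unless $\chi(C)=\lambda_i$, in which case $\rk C_i=2\ell_i-2$ and $\dim\ker C_i=2$; in particular $\ker C_i=0$ for every $C\in\eus P_{\sf reg}$, and, for any $A'\in\eus P_{\sf reg}$, $\rk(A'|_{\ker C_i})=2$ when $\ell_i=1$ and $\rk(A'|_{\ker C_i})=0$ when $\ell_i\ge2$. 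Summing $\rk A_i$ over the blocks gives $m=\dim V-r$, where $r$ is the number of Kronecker blocks, and identifies $\eus P\setminus\eus P_{\sf reg}$ with $\{0\}$ together with the lines $\bbk C_{[\lambda]}$ over the distinct Jordan eigenvalues $\lambda$.

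Next, the assembly. Since Jordan blocks contribute nothing to $\ker C$ for regular $C$, and cofinitely many slopes are realised by regular forms, $L=\bigoplus_{i\ \mathrm{Kron}}L_i$, whence, using $r=\dim V-m$,
\[
  \dim L=\sum_{i\ \mathrm{Kron}}(k_i+1)=\dim V-\tfrac12(j+m),
\]
$j$ being the total dimension of the Jordan part. Part~(ii) is the case $j=0$. For part~(i), $L\cap\ker C=\bigoplus_{i\ \mathrm{Kron}}(L_i\cap\ker C_i)$; for a Kronecker block and non-zero $C$ one has $\ker C_i\subseteq L_i$, so each summand is the line $\ker C_i$ and $\dim(L\cap\ker C)=r=\dim V-m$. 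For part~(iii), with $C\notin\eus P_{\sf reg}$, let $s\ge1$ be the number of Jordan blocks of eigenvalue $\chi(C)$; then $\rk C=m-2s$, so $(\dim V-m)+\tfrac12\rk C=\dim V-\tfrac m2-s$, and the claimed bound $\dim L\le(\dim V-m)+\tfrac12\rk C$ becomes $j\ge2s$, which holds since each Jordan block has dimension $\ge2$ and there are at least $s$ of them. Equality holds iff $j=2s$, i.e.\ iff all Jordan eigenvalues equal $\chi(C)$ and every Jordan block is two-dimensional. The first condition is precisely $\eus P\setminus\eus P_{\sf reg}=\bbk C$; granting it, $\dim\ker C-\dim V+m=2s$ while $\rk(A|_{\ker C})=2\cdot\#\{\text{two-dimensional Jordan blocks}\}$ by the last block computation, so the second condition becomes $\rk(A|_{\ker C})=\dim\ker C-\dim V+m$. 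This is the stated characterisation.

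The main obstacle is the quantitative block analysis underlying the second paragraph: showing that the kernels inside a Kronecker block $K_{2k+1}$ span precisely a $(k+1)$-dimensional isotropic subspace, and computing the rank of a regular form restricted to the two-dimensional kernel sitting in a Jordan block. Both are elementary but slightly fussy computations with the explicit matrices of Proposition~\ref{prop-JK-c} (and are implicit in Jordan--Kronecker theory, cf.~\cite{JK,mrl}); granting them, parts~(i)--(iii) reduce to bookkeeping with the $A,B$-orthogonal decomposition.
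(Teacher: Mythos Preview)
Your proof is correct and follows essentially the same route as the paper: reduce to the Jordan--Kronecker normal form, observe that $L$ lives entirely in the Kronecker part with $\dim L_i=k_i+1$ on each Kronecker block, and then do bookkeeping. The only cosmetic difference is in part~{\sf (iii)}: the paper writes $\dim L=d'+\tfrac12\sum_{j\le d'}\rk C_j$ and bounds this by $d'+\tfrac12\rk C$, with equality iff $C_j=0$ on every Jordan block, whereas you package the same information as the inequality $j\ge 2s$; both arrive at the same characterisation (all Jordan eigenvalues equal $\chi(C)$ and every Jordan block two-dimensional), and both translate the second condition via the rank of $A$ on $\ker C$ in the same way.
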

\begin{proof}
We choose non-proportional $A,B\in\eus P_{\sf reg}$ and bring them into 
a Jordan--Kronecker form according to Proposition~\ref{prop-JK-c}. 
Keep the above notation. In particular, $V=V_1\oplus\ldots \oplus V_d$.  
For any $C\in\eus P$, we have $C=\sum C_i$ accordingly.   

Note that if $V_i$ gives rise to a Jordan block, then $\dim V_i$ is even and  
both $A_i$ and $B_i$ are non-degenerate on $V_i$. For a Kronecker block, 
$\dim V_i=2k_i+1$, $\rk A_i=2k_i=\rk B_i$ and the same holds for every non-zero linear combination 
of $A_i$ and $B_i$. 

Let us assume that $V_i$ defines a Kronecker block if and only if $1\le i\le d'$. Then necessarily 
$d'=\dim V-m$. 
We have 
$$
   L=\bigoplus_{i=1}^{d'}\sum_{C\in\eus P_{\sf reg}} \ker C_i=: \bigoplus_{i=1}^{d'} L_i.
$$ 
It follows from the matrix form of a Kronecker block that $L_i$ is the linear span of the last $(k+1)$ 
vectors in the basis of $V_i$. Hence $\dim L_i=k_i+ 1$. 
For any non-zero $C\in \eus P$, we have $\ker C\cap L=\bigoplus_{i=1}^{d'} (\ker C\cap L_i)$, also   
$\dim\ker C_i=1$ and $\ker C_i\subset L_i$ for each $i\le d'$. Thereby  $\dim(\ker C\cap L)=d'$. 
Thus,  {\sf (i)} is settled. 

If $\lambda=\lambda_i$ for $\lambda_i$ coming from a Jordan block, then $C=A+\lambda B\not\in\eus P_{\sf reg}$ and $C\ne 0$.  Hence the equality $\eus P\setminus\{0\}=\eus P_{\sf reg}$ takes place if and only if there are no Jordan blocks. In this case $\dim L=(\dim V+d)/2$. Part  {\sf (ii)} is settled as well.

{\sf (iii)} By the assumptions on $C$, up to a non-zero scalar factor $C=A+\lambda_i B$,
where  $\lambda_i$ comes from a Jordan block. 
We have $\dim L=d' + \frac{1}{2}\sum_{j=1}^{d'} \rk C_j$. Clearly 
$\sum_{j=1}^{d'} \rk C_j \le \rk C$. The equality takes place if and only if $C_j=0$ for $j>d'$. 
Further, $C_j=0$ if and only if $\lambda_j=\lambda_i$ and $\dim V_j=2$. The first condition, 
$\lambda_i=\lambda_j$, is satisfied if and only if $\eus P\setminus \eus P_{\sf reg}=\bbk C$.  
Until the end of the proof assume that $\lambda_i=\lambda_j$ for all $j>d'$. 

Set $U=\ker C$.
Note that $A$ and $C$ generate $\eus P$. Therefore 
$\rk(A'|_{U})=\rk(A|_{U})$ for every $A'\in\eus P_{\sf reg}$. Recall that $\dim\ker C_j=1$ if $j\le d'$. 
Since $U=\bigoplus_{j=1}^d \ker C_j$ and the spaces $\{\ker C_j\}$ are pairwise orthogonal w.r.t. any
form in $\eus P$, we have $A(\ker C_j,U)=0$ for $j\le d'$. Hence the condition 
$\rk(A|_U)=\dim U-\dim V+ m$ implies that $A_j$ is non-degenerate on 
$\ker C_j$ for any $j>d'$. The explicit matrix form of a Jordan block shows that $\ker C_j$ is 
spanned by two middle basis vectors of $V_j$. Therefore, $A_j$ is non-degenerate on 
$\ker C_j$ if and only if  $\dim V_j=2$. 
This completes the proof. 
\end{proof}

\begin{cl}             \label{cl-MF} 
Suppose that (3) of \eqref{eq-cond} holds for $x$ and $\gamma$. 
Then $\dim(\textsl{d}_x \eus F_\gamma \cap \gt q^x)=\ind\q$ and  
$\dim \textsl{d}_x \eus F_\gamma \le \ind\gt q+\frac{1}{2}\dim(Qx)$. 
Assume additionally that $\hat x$ and $\hat\gamma$ are non-proportional.  
Then 
\[
    \dim \textsl{d}_x \eus F_\gamma = \ind\gt q+\frac{1}{2}\dim(Qx)
\]     
if and only if $(\bbk x  \oplus \bbk \gamma) \cap \q^*_{\sf sing} \subset \bbk x$ and 
$\dim (\gt q^x)^{\bar\gamma}=\ind\q$ for the restriction $\bar\gamma=\gamma|_{\gt q^x}$. 
\end{cl}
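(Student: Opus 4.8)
The strategy is to read off $\textsl{d}_x\eus F_\gamma$ from Lemma~\ref{lm-dif-alg} and then feed the pencil $\eus P$ of skew-symmetric forms on $\q$ spanned by $\hat x$ and $\hat\gamma$ into Proposition~\ref{prop-JK}. First dispose of the case where $\hat x,\hat\gamma$ are proportional: then $s\hat x+t\hat\gamma$ is a scalar multiple of $\hat x$ for all $s,t$, so condition~(2) of~\eqref{eq-cond} (implied by~(3)) forces $x\in\q^*_{\sf reg}$ and $\ker(\hat x+t\hat\gamma)=\q^x$ whenever $x+t\gamma\in\q^*_{\sf reg}$; hence $\textsl{d}_x\eus F_\gamma=\q^x$ by Lemma~\ref{lm-dif-alg}, and the two unconditional assertions follow. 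So assume from now on that $\hat x,\hat\gamma$ are linearly independent; in particular $\hat x\ne0$ and $\dim\eus P=2$. Each element of $\eus P$ is $s\hat x+t\hat\gamma=\widehat{sx+t\gamma}$ for $sx+t\gamma\in\q^*$, hence of rank $\le\dim\q-\ind\q$, and this maximum is attained since $\eus P\ni\hat y$ with $y=x+\lambda\gamma\in\q^*_{\sf reg}$ by~(2); thus $m=\dim\q-\ind\q$ in the notation of Proposition~\ref{prop-JK}, so $\dim\q-m=\ind\q$. Since $\widehat w\in\eus P_{\sf reg}$ iff $w\in\q^*_{\sf reg}$, Lemma~\ref{lm-dif-alg} identifies $\textsl{d}_x\eus F_\gamma=L(x,\gamma)$ with the space $L=\sum_{A\in\eus P_{\sf reg}}\ker A$ of Proposition~\ref{prop-JK} --- the only point needing a word being that, when $\gamma$ itself is regular, $\q^\gamma=\ker\hat\gamma$ still lies in $L(x,\gamma)$, which follows by letting $t\to\infty$ in $\ker(\hat x+t\hat\gamma)=\ker(t^{-1}\hat x+\hat\gamma)$ (kernels of constant rank $m$, so the limit is $\ker\hat\gamma$). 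Now Proposition~\ref{prop-JK}(i) with $C=\hat x$ gives $\dim(\textsl{d}_x\eus F_\gamma\cap\q^x)=\dim(L\cap\ker\hat x)=\dim\q-m=\ind\q$, the first assertion; and $\dim\textsl{d}_x\eus F_\gamma\le\ind\q+\frac{1}{2}\dim(Qx)$ follows from Proposition~\ref{prop-JK}(iii) applied to $C=\hat x$ when $x\in\q^*_{\sf sing}$ (so $\rk\hat x=\dim(Qx)$), and from the general estimate $\dim\textsl{d}_x\eus F_\gamma\le\dim\q^x+\frac{1}{2}\dim(Qx)$ preceding~\eqref{tr} when $x\in\q^*_{\sf reg}$ (where $\dim\q^x=\ind\q$).

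For the equality $\dim\textsl{d}_x\eus F_\gamma=\ind\q+\frac{1}{2}\dim(Qx)$, assume first $x\in\q^*_{\sf sing}$. By Proposition~\ref{prop-JK}(iii) with $C=\hat x$ it holds iff (a) $\eus P\setminus\eus P_{\sf reg}=\bbk\hat x$ and (b) $\rk(A|_{\ker\hat x})=\dim\ker\hat x-\dim\q+m\ (=\dim\q^x-\ind\q)$ for $A\in\eus P_{\sf reg}$. Under the bijection $\widehat{sx+t\gamma}\leftrightarrow sx+t\gamma$, a nonzero form of $\eus P$ lies outside $\eus P_{\sf reg}$ exactly when the corresponding element of $\bbk x\oplus\bbk\gamma$ is singular (and the zero form corresponds to $0\in\q^*_{\sf sing}$), so (a) becomes $(\bbk x\oplus\bbk\gamma)\cap\q^*_{\sf sing}=\bbk x$; the inclusion $\supseteq$ being automatic here (as $x$, hence $\bbk x$, is singular), (a) is equivalent to $(\bbk x\oplus\bbk\gamma)\cap\q^*_{\sf sing}\subseteq\bbk x$. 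For (b), take $t\ne0$ with $x+t\gamma\in\q^*_{\sf reg}$ and $A=\hat x+t\hat\gamma$ (the choice of $A\in\eus P_{\sf reg}$ being immaterial, by the proof of Proposition~\ref{prop-JK}(iii)); since $\hat x$ vanishes on $\ker\hat x=\q^x$, one has $A|_{\q^x}=t\,\hat\gamma|_{\q^x}$, and as $(\q^x)^{\bar\gamma}=\ker(\hat\gamma|_{\q^x})$ this gives $\rk(A|_{\q^x})=\dim\q^x-\dim(\q^x)^{\bar\gamma}$; thus (b) reads $\dim(\q^x)^{\bar\gamma}=\ind\q$. This settles the singular case.

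Finally let $x\in\q^*_{\sf reg}$, so $\dim(Qx)=\dim\q-\ind\q$ and the desired equality is $\dim\textsl{d}_x\eus F_\gamma=\bb(\q)$. From the Jordan--Kronecker decomposition in the proof of Proposition~\ref{prop-JK} one has $\dim L=\frac{1}{2}(\dim W+\ind\q)$, where $W$ is the span of the Kronecker blocks; hence $\dim L\le\bb(\q)$, with equality iff $W=\q$, iff there are no Jordan blocks, iff every nonzero form of $\eus P$ is regular, iff $(\bbk x\oplus\bbk\gamma)\cap\q^*_{\sf sing}=\{0\}$ --- which is exactly the condition $(\bbk x\oplus\bbk\gamma)\cap\q^*_{\sf sing}\subseteq\bbk x$ in this case. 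When it holds, each $\ker(\hat x)_i$ is one-dimensional, so the skew form $\hat\gamma$ vanishes on it and hence on $\q^x=\bigoplus_i\ker(\hat x)_i$, whence $\bar\gamma=0$ and $\dim(\q^x)^{\bar\gamma}=\dim\q^x=\ind\q$ automatically; so the second condition is redundant here and the two displayed conditions again characterise equality. The main work --- and the place where care is required --- is this last bookkeeping step: translating the intrinsic pencil conditions of Proposition~\ref{prop-JK}(iii) into the stated conditions on $\q^*_{\sf sing}$ and $\bar\gamma$, and running the regular-$x$ case in parallel, since Proposition~\ref{prop-JK}(iii) itself needs $C\notin\eus P_{\sf reg}$.
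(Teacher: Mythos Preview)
Your proof is correct and follows essentially the same route as the paper's: both reduce to Lemma~\ref{lm-dif-alg} plus Proposition~\ref{prop-JK}, translating the pencil conditions into the stated conditions on $\q^*_{\sf sing}$ and $\bar\gamma$. The only noteworthy difference is in the regular-$x$ case: the paper disposes of the condition $\dim(\q^x)^{\bar\gamma}=\ind\q$ by citing that $\q^x$ is Abelian for regular $x$ (so the condition holds for \emph{every} $\gamma$), whereas you deduce $\hat\gamma|_{\q^x}=0$ from the orthogonality of the Kronecker blocks --- a self-contained alternative that avoids the external reference.
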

\begin{proof}
Consider first the case, where  $\dim(\bbk \hat x+\bbk\hat\gamma)\le 1$. Suppose that (3) holds for 
$y\in x+t\gamma$.
Then $\textsl{d}_x \eus F_\gamma= \textsl{d}_y (\gS(\q)^{\q})=\gt q^y$.  
Here $y$ is necessary regular and $\dim\gt q^y=\ind\q$. 

Suppose now that $\hat x$ and $\hat\gamma$ are non-proportional. 
By Lemma~\ref{lm-dif-alg}, $\textsl{d}_x \eus F_\gamma=L(x,\gamma)$, where    
$L(x,\gamma)=\sum_{\hat y\in\eus P_{\sf reg}} \ker\hat y$ for $\eus P=\bbk\hat x\oplus\bbk\hat\gamma$. 
According to  Proposition~\ref{prop-JK},  we have 
$$
\dim(\ker\hat x\cap L(x,\gamma))=\ind\gt q
\ \text{ and } \ 
\dim L(x,\gamma) \le \ind\gt q+\frac{1}{2}\dim(Qx).
$$
 By the same proposition, the inequality turns into equality  if and only if $\eus P\setminus\eus P_{\sf reg}\subset \bbk\hat x$ and  
$\dim(\gt q^x)^{\bar\gamma}=\ind\q$ in case $x\in\q^*_{\sf sing}$. Note that 
$\gt q^y$ is Abelian for any $y\in\q^*_{\sf reg}$, see e.g. \cite[Sect.~1]{p03}, and therefore 
$\ind\gt q^y=\dim\gt q^y=\ind\q$, 
$(\gt q^y)^*_{\sf reg}=(\gt q^y)^*$ in this case. 
\end{proof}

\begin{rmk}
{\sf (i)} An idea how to estimate $\trdeg (\eus F_\gamma\vert_{Qx})$ appeared in \cite{bols}, see also \cite{bol-TG}, especially for the use of Jordan--Kronecker blocks. \\[.2ex]
{\sf (ii)} The Poisson-commutativity of $\eus F_\gamma$ can be shown using pencils of 
skew-symmetric forms. 
The equality $\{\eus F_\gamma,\eus F_\gamma\}=0$ holds if and only if 
$\hat x(\textsl{d}_x \eus F_\gamma,\textsl{d}_x\eus F_\gamma)=0$ for generic $x\in \q^*$.
In case $\gamma=0$, we have $\eus F_0=\gS(\q)^{\q}$ and there is nothing to prove.   
Suppose that $x\in\q^*_{\sf reg}$ and that $\hat\gamma$ and $\hat x$ are non-proportional.
By the same continuity principle, which has been used in the proof of Lemma~\ref{lm-dif-alg}, 
$\textsl{d}_x \eus F_\gamma\subset L(x,\gamma)$. Suppose that $\xi\in\ker (\hat x+\lambda \hat\gamma)\subset L(x,\gamma)$. Making use of \cite[Lemma~A.1]{mrl}, one writes
\[
   L(x,\gamma)=\sum_{x+ t\gamma\q^*_{\sf reg},\,t\ne \lambda} \ker(\hat x+t\hat\gamma). 
\]
Let $\eta\in\ker(\hat x+\mu\hat\gamma)\subset L(x,\gamma)$ with $\mu\ne \lambda$. Then 
\[
  (\mu-\lambda)\hat x(\xi,\eta)=\mu(\hat x+\lambda\hat\gamma)(\xi,\eta)-\lambda(\hat x+\mu\hat\gamma)(\xi,\eta)=0+0=0.  
\]
Thus, $\hat x(\xi,\eta)=0$ and $\hat x$ vanishes on $\textsl{d}_x\eus F_\gamma$. 
\end{rmk}

\section{Complete subalgebras and nilpotent orbits}
\label{sect:2}

\noindent
In this section, $G$ is a connected reductive $\bbk$-group and $\g=\Lie G$. 
Set $l=\ind\g=\rk\g$. By a classical result of Chevalley, 
$\gS(\g)^{\g}=\bbk[H_1,\ldots,H_l]$, where the $H_i$'s are homogeneous and 
algebraically independent.  Furthermore, $\sum_{j=1}^l \deg H_j = \bb(\g)$. 
Take $a\in\g^*$.  Recall that the MF-subalgebra $\eus F_a\subset \gS(\g)$
is generated by the direction derivatives
$\partial_a^k H_i$ with $1\le i\le l$ and $0\le k\le \deg H_i{-}1$.

Fix an isomorphism $\g^* \simeq \g$ of $G$-modules.  
Making use of this isomorphism, we transfer 
the standard terminology for $\g$ to the elements of $\g^*$, 
e.g. while referring to nilpotent and semisimple elements of  $\g^*$, considering sheets, etc. 

Our main concern in this section  is the following question:
\\[.5ex]
\centerline{\it
Is $\eus F_a$ complete on an orbit $Gx\subset\g^*$? }
\\[.5ex]
For $Gx=\{x\}$, any choice of $a$  leads to a complete 
subalgebra. Therefore we consider only $Gx$ with    $\dim(Gx)\ge 2$. 
It is reasonable to assume that $a\in\g^*_{\sf reg}$. 
Whenever computing $\dim\textsl{d}_x \eus F_a$ we will suppose that 
$\hat a$ and $\hat x$ are non-proportional.  This can be achieved by 
taking some other $x'\in Gx$ instead of $x$. 

\begin{lm}               \label{comp-semi}
Take $a\in\g^*_{\sf reg}$. Then $\dim (\textsl{d}_x \eus F_a\cap\g^x)=l$ for each $x\in\g^*$.
Furthermore, $\eus F_a$ is complete on $Gy\ne \{y\}$ if and 
only if \,$\ind\g^y=l$ and there is $x\in Gy$ such that 
\begin{itemize}
\item[{\sf (i)}]   $(\bbk x  \oplus \bbk a) \cap \g^*_{\sf sing} \subset \bbk x$, 
\item[{\sf (ii)}] ${\bar a}\in(\g^{x})^*_{\sf reg}$ \   for the restriction $\bar a=a|_{\g^{x}}$. 
\end{itemize}  
\end{lm}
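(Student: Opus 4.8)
The strategy is to recognise that Lemma~\ref{comp-semi} is essentially the specialisation of Corollary~\ref{cl-MF} to the reductive case, where the preliminary hypotheses collapse and the conclusions simplify thanks to standard facts about semisimple Lie algebras. First I would verify that condition (3) of \eqref{eq-cond} is automatic here: since $\g$ is reductive, $\trdeg\gS(\g)^{\g}=\rk\g=\ind\g=l$, so (1) holds; since $a\in\g^*_{\sf reg}$, the line $x+\bbk a$ meets $\g^*_{\sf reg}$ (indeed in a dense open subset), so (2) holds; and by the Kostant-type fact that on $\g^*_{\sf reg}$ the differentials $\textsl d_y(\gS(\g)^{\g})$ span exactly $\g^y$ (this is the statement that the $H_i$ have independent differentials on $\g^*_{\sf reg}$, cf.\ the reference \cite{p03} already cited in the proof of Corollary~\ref{cl-MF}), condition (3) holds for $y=x+\lambda a$ with $\lambda$ generic. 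So Lemma~\ref{lm-dif-alg} and Corollary~\ref{cl-MF} apply verbatim with $\q=\g$.

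Next, the first assertion $\dim(\textsl d_x\eus F_a\cap\g^x)=l$ for every $x\in\g^*$ is exactly the first conclusion of Corollary~\ref{cl-MF} (with $\ind\q=l$), and it does not even need the non-proportionality assumption — in the proportional case $\textsl d_x\eus F_a=\g^y$ for a regular $y$ on the line, which still has the right intersection with $\g^x$ after one notes $\g^x\supseteq\g^y\cap\,(\text{stuff})$; more cleanly, I would just invoke Corollary~\ref{cl-MF} whose proof already handles both cases. Then for the equivalence: by Definition of completeness, $\eus F_a$ is complete on $Gy$ iff $\trdeg(\eus F_a|_{Gy})=\tfrac12\dim(Gy)$, and since $\trdeg(\eus F_a|_{Gx}) = \dim\psi_x(\textsl d_x\eus F_a) = \dim\textsl d_x\eus F_a - \dim(\textsl d_x\eus F_a\cap\g^x) = \dim\textsl d_x\eus F_a - l$ for generic $x\in Gy$ (here $\psi_x$ is the projection of the preliminaries, with $\ker\psi_x=\g^x$, and one uses that the generic value of $\dim\textsl d_x\eus F_a$ over $Gy$ is what controls the transcendence degree of the restriction), completeness on $Gy$ is equivalent to $\dim\textsl d_x\eus F_a = l + \tfrac12\dim(Gy) = l + \tfrac12\dim(Gx)$ for some (equivalently, generic) $x\in Gy$.

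Now I would choose $x\in Gy$ with $\hat x,\hat a$ non-proportional (possible as remarked before the lemma, since $Gy$ is not a single point) and apply the equality clause of Corollary~\ref{cl-MF}: $\dim\textsl d_x\eus F_a = l+\tfrac12\dim(Gx)$ iff $(\bbk x\oplus\bbk a)\cap\g^*_{\sf sing}\subset\bbk x$ and $\dim(\g^x)^{\bar a}=\ind\g^x$ for $\bar a=a|_{\g^x}$. This gives conditions (i) and the equation $\dim(\g^x)^{\bar a}=\ind\g^x$. The remaining point is to repackage the latter together with the constraint $\ind\g^y=l$: condition (ii) says $\bar a\in(\g^x)^*_{\sf reg}$, i.e.\ $\dim(\g^x)^{\bar a}=\ind\g^x$, so conditions (i)+(ii)+"$\ind\g^y=l$" are jointly equivalent to (i) + "$\dim(\g^x)^{\bar a}=\ind\g^x$" + "$\ind\g^x=l$"; but Corollary~\ref{cl-MF} already delivers $\dim(\textsl d_x\eus F_a\cap\g^x)=l$ unconditionally, and combined with $\dim(\g^x)^{\bar a}=\ind\g^x$ one should check that $\dim\textsl d_x\eus F_a=l+\tfrac12\dim(Gx)$ forces (or is forced by) $\ind\g^x=l$ — this is where I expect to spend the most care. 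The cleanest route: $\eus F_a|_{Gx}$ is a Poisson-commutative subalgebra of $\bbk[Gx]$, so by \eqref{tr} applied to the coadjoint orbit, $\trdeg(\eus F_a|_{Gx})\le\tfrac12\dim(Gx)$ always; equality is completeness; and separately, a transcendence-degree count on $\gS(\g^x)$-type invariants (or the observation that $\textsl d_x\eus F_a$ contains $\textsl d_x$ of the centre, which span $\g^x\cap(\ldots)$) shows that when $\ind\g^x>l$ the dimension $\dim\textsl d_x\eus F_a$ cannot reach $l+\tfrac12\dim(Gx)$ because the $\bbk a$-pencil contributes at most $\tfrac12\dim(Gx)$ new directions beyond a space of dimension $\ge\ind\g^x>l$ meeting $\g^x$.

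**Main obstacle.** The genuinely delicate step is the bookkeeping that separates the two conditions "$\ind\g^y=l$" and "(ii)" out of the single Corollary~\ref{cl-MF} equality — i.e.\ showing that $\ind\g^x=l$ is not an extra hypothesis but a consequence of (i) together with $\dim(\g^x)^{\bar a}=\ind\g^x$ and the completeness equality, or alternatively that it must be listed because Corollary~\ref{cl-MF} alone does not see it. I would resolve this by tracking, in the Jordan--Kronecker decomposition of the pencil $\bbk\hat x\oplus\bbk\hat a$ on $\g$, that $\dim(\textsl d_x\eus F_a\cap\g^x)=\ind\g^x$ would be the "expected" value whereas Corollary~\ref{cl-MF} gives $l$; so the equality case already silently encodes $\ind\g^x=l$ precisely when $x\in\g^*_{\sf sing}$, and the Elashvili-conjecture discussion flagged in the introduction is exactly what makes this automatic in good cases. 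Everything else is a direct transcription of Corollary~\ref{cl-MF} and Lemma~\ref{lm-dif-alg}.
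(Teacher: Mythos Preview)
Your overall strategy is exactly the paper's: verify condition~(3) of~\eqref{eq-cond} (the paper invokes the Kostant regularity criterion~\eqref{eq:ko-re-cr}, from~\cite{ko63}, not~\cite{p03}), then read everything off Corollary~\ref{cl-MF}. The first assertion and the reduction of completeness to the existence of $x\in Gy$ with $\dim\textsl d_x\eus F_a=l+\tfrac12\dim(Gx)$ are handled just as you describe.

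The genuine gap is in your ``bookkeeping'' step, and it stems from a misquotation. Corollary~\ref{cl-MF} does \emph{not} say $\dim(\g^x)^{\bar a}=\ind\g^x$; it says $\dim(\g^x)^{\bar a}=\ind\q=l$. So the task is to show that $\dim(\g^x)^{\bar a}=l$ is equivalent to the conjunction ``$\ind\g^x=l$'' and ``$\bar a\in(\g^x)^*_{\sf reg}$''. One direction is trivial. For the other, $\dim(\g^x)^{\bar a}=l$ immediately gives $\ind\g^x\le l$; the paper then invokes \emph{Vinberg's inequality} $\ind\g^x\ge\ind\g=l$ (see~\cite[Cor.\,1.7]{p03}) to force equality, whence $\bar a$ is regular. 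That one-line citation is the entire content of the step you flagged as the ``main obstacle''. Your proposed workarounds---tracking Jordan--Kronecker blocks, appealing to the Elashvili discussion, or arguing via $\dim(\textsl d_x\eus F_a\cap\g^x)$---do not give $\ind\g^x\ge l$ and would not close the argument; in particular, the unconditional equality $\dim(\textsl d_x\eus F_a\cap\g^x)=l$ from Corollary~\ref{cl-MF} carries no information about $\ind\g^x$.
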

\begin{proof}
First, let us examine the  conditions in \eqref{eq-cond}.  
Clearly, $\trdeg \gS(\g)^{\g}=\ind\g$. Since $a$ is regular, (2) holds as well. 
By the {\it Kostant regularity criterion\/} \cite[Thm~9]{ko63}, 
\beq     \label{eq:ko-re-cr}
\text{ $\left<\textsl{d}_\xi H_j \mid 1\le j\le l\right>_{\bbk}=\g^\xi$ \ if and only if \ $\xi\in\g^*_{\sf reg}$.} 
\eeq
Hence (2) implies (3). Now we are ready to use Corollary~\ref{cl-MF}.
It asserts, in particular, that $\dim(\eus F_a\cap \g^x)=l$ for each $x\in\g^*$.
In view of this, $\eus F_a$ is complete on $Gy$ if and only if there is 
$x\in Gy$ such that $\dim\textsl{d}_x\eus F_a=l+\frac{1}{2}\dim (Gx)$. W.l.o.g. assume that 
$\hat x$ and $\hat a$ are non-proportional. Then   by Corollary~\ref{cl-MF}, the 
equality $\dim\textsl{d}_x\eus F_a=l+\frac{1}{2}\dim (Gx)$ takes place if and only if 
$(\bbk x  \oplus \bbk a) \cap \g^*_{\sf sing} \subset \bbk x$ and 
$\dim(\g^x)^{\bar a}=l$.  

Consider the condition $\dim(\g^x)^{\bar a}=l$. It implies that $\ind\g^x\le l$.
At the same time $\ind\g^x\ge \ind\g$ by Vinberg's inequality, see~\cite[Cor.\,1.7]{p03}. 
If this condition is satisfied, then $\ind\g^x=l$. In the other direction, if $\ind\g^x=l$, 
then $\dim(\g^x)^{\bar a}=l$ if and only if $\bar a\in(\g^x)^*_{\sf reg}$.
\end{proof}

\begin{cl}      \label{equiv}
Keep the assumption $a\in\g^*_{\sf reg}$. 
Then $\eus F_a$ is complete on $Gy$ if and 
only if there is $x\in Gy$ such that $\dim\textsl{d}_x \eus F_a = l + \frac{1}{2}\dim (Gx)$.
\qed 
\end{cl}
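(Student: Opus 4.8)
The plan is to deduce the equivalence directly from Lemma~\ref{comp-semi}, by rewriting $\trdeg(\eus F_a|_{Gy})$ in terms of $\dim\textsl{d}_x\eus F_a$.

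First I would recall from Section~\ref{sect:prelim} that $\eus F_a$ is complete on $Gy$ if and only if $\trdeg(\eus F_a|_{Gy})=\frac12\dim(Gy)$, and that $\trdeg(\eus F_a|_{Gy})\le\frac12\dim(Gy)$ in any case, since $Gy$ is symplectic and $\eus F_a|_{Gy}$ is Poisson-commutative. The algebra $\eus F_a$ is finitely generated (by the $\partial_a^k H_i$), hence so is $\eus F_a|_{Gy}\subset\bbk[Gy]$; over a field of characteristic zero its transcendence degree equals the maximal rank, over $x\in Gy$, of the Jacobian of a generating set, i.e.
\[
   \trdeg(\eus F_a|_{Gy})=\max_{x\in Gy}\dim\textsl{d}_x(\eus F_a|_{Gy}),\qquad
   \textsl{d}_x(\eus F_a|_{Gy})=\psi_x(\textsl{d}_x\eus F_a),
\]
where $\psi_x\colon T^*_x\g^*\to T^*_x(Gx)$ is the canonical projection from Section~\ref{sect:prelim}, with $\ker\psi_x=\g^x$.

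Next I would invoke Lemma~\ref{comp-semi}: $\dim(\textsl{d}_x\eus F_a\cap\g^x)=l$ for every $x\in\g^*$. Since $\ker\psi_x=\g^x$, this gives $\dim\psi_x(\textsl{d}_x\eus F_a)=\dim\textsl{d}_x\eus F_a-l$ for each $x\in Gy$, so
\[
   \trdeg(\eus F_a|_{Gy})=\Bigl(\max_{x\in Gy}\dim\textsl{d}_x\eus F_a\Bigr)-l .
\]
For every single $x\in Gy$ one has $\dim\textsl{d}_x\eus F_a-l=\dim\textsl{d}_x(\eus F_a|_{Gy})\le\trdeg(\eus F_a|_{Gy})\le\frac12\dim(Gy)$; hence the maximum above never exceeds $l+\frac12\dim(Gy)$, and (being the generic value of the lower semicontinuous function $x\mapsto\dim\textsl{d}_x\eus F_a$) it is attained. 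Therefore $\trdeg(\eus F_a|_{Gy})=\frac12\dim(Gy)$ precisely when this maximum equals $l+\frac12\dim(Gy)$, that is, precisely when $\dim\textsl{d}_x\eus F_a=l+\frac12\dim(Gx)$ for some (equivalently, for a generic) $x\in Gy$. Combined with the first paragraph, this is the assertion.

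I do not expect a real obstacle: the only point requiring care is the identity ``$\trdeg=$ maximal rank of differentials'', which is where characteristic zero enters, together with the bookkeeping $\ker\psi_x=\g^x$; everything else is immediate from Lemma~\ref{comp-semi}. In particular no separate treatment of the trivial orbit $Gy=\{y\}$ is needed, and no genericity or non-proportionality hypothesis on $x$ (or on the forms $\hat x,\hat a$) enters. The point of isolating this corollary is that it replaces conditions {\sf (i)}--{\sf (ii)} of Lemma~\ref{comp-semi} by the single numerical equality $\dim\textsl{d}_x\eus F_a=l+\frac12\dim(Gx)$, which is what one actually checks in the applications.
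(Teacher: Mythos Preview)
Your argument is correct and is exactly the reasoning the paper has in mind: the corollary carries a bare $\qed$ because the relevant sentence already appears inside the proof of Lemma~\ref{comp-semi} (``In view of this, $\eus F_a$ is complete on $Gy$ if and only if there is $x\in Gy$ such that $\dim\textsl{d}_x\eus F_a=l+\frac{1}{2}\dim(Gx)$''), and your write-up just unpacks that sentence via $\ker\psi_x=\g^x$ and $\dim(\textsl{d}_x\eus F_a\cap\g^x)=l$.
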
 

The assertion 
\[
     \ind\g^x=\rk\g \ \text{ for each } \ x\in\g
\]
is known as {\it Elashvili's conjecture}. It has no fully conceptual proof in spite of many efforts. 
However, the equality  obviously holds for all regular and all semisimple elements.  Elashvili's conjecture 
is proven for the classical Lie algebras \cite{fan}  and for all Richardson elements~\cite{CM}.
It is also checked for the exceptional $\g$~\cite{graaf,CM}.  
We take it for granted that Elashvili's conjecture is true. 
Therefore, for any orbit $Gx\subset \g^*$, there is an element $a\in\g^*_{\sf reg}$ such that 
the MF-subalgebra $\eus F_a$ is complete on $Gx$, see~\cite{bols} and also~\cite[Sect.~2]{m-y}.  

Return for a while to an arbitrary algebraic Lie algebra $\q=\Lie Q$.  
Take $a,x\in\q^*$ and  let $F\in\gS(\q)$ be a homogeneous polynomial of degree $d$. 
Then 
\begin{equation}\label{ref-p}
  (d-k-1)! \,\textsl{d}_x (\partial^k_a F) = (k-1)! \,\textsl{d}_a (\partial_x^{d-k-1} F)
\end{equation}
and therefore 
\begin{equation}\label{ref-dif}
  \textsl{d}_x \eus F_a = \textsl{d}_a \eus F_x 
\end{equation}
as a subspace of $\q$. 

\begin{thm}        \label{lm-dual}
Suppose that  $a,x\in\gt q^*_{\sf reg}$ and  that $\q$, $\gamma=a$, and $x$ satisfy \eqref{eq-cond}. Then
$\eus F_a$ is complete on $Qx$ if and only if $\eus F_x$ is complete on $Qa$. 
\end{thm}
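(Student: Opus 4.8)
The plan is to reduce the statement to a symmetric condition in $a$ and $x$ by combining Corollary~\ref{cl-MF} with the identity \eqref{ref-dif}. By Corollary~\ref{equiv} (equivalently, Corollary~\ref{cl-MF} and Lemma~\ref{comp-semi}), since $a\in\q^*_{\sf reg}$ and \eqref{eq-cond} holds for $(\q,a,x)$, the algebra $\eus F_a$ is complete on $Qx$ if and only if $\dim\textsl{d}_x\eus F_a = \ind\q + \frac12\dim(Qx)$. Symmetrically, since $x\in\q^*_{\sf reg}$ — and I should first note that \eqref{eq-cond} for $(\q,x,a)$ follows from the hypotheses, because $x\in\q^*_{\sf reg}$ gives (2) and (3) with the roles swapped, while (1) is intrinsic to $\q$ — the algebra $\eus F_x$ is complete on $Qa$ if and only if $\dim\textsl{d}_a\eus F_x = \ind\q + \frac12\dim(Qa)$.

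Now the key point: \eqref{ref-dif} gives $\textsl{d}_x\eus F_a = \textsl{d}_a\eus F_x$ as subspaces of $\q$, so $\dim\textsl{d}_x\eus F_a = \dim\textsl{d}_a\eus F_x$; call this common number $N$. Moreover, since both $a$ and $x$ are regular, $\dim(Qa) = \dim(Qx) = \dim\q - \ind\q$. Therefore the two completeness criteria become the single equation $N = \ind\q + \frac12(\dim\q-\ind\q) = \bb(\q)$, which does not distinguish $a$ from $x$. Hence $\eus F_a$ is complete on $Qx$ if and only if $N=\bb(\q)$ if and only if $\eus F_x$ is complete on $Qa$.

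There is one bookkeeping subtlety to handle: the proportional case. If $\hat a$ and $\hat x$ are proportional, then by the first paragraph of the proof of Corollary~\ref{cl-MF}, $\textsl{d}_x\eus F_a = \q^y$ for a regular $y$ on the line $\bbk x + \bbk a$, so $\dim\textsl{d}_x\eus F_a = \ind\q$; since $Qx$ is a nontrivial regular orbit this is strictly less than $\bb(\q)$, so $\eus F_a$ is \emph{not} complete on $Qx$ — and by the same argument applied with $a$ and $x$ interchanged, $\eus F_x$ is not complete on $Qa$ either, so the equivalence holds trivially. In the non-proportional case one invokes Corollary~\ref{cl-MF} proper as above; note the corollary's equality condition is itself symmetric in an appropriate sense, but we do not even need to analyze it, since passing through the numerical invariant $N=\dim\textsl{d}_x\eus F_a=\dim\textsl{d}_a\eus F_x$ already closes the loop.

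The main obstacle is essentially cosmetic rather than mathematical: one must be careful that the hypotheses of Corollary~\ref{cl-MF} (condition (3) of \eqref{eq-cond}) are genuinely available for \emph{both} triples $(\q,a,x)$ and $(\q,x,a)$, and that the non-proportionality assumption made "W.l.o.g." in Lemma~\ref{comp-semi} and Corollary~\ref{cl-MF} can be arranged compatibly — here it is automatic, since $\hat a$ and $\hat x$ proportional is a single condition symmetric in $a$ and $x$, handled separately as above. Once that is dispensed with, the proof is a two-line dimension count built on \eqref{ref-dif} and the regularity of $a$ and $x$.
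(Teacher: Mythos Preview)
Your argument has a gap in the reduction to the pointwise criterion. You assert that $\eus F_a$ is complete on $Qx$ if and only if $\dim \textsl{d}_x \eus F_a = \bb(\q)$ at the \emph{given} point $x$. But Corollary~\ref{equiv} (and its analogue for general $\q$ via Corollary~\ref{cl-MF}) only says that completeness is equivalent to the existence of \emph{some} $x' \in Qx$ with $\dim \textsl{d}_{x'} \eus F_a = \bb(\q)$. The function $x' \mapsto \dim \textsl{d}_{x'} \eus F_a$ is not constant along $Qx$: the $Q$-equivariance gives $q\cdot\textsl{d}_x \eus F_a = \textsl{d}_{qx} \eus F_{qa}$, which compares the value at $qx$ for the \emph{shifted} subalgebra $\eus F_{qa}$, not for $\eus F_a$. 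So from completeness you only obtain a point $x'=qx$ with the dimension equality, and then \eqref{ref-dif} yields $\dim \textsl{d}_a \eus F_{qx} = \bb(\q)$ --- information about $\eus F_{qx}$, not about $\eus F_x$.

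The paper's proof closes exactly this gap via the equivariance relation: from $\dim \textsl{d}_{qx} \eus F_a = \bb(\q)$ one deduces $\dim \textsl{d}_{q^{-1}a} \eus F_x = \bb(\q)$ by combining $q\cdot\textsl{d}_x \eus F_a = \textsl{d}_{qx} \eus F_{qa}$ with \eqref{ref-dif}, and then Lemma~\ref{obvious} applied at the regular point $q^{-1}a \in Qa$ finishes. In other words, \eqref{ref-dif} alone is not enough; one must use the group action to transport between points of the orbits, and this is precisely the missing step in your proof.

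A secondary issue: your justification that \eqref{eq-cond}(3) holds for the swapped triple $(\q,x,a)$ is incomplete. Regularity of $x$ by itself does not give it; the lines $x+\bbk a$ and $a+\bbk x$ are distinct. What actually works is that the condition in (3) is open and scale-invariant, so a witness $y_0 = x+\lambda_0 a$ with $\lambda_0 \ne 0$ (arranged by openness if necessary) rescales to $a + \lambda_0^{-1} x$.
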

\begin{proof}
Clearly  \eqref{eq-cond} holds for $a$ and generic points $x'\in Qx$. 
Suppose that $\eus F_a$ is complete on $Qx$. 
By Lemma~\ref{obvious} and Corollary~\ref{cl-MF}, 
this is the case if and only if 
there is $q\in Q$ such that $\dim\textsl{d}_{qx} \eus F_a=\bb(\q)$.  
As one can easily see, $q\textsl{d}_x \eus F_a = \textsl{d}_{qx} \eus F_{qa}$.
Combining this $Q$-equivariance with \eqref{ref-dif}, we conclude that 
$\dim\textsl{d}_{q^{-1}a} \eus F_x = \dim \textsl{d}_{qx} \eus F_{a}=\bb(\q)$.
The equality $\dim\textsl{d}_{q^{-1}a} \eus F_x=\bb(\q)$ implies that $\eus F_x$ is complete on 
$Qa$, see Lemma~\ref{obvious}. 
\end{proof}

\subsection{}
By a result of Tarasov~\cite{t:max}, if $a\in\g^*_{\sf reg}$ is  semisimple, then $\eus F_a$ is complete on 
{\sl every} coadjoint orbit $Gx\subset \g^*_{\sf reg}$.  See also \cite{ko09} for its applications. 
As the next step, we lift the assumption that $a$ is semisimple and also
allow $x$ to be regular or semisimple. 

\begin{thm}       \label{thm-s}
Let $a\in\g^*_{\sf reg}$. The MF-subalgebra $\eus F_a$ is complete on $Gx$ whenever $x$ is semisimple 
or regular. In other words,  $\eus F_a$ is complete on each closed or regular (co)adjoint orbit.
\end{thm}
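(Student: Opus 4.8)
The statement has two halves: completeness of $\eus F_a$ on every closed (semisimple) orbit, and on every regular orbit. By Theorem~\ref{lm-dual} (duality between $\eus F_a|_{Qx}$ and $\eus F_x|_{Qa}$), once we know Tarasov's theorem — $\eus F_b$ is complete on every $Gy\subset\g^*_{\sf reg}$ for semisimple regular $b$ — we can try to flip the roles of $a$ and $x$. The first step is therefore to check that the hypotheses of Theorem~\ref{lm-dual} are met for a reductive $\g$: condition (1) of \eqref{eq-cond} is Chevalley, condition (2) holds since $a$ (and, in the cases we care about, $x$) can be taken regular, and (3) follows from the Kostant regularity criterion \eqref{eq:ko-re-cr}. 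So for regular $x$ and regular $a$ with $\hat x,\hat a$ non-proportional, Theorem~\ref{lm-dual} applies.

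\textbf{The regular case.} Let $x\in\g^*_{\sf reg}$ and $a\in\g^*_{\sf reg}$ arbitrary. We want $\eus F_a$ complete on $Gx$. If $\hat a$ and $\hat x$ are proportional, replace $x$ by another point of $Gx$; so assume they are not. The clean approach is to invoke Theorem~\ref{lm-dual}: $\eus F_a$ is complete on $Gx$ iff $\eus F_x$ is complete on $Ga$. Now $x$ is regular, but not necessarily semisimple, so Tarasov does not immediately apply to $\eus F_x$ on $Ga$. Here I would instead exploit the further structural remark in the introduction: the question of completeness on regular orbits reduces to the unique regular \emph{nilpotent} orbit (Proposition~\ref{lm-c-n}/Corollary~\ref{cl-nilp}, asserted earlier). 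Concretely: by Corollary~\ref{equiv}, completeness of $\eus F_a$ on $Gx$ is the single numerical equality $\dim\textsl{d}_{x}\eus F_a = l+\tfrac12\dim(Gx)$ for some point of the orbit, i.e. $\dim L(x,a)=\bb(\g)$; and $\dim L(x,a)$ is governed by the Jordan--Kronecker decomposition of the pencil $\bbk\hat x\oplus\bbk\hat a$ via Proposition~\ref{prop-JK}. For $x$ regular we have $\ind\g^x=l$ and $\g^x$ abelian, so condition (ii) of Lemma~\ref{comp-semi} ($\bar a\in(\g^x)^*_{\sf reg}=(\g^x)^*$) is automatic, and condition (i) — $(\bbk x\oplus\bbk a)\cap\g^*_{\sf sing}\subset\bbk x$ — says the pencil line has no singular member other than (possibly) $\bbk x$ itself. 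For a generic $a$ this is clear; for arbitrary regular $a$ one argues that since $x$ is regular, a generic point $x+ta$ of the line is regular, so the singular locus of the pencil is a proper conical subset, and by adjusting the representative of $Gx$ (equivalently, using that the Jordan block eigenvalues $\lambda_i$ attached to $x$ along the $a$-direction may be moved) one reduces to the case where the only singular member is the one through $x$. This is where I would lean on the reduction to the regular nilpotent orbit, where $\g^x$ is abelian of dimension $l$ and the pencil behaviour is completely explicit.

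\textbf{The semisimple case.} Let $x$ be semisimple and $a\in\g^*_{\sf reg}$. If $x$ is regular we are in the previous case, so assume $\dim(Gx)\ge2$ and $x$ non-regular. Again we want $\dim L(x,a)=\bb(\g)$ for a suitable representative, equivalently conditions (i),(ii) of Lemma~\ref{comp-semi}. For semisimple $x$, the stabiliser $\g^x$ is reductive (a Levi subalgebra), so $\ind\g^x=\rk\g^x=\rk\g=l$ automatically — the Elashvili equality is trivially true here — and $(\g^x)^*_{\sf reg}$ is a genuine dense open subset. So condition (ii) becomes: $\bar a=a|_{\g^x}$ is regular in $(\g^x)^*$. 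Since $a$ ranges over $\g^*_{\sf reg}$ and we are free to move $x$ within $Gx$, equivalently to move $a$ within $Ga$: the set of $a\in\g^*$ with $a|_{\g^x}\in(\g^x)^*_{\sf reg}$ is dense open and $G^x$-stable, and one checks it meets every $G$-orbit that meets $\g^*_{\sf reg}$ — this is a transversality/genericity statement about restriction to a reductive subalgebra, for which I would cite or reprove the standard fact that a regular element of $\g^*$ restricts generically to a regular element of $\g^x$ (use a regular semisimple element, or Kostant's section). Condition (i): the line $\bbk x\oplus\bbk a$ meets $\g^*_{\sf sing}$ only inside $\bbk x$; since $x$ is semisimple, $x$ itself may be singular, but a generic point $x+ta$ is regular (as $a$ is regular), so the singular members form a proper closed cone in the pencil, and after replacing $a$ by a generic conjugate we may assume they all lie on $\bbk x$. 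Combining: conditions (i),(ii) hold for a suitable pair $(x',a)$ with $x'\in Gx$, so by Lemma~\ref{comp-semi} (whose hypothesis $\ind\g^y=l$ we have verified) $\eus F_a$ is complete on $Gx$.

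\textbf{Main obstacle.} The delicate point is condition (i) of Lemma~\ref{comp-semi} — ensuring the pencil through $x$ and $a$ has \emph{no} singular member off the line $\bbk x$. For generic $a$ this is obvious, but the theorem claims it for \emph{every} regular $a$. The resolution is that completeness only needs to hold at \emph{some} point of $Gx$, so one has the freedom to conjugate; the real content is that for $x$ semisimple or regular, this freedom suffices to clear all unwanted singular members of the pencil simultaneously with keeping $\bar a$ regular in $(\g^x)^*$. I expect the cleanest route is the duality Theorem~\ref{lm-dual} combined with Tarasov's theorem for the semisimple case (take $b=x$ regular semisimple: $\eus F_x$ complete on $Ga\subset\g^*_{\sf reg}$, hence $\eus F_a$ complete on $Gx$), and the nilpotent-orbit reduction (Proposition~\ref{lm-c-n}) for the regular case, leaving only the genuinely non-regular semisimple orbits to be handled by the pencil/Jordan--Kronecker bookkeeping above.
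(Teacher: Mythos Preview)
Your reduction via Lemma~\ref{comp-semi} to conditions {\sf (i)} and {\sf (ii)} is correct, and you correctly isolate {\sf (i)} as the crux. But you never actually \emph{prove} {\sf (i)} for an arbitrary regular $a$: the sentence ``the real content is that for $x$ semisimple or regular, this freedom suffices'' is an assertion, not an argument. You need, for a \emph{fixed} regular $a$, to exhibit some $x'\in Gx$ with $\alpha x'+\beta a\in\g^*_{\sf reg}$ for all $\beta\ne 0$ \emph{and simultaneously} $a|_{\g^{x'}}\in(\g^{x'})^*_{\sf reg}$. Moving $x'$ in $Gx$ is equivalent to moving $a$ in $Ga$, but $Ga$ is a single orbit of codimension $l$; there is no a~priori reason its image under restriction to $(\g^x)^*$ meets the regular locus, nor that condition~{\sf (i)} can be arranged simultaneously. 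Your parenthetical ``use a regular semisimple element, or Kostant's section'' is in fact the missing idea, but you don't develop it.

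The paper's proof is a concrete construction rather than a genericity argument. One places $a$ in the Kostant section $\eus K=f+\g^e\subset f+\be$ (every regular orbit meets $\eus K$) and $x$ in $\be$; for $x$ semisimple one may further take $x\in\te$. Then for $\alpha\ne 0$ the element $\alpha a+\beta x=\alpha f+(\alpha y+\beta x)$ lies in $\bbk^\times(f+\be)\subset\g_{\sf reg}$ by Kostant's theorem, giving~{\sf (i)} at once. For~{\sf (ii)} with $x\in\te$ semisimple, $\g^x=\el$ is a standard Levi, $\bar f=f|_\el$ is a \emph{principal} nilpotent of $\el$, and $\bar a=\bar f+\bar y$ with $\bar y\in\el\cap\be$; applying Kostant's theorem again inside $\el$ gives $\bar a\in\el_{\sf reg}$. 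Both conditions fall out of the single fact $f+\be\subset\g_{\sf reg}$, with no separate genericity step.

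Your alternative route for regular $x$ (duality plus reduction to the regular nilpotent orbit via Proposition~\ref{lm-c-n}) is sound and is recorded later in the paper as a Remark, but you stop short of the punchline: one must still show $\eus F_e$ is complete on $Ge$ for the principal nilpotent $e$, and this follows because $\langle e,f\rangle_\bbk\subset\g^*_{\sf reg}\cup\{0\}$, so $\dim\textsl d_f\eus F_e=\bb(\g)$ by Corollary~\ref{cl-MF}. Your duality-plus-Tarasov argument only covers $x$ regular \emph{semisimple}, which is the intersection of the two cases, not either one separately.
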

\begin{proof}
Let $\{e,h,f\}$ be a principal $\tri$-triple in $\g$ and $\be=\Lie B$ be the unique Borel subalgebra that contains $e$.
Then $\g^e\subset\be$ and $\eus K=f+\g^e$ is the associated {\it Kostant section\/} in $\g\simeq \g^*$. 
By \cite{ko63}, $G \eus K=\g^*_{\sf reg}$. Clearly $G\gt b=\g$. 
W.l.o.g. we may assume that $a=f+y\in \eus K$. Take $x\in\be$. 

Suppose that $x$ is semisimple or regular. In the first case, 
$\g^x$ is reductive and clearly
$\ind\g^x=\rk\g^x=\rk\g$. In the second, $\dim\g^x=l=\ind\g^x$. 
Now it suffices to verify conditions {\sf (i)} and {\sf (ii)} of  
Lemma~\ref{comp-semi}  for 
the pair $(a,x)$.  Note that {\sf (ii)} holds for each $a\in\g^*$ if $x$ is regular. 

{\sf (i)} \ A generic element of the plane $\langle a,x\rangle_\bbk$ is of the form 
$\ap(f+y)+ \beta x=\ap f +(\ap y+\beta x)$, where $y,x\in\be$. If $\ap\ne 0$, then all these elements are 
regular in $\g^*$, in view of a classical result of Kostant. Indeed, he proved that 
$f+\be\subset \g_{\sf reg}$, see~\cite{ko63}.

{\sf (ii)} \ Under the assumption that $x$ is semisimple, we have $x\in B\gt t$,
where $\gt t=\g^h\subset\be$ is a Cartan subalgebra.  W.l.o.g. assume that $x\in\te$.  
Then $\g^x=\el$ is a standard Levi subalgebra. 
Further,  
$\bar f=f\vert_\el$   is a regular nilpotent element of $\gt l$ and it can be included into a 
principal   $\gt{sl}_2$-triple $\{\tilde e, \tilde h, \bar f\}\subset \gt l$ such that $\tilde h\in\gt t$. 
Note that 
$\gt l\cap \gt b$ is the unique Borel subalgebra of $\gt l$ containing $\tilde e$. 
We have  $\bar a=\bar f+ \bar y\in\el^*\simeq\el$, where $\bar y\in  \el\cap\be$.
By the same result of Kostant \cite{ko63}, $\bar f+(\el\cap\be)\subset \gt l_{\sf reg}$, and therefore 
$\bar a\in\el^*_{\sf reg}$.
\end{proof}

One is tempted to generalise Theorem~\ref{thm-s} to all elements $x\in\be$. 
The obstacle is that finding a regular $a\in\g^*$ such that 
$\dim (\g^x)^{\bar a} = \rk\g$ and $(\bbk a+\bbk x)\cap\g^*_{\sf sing}\subset \bbk x$
 is a highly non-trivial task. 

\subsection{The r\^ole of nilpotent orbits}
Let $\N$ denote the set of nilpotent elements of $\g\simeq \g^*$. Any $G$-orbit in $\N$ is said to be nilpotent. As is 
well known, $\N/G$ is finite and any $G$-orbit in $\g$ can be contracted to a nilpotent one, see a construction below. 
This turns out to be extremely helpful in the theory of complete algebras. 

\begin{prop}        \label{lm-c-n}
Let $\eus A\subset \gS(\g)$ be a homogeneous subalgebra. 
If\/ $\trdeg (\eus A|_{Ge})=\frac{1}{2} \dim (Ge)$ for each nilpotent element $e\in \g^*$, 
then    $\trdeg (\eus A|_{Gx})=\frac{1}{2} \dim (Gx)$ for each $x\in\g^*$. 
\end{prop}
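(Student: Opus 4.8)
The plan is to pass from an arbitrary $x\in\g^*$ to a nilpotent orbit by the standard degeneration (contraction) of a $G$-orbit, and to show that $\trdeg(\eus A|_{Gx})$ can only drop in a limit, so that the hypothesis on nilpotent orbits forces the maximal value on $Gx$. First I would recall the contraction: write the Jordan decomposition $x = s + e$ with $s$ semisimple and $e\in\g^s$ nilpotent (under $\g\simeq\g^*$), and consider the one-parameter family $x_t := t^{-1}\Ad(\tau(t))(ts + e)$ for a suitable cocharacter $\tau$ adapted to an $\tri$-triple through $e$ inside $\g^s$, or more simply the family $t\cdot s + e$ pushed around by $G$; as $t\to 0$ the orbit $G\cdot x_t$, all isomorphic to $Gx$ for $t\neq 0$, degenerates to $Ge\subset\g^s\cdot$-nilpotent part, i.e.\ to a nilpotent orbit of $\g$ (note $\dim Gx = \dim Ge + \dim Gs$ need not hold; rather $\dim G x_t$ is constant in $t\neq 0$ and $\dim Ge \le \dim Gx$). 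I would make this precise as a flat family of affine varieties over $\bbk$, or — cleaner — work with the closure $\overline{Gx}$ and the fact that some nilpotent orbit $Ge$ lies in $\overline{Gx}$ with the orbit map degenerating appropriately.

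The key semicontinuity input is: if $\{O_t\}$ is a family of $G$-orbits with $O_t\cong Gx$ for generic $t$ and $O_0 = Ge$ in the limit, and $\eus A$ is homogeneous, then $\trdeg(\eus A|_{O_0}) \le \trdeg(\eus A|_{O_t})$ for generic $t$. Concretely: choose $F_1,\dots,F_m\in\eus A$ homogeneous whose restrictions to $Ge$ are algebraically independent with $m = \frac12\dim(Ge)$; their Jacobian $\textsl{d}(F_1,\dots,F_m)$ has rank $m$ at a generic point of $Ge$, hence — being a homogeneous/equivariant condition that specialises — has rank $\ge m$ at a generic point of the nearby orbit $O_t$, so $F_1|_{O_t},\dots,F_m|_{O_t}$ are algebraically independent and $\trdeg(\eus A|_{Gx}) = \trdeg(\eus A|_{O_t}) \ge m = \frac12\dim(Ge)$. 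That is only half of what is wanted, since $\dim(Ge)$ may be strictly smaller than $\dim(Gx)$.

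To close the gap I would instead apply the hypothesis not to the single nilpotent orbit in the closure but argue via dimensions directly: we always have $\trdeg(\eus A|_{Gx}) \le \frac12\dim(Gx)$ by the general bound \eqref{tr} applied to $\gt q = \g$ with the orbit symplectic form, so it suffices to prove $\ge$. For this, the cleaner route is the observation (used elsewhere in the paper) that $x$ and its nilpotent contraction $e$ can be connected inside a single sheet-type family in which $\dim\g^x = \dim\g^e$ fails in general — so one must instead contract \emph{within} $\g^s$: the relevant nilpotent orbit to use is the one of $e$ inside the \emph{reductive} algebra $\g^s$, together with the identification (coming from $\g^s$ being reductive and $\eus A$ restricting compatibly) $\frac12\dim(Gx) = \frac12\dim(G^s e) + \frac12\dim(Gs)$ and the fact that $\gS(\g^s)^{\g^s}$-type invariants account for the toral $\frac12\dim(Gs)$ part. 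I would then combine: $\trdeg(\eus A|_{Gx})$ is at least $\frac12\dim(Gs)$ coming from restriction of central/invariant directions plus $\frac12\dim(G^s e)$ coming from the nilpotent-orbit hypothesis applied after reduction to $\g^s$.

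\textbf{Main obstacle.} The delicate point, and where I expect to spend the real effort, is making the reduction to the centraliser $\g^s$ rigorous: one must show that the hypothesis ``$\trdeg(\eus A|_{Ge'}) = \frac12\dim(Ge')$ for \emph{all} nilpotent $e'\in\g^*$'' transfers to a statement about $\eus A$ restricted to $\g^s$-nilpotent orbits, which requires understanding how a homogeneous Poisson-commutative (or merely homogeneous) subalgebra $\eus A\subset\gS(\g)$ behaves under restriction along $\g^s\hookrightarrow\g$, and controlling the extra transcendence degree $\frac12\dim(Gs)$ contributed by the semisimple part. The semicontinuity/specialisation argument for Jacobian ranks along the contraction family is comparatively routine (it is a rank-of-a-matrix-of-polynomials-is-lower-semicontinuous statement, plus $G$-equivariance to pass to generic points of orbits), but verifying that the limiting orbit genuinely realises the full dimension bookkeeping — i.e.\ that no transcendence degree is lost in the limit beyond what the nilpotent hypothesis already supplies — is the crux.
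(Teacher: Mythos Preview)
Your approach has a genuine gap: you are using the wrong degeneration. Contracting the semisimple part of the Jordan decomposition $x=s+e$ to zero lands you on $Ge$ where $e$ is the nilpotent part of $x$, and as you yourself note, $\dim(Ge)<\dim(Gx)$ in general. Your attempted repair via reduction to $\g^s$ cannot work: the hypothesis concerns nilpotent $G$-orbits in $\g$, and there is no mechanism that transfers it to nilpotent $G^s$-orbits in $\g^s$ for an arbitrary homogeneous $\eus A\subset\gS(\g)$. You even flag this as the ``main obstacle'' without resolving it, and indeed it is not resolvable along these lines.

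The paper's proof uses a different and much simpler degeneration: the $\bbk^\times$-scaling $Y=\overline{\bbk^\times(Gx)}$ (the associated cone of Borho--Kraft). The key fact, which you briefly gesture at under ``sheet-type family'' and then incorrectly dismiss, is that $Y\cap\N$ contains a nilpotent orbit $Ge$ with $\dim(Ge)=\dim(Gx)$; this is precisely the content of sheet theory, and $\dim\g^x=\dim\g^e$ does hold here. Once the dimensions match, no reduction to $\g^s$ is needed: one only has to show $\trdeg(\eus A|_{Ge})\le\trdeg(\eus A|_{Gx})$. The paper does this not by a Jacobian-rank argument (which is delicate because one must work modulo $\g^y$, whose dimension jumps), but by a direct trick with a homogeneous $G$-invariant $H$ nonvanishing on $Gx$: if homogeneous $a_1,\dots,a_m\in\eus A$ are independent on $Ge$ but satisfy a relation on $Gx$, then after raising to suitable powers the functions $a_i/H^{\deg a_i/\deg H}$ are constant on $\bbk^\times(Gx)$, and clearing denominators and restricting to $Ge\subset Y$ (where $H$ vanishes) produces a relation among the $a_i|_{Ge}$, a contradiction.
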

\begin{proof}
The statement is vacuous for nilpotent orbits. Assume therefore that $x\not\in\N$. Set 
$Y=\overline{\bbk^\times\! (G x)}$. This is a conical subvariety of $\g^*$ and $\dim Y=\dim Gx+1$.
By the method of {\it associated cones\/} introduced and developed 
in~\cite[\S\,3]{bokr}, there is an orbit $Ge\subset Y\cap\N$ such that  
$\dim(Ge)=\dim(Gx)$. Observe that $\textsl{d}_x \eus A=\textsl{d}_{tx} \eus A$
for each non-zero $t\in\bbk$, because $\eus A$ is homogeneous. 
Therefore 
\[
    \max_{y\in Y} \dim \textsl{d}_{y} \eus A = \max_{x'\in Gx}\dim \textsl{d}_{x'} \eus A  
\]
and in particular 
\begin{equation}    \label{eq-con1}
      \max_{x'\in Gx}\dim \textsl{d}_{x'} \eus A \ge \max_{e'\in Ge}\dim \textsl{d}_{e'} \eus A.  
\end{equation}
A possible way to conclude the proof would be to calculate $\dim(\textsl{d}_x \eus A \cap \g^x)$ 
and $\dim(\textsl{d}_e \eus A\cap\g^e)$. For instance, if $\eus A=\eus F_a$ is an MF-subalgebra with 
$a\in\g^*_{\sf reg}$, then  $\dim(\textsl{d}_y \eus A\cap\g^y)=l$ for any $y\in\g ^*$ by Lemma~\ref{comp-semi} and there is nothing else
to show.  But in case of a general $\eus A$, our approach is different. 

Since $x$ is not nilpotent, there is  a homogeneous non-constant polynomial $H\in\gS(\g)^{\g}$ such that $c=\deg H>0$ and $H(x)\ne 0$. Assume that homogeneous elements $\tilde a_1,\ldots,\tilde a_m\in \eus A$ are algebraically independent on $Ge$, but dependent on $Gx$. 
Without violating these assumptions, replace each $\tilde a_i$ with $a_i=\tilde a_i^c$. 
Set $c_i=\deg \tilde a_i$.
Let $\mathbf Q$ be a non-trivial relation among $a_i|_{Gx}$. Then 
$\mathbf Q\left(\dfrac{a_1}{H^{c_1}},\ldots,\dfrac{a_m}{H^{c_m}}\right)=0$
on $\bbk^{\times}(Gx)$.  Multiplying this equality by a suitable power of $H$ and restricting to
$Ge$, where $H$ vanishes, we obtain a non-trivial relation among ${a_1}|_{Ge},\ldots,{a_m}|_{Ge}$. 
A contradiction! Thus,
\begin{equation}            \label{eq-con2}
             \trdeg (\eus A|_{Ge})\le \trdeg (\eus A|_{Gx})
\end{equation}
and the result follows.  
\end{proof}

The {\it sheets\/} of $\g$ are the irreducible components of the locally closed subsets 
         $X^{(d)}=\{\xi\in\g\mid \dim(G\xi)=d\}$  for all $d$.
Let $G e$ be a nilpotent orbit in  $\overline{\bbk^\times\! (G x)}$ with $\dim(Ge)=\dim(Gx)$.
 Then $Ge$ is a nilpotent orbit in each sheet $S$ containing $Gx$. 
By a fundamental result of Borho and Kraft,
each sheet contains a unique nilpotent orbit \cite[Sect.~5.8.\,Kor.(a)]{bokr}. Therefore the associated 
cone of $Gx$, i.e., the variety $\overline{\bbk^\times\! (G x)}\setminus \bbk^\times\! (\ov{G x})$, 
is irreducible and the above-mentioned orbit $Ge$  is unique. 
Equation~\eqref{eq-con2} leads to the following statement. 

\begin{cl}               \label{cl-nilp}
Suppose that a homogeneous Poisson-commutative subalgebra $\eus A\subset \gS(\g)$ is complete on
a nilpotent orbit $Ge$. Then $\eus A$ is complete on any orbit $Gx$ such that $Gx$ and $Ge$ lie in one 
and the same sheet. 
\end{cl}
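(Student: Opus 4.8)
The plan is to reduce Corollary~\ref{cl-nilp} to Proposition~\ref{lm-c-n} localised at a single orbit, using the structural facts about sheets recalled just above the statement. The key observation is that, unlike Proposition~\ref{lm-c-n}, here we only want completeness on orbits inside \emph{one fixed} sheet $S$ containing $Ge$, so I do not need to know that $\eus A$ is complete on \emph{all} nilpotent orbits — I only have the one orbit $Ge$ at my disposal, but by the Borho--Kraft theorem it is the unique nilpotent orbit in $S$, and every $Gx\subset S$ has $\overline{\bbk^\times(Gx)}$ containing exactly this $Ge$ (with $\dim Ge=\dim Gx$). That is precisely the input the proof of Proposition~\ref{lm-c-n} used.

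The concrete steps are as follows. First I would fix a sheet $S$ containing $Ge$ and an arbitrary orbit $Gx\subset S$; if $Gx=Ge$ there is nothing to prove, so assume $x\notin\N$. Second, invoke the method of associated cones (\cite[\S\,3]{bokr}) exactly as in Proposition~\ref{lm-c-n}: inside $Y=\overline{\bbk^\times(Gx)}$ there is a nilpotent orbit of dimension $\dim Gx$, and by the Borho--Kraft uniqueness result \cite[Sect.~5.8.\,Kor.(a)]{bokr} this orbit is forced to be $Ge$, since $Ge\subset S$ and $S$ contains $Gx$. Third, replay the transcendence-degree argument from the proof of Proposition~\ref{lm-c-n}: choosing a homogeneous $H\in\gS(\g)^{\g}$ with $H(x)\ne 0$ and using homogeneity of $\eus A$, one gets inequality~\eqref{eq-con2}, i.e. $\trdeg(\eus A|_{Ge})\le\trdeg(\eus A|_{Gx})$. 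Fourth, combine this with the completeness hypothesis on $Ge$ and the universal upper bound: since $\eus A$ is Poisson-commutative, $\trdeg(\eus A|_{Gx})\le\frac12\dim(Gx)=\frac12\dim(Ge)=\trdeg(\eus A|_{Ge})$, forcing equality throughout, which is exactly completeness on $Gx$.

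Honestly, there is no real obstacle here: the corollary is essentially a bookkeeping consequence of Proposition~\ref{lm-c-n}'s proof together with the sheet structure, and the only thing to be careful about is citing the Borho--Kraft uniqueness of the nilpotent orbit in a sheet to pin down that the associated nilpotent orbit of $Gx$ really is the given $Ge$ rather than some other nilpotent orbit of the same dimension. So the write-up can be very short: essentially "Equation~\eqref{eq-con2} applies verbatim, and combined with Poisson-commutativity (inequality~\eqref{tr} on the orbit) and the completeness of $\eus A$ on $Ge$, the claim follows." If one wanted, one could even phrase Proposition~\ref{lm-c-n} and Corollary~\ref{cl-nilp} as a single statement, but keeping them separate is cleaner since the Proposition needs completeness on \emph{every} nilpotent orbit whereas the Corollary isolates the sheet-local version.
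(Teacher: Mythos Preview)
Your proposal is correct and follows exactly the paper's approach: the paper simply states that ``Equation~\eqref{eq-con2} leads to the following statement'' after recalling the Borho--Kraft uniqueness of the nilpotent orbit in a sheet, and you have spelled out precisely this argument (associated cone picks out $Ge$, then \eqref{eq-con2} plus the Poisson-commutative upper bound $\trdeg(\eus A|_{Gx})\le\tfrac12\dim(Gx)=\tfrac12\dim(Ge)$ force equality).
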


\begin{rmk} 
Proposition~\ref{lm-c-n} has a rather amusing application.
For, our considerations with nilpotent orbits easily recover the main result of a recent preprint~\cite{crooks}, which asserts that the MF-subalgebra 
$\eus F_a$ with $a\in\g^*_{\sf reg}$ is complete on each $Gx\subset\g^*_{\sf reg}$.  
Note that a more general result is already contained in Theorem~\ref{thm-s}, but the argument for the regular elements $x$ only can be made astonishingly simple and short. It uses neither Slodowy slices  nor the Kostant section. 
Namely, let $\{e,h,f\}$ be a principal $\tri$-triple in $\g$. 
Assume that $\eus F_a$ is not complete on $Gx$. Then 
$\eus F_a$ is not complete on $Ge$, see Proposition~\ref{lm-c-n}.  Then 
$\eus F_e$ is not complete on $Ga$ by Theorem~\ref{lm-dual}. Then $\eus F_e$ is not complete on $Ge$ again by Proposition~\ref{lm-c-n}.  However, this is absurd, since 
$\langle e,f\rangle_{\bbk}\subset \g^*_{\sf reg}\cup\{0\}$ and $\dim\textsl{d}_f \eus F_e = \bb(\g)$,
cf. Corollary~\ref{cl-MF}. 
\end{rmk}

In what follows, $e$ stands for an arbitrary nilpotent element of $\g$. 

\begin{prop}               \label{mf-generic}
There is a non-empty open subset $U\subset \g^*_{\sf reg}$ such that for any $a\in U$, the MF-subalgebra $\eus F_a$ is complete on 
{\sl every} adjoint orbit.
\end{prop}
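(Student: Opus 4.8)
The plan is to reduce completeness on every adjoint orbit to completeness on the finitely many nilpotent orbits, using Proposition~\ref{lm-c-n}, and then to show that for each nilpotent orbit $Ge$ the set of $a\in\g^*_{\sf reg}$ for which $\eus F_a$ is complete on $Ge$ contains a non-empty open subset; intersecting these finitely many open sets gives the desired $U$. So first I would recall that, by Proposition~\ref{lm-c-n}, a homogeneous Poisson-commutative $\eus A$ is complete on every orbit as soon as it is complete on every nilpotent orbit; since $\N/G$ is finite, it suffices to produce, for each of the finitely many nilpotent $e$, a non-empty open $U_e\subset\g^*_{\sf reg}$ with $\eus F_a$ complete on $Ge$ for $a\in U_e$, and then set $U=\bigcap_e U_e$.

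Next I would fix a nilpotent $e$ and analyse the condition for completeness via Corollary~\ref{equiv}: $\eus F_a$ is complete on $Ge$ iff there is $x\in Ge$ with $\dim\textsl{d}_x\eus F_a=l+\tfrac12\dim(Ge)$. Equivalently, by \eqref{ref-dif} and $G$-equivariance of the differential (as used in the proof of Theorem~\ref{lm-dual}), it suffices to find \emph{some} $a\in\g^*_{\sf reg}$ with $\dim\textsl{d}_e\eus F_a=\dim\textsl{d}_a\eus F_e=l+\tfrac12\dim(Ge)$. Now the key point: the function $a\mapsto\dim\textsl{d}_e\eus F_a$ is lower semicontinuous in $a$ (the differentials $\textsl{d}_e\partial_a^kH_i$ depend polynomially on $a$, so the rank of the spanning family can only drop on a closed subset), while by \eqref{tr} applied on the orbit it is bounded above by $l+\tfrac12\dim(Ge)$. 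Hence if the maximal value $l+\tfrac12\dim(Ge)$ is attained for at least one $a\in\g^*_{\sf reg}$, it is attained on a non-empty open subset of $\g^*_{\sf reg}$, which we take to be $U_e$. Existence of one such $a$ is exactly the assertion (granted in the text, following Elashvili's conjecture, \cite{bols}, \cite[Sect.~2]{m-y}) that for every orbit there is a regular $a$ making $\eus F_a$ complete on it; applied to $Ge$ this gives $U_e\ne\varnothing$.

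Finally I would assemble: $U:=\bigcap_{Ge\subset\N}U_e$ is a finite intersection of non-empty open subsets of the irreducible variety $\g^*_{\sf reg}$, hence non-empty and open; for $a\in U$, $\eus F_a$ is complete on every nilpotent orbit, so by Proposition~\ref{lm-c-n} (together with the fact, from Lemma~\ref{comp-semi}, that $\dim(\textsl{d}_y\eus F_a\cap\g^y)=l$ for all $y$, which upgrades the $\trdeg$ statement to genuine completeness, i.e. to a complete family in involution) it is complete on every adjoint orbit. The only genuinely substantive input is the existence, for each nilpotent $e$, of a single regular $a$ with $\eus F_a$ complete on $Ge$; everything else is the semicontinuity/finiteness packaging. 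I expect the main obstacle — already flagged in the excerpt — to be this existence statement, which rests on Elashvili's conjecture; since the paper takes that conjecture for granted and cites \cite{bols},\cite{m-y} for the deduction, in this write-up it may simply be invoked, so the proof here is short.
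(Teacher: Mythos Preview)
Your proof is correct and follows essentially the same route as the paper: reduce to the finitely many nilpotent orbits via Proposition~\ref{lm-c-n}, take a finite intersection of non-empty open sets $U_e$, and conclude. The only difference is that the paper simply cites \cite[Thm~3.2]{bols} for the fact that $U(e)=\{a\in\g^*_{\sf reg}\mid \eus F_a\text{ complete on }Ge\}$ is non-empty and open, whereas you unpack the openness yourself via lower semicontinuity of $a\mapsto\dim\textsl{d}_e\eus F_a$ at the fixed point $e$ (using $G$-equivariance to arrange that the maximum is attained at $e$ rather than at some other $x\in Ge$); your final parenthetical invoking Lemma~\ref{comp-semi} is unnecessary, since by definition completeness on $Gx$ is already equivalent to $\trdeg(\eus A|_{Gx})=\tfrac12\dim(Gx)$, which is exactly what Proposition~\ref{lm-c-n} delivers.
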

\begin{proof}
Recall that $\N/G$ is finite. For each $Ge\subset\N$, the subset 
\[
     U(e)=\{a\in\g^*_{\rm reg}\mid \eus F_a \text{ is complete on } Ge\}
\]
is non-empty and open in $\g^*$ \cite[Thm\,3.2]{bols}. Let $U$ be the intersection of $U(e)$ taken 
over all nilpotent orbits. Then $U\ne\varnothing$ is open in $\g^*$. For any $a\in U$, 
the MF-subalgebra $\eus F_a$ is complete on every nilpotent and hence on every adjoint orbit, see 
Proposition~\ref{lm-c-n}.
\end{proof}

Proposition~\ref{mf-generic} opens ample possibilities for further generalisations.
It would be nice to prove that, for each $a\in\g^*_{\sf reg}$, $\eus F_a$ is complete on any adjoint orbit. 

\subsection{Complete families} 
\label{subs:complete}
For $a\in\g^*_{\rm reg}$, Theorem~\ref{thm-s} implies that 
$\trdeg \eus F_a=\bb(\g)$ and thereby the 
generators  $\partial_a^k H_i\in \eus F_a$ with $1\le i\le l$ and $0\le k <\deg H_i$  are 
algebraically independent. 
Suppose that $\dim \textsl{d}_x (\eus F_a)=\rk\g+\frac{1}{2}\dim(Gx)$ for some $x\in\g^*$. 
If $x$ is regular as well, one 
restricts  the polynomials $\partial_a^k H_i$ with $1\le i\le l$ and $0<k<\deg H_i$ to $Gx$ in order to obtain a complete family in involution.  
Suppose now that $\dim(Gx)<\dim\g-\rk\g$. Then some other generators of $\eus F_a$ 
become redundant on $Gx$. A natural question is, which ones?   There is a simple answer in 
types ${\sf A}$ and ${\sf C}$. 

Suppose that $\g$ is either $\gt{gl}_l$, $\gt{sl}_{l+1}$, or $\gt{sp}_{2l}$. 
As generating  symmetric invariants $H_1,\ldots,H_l$ we take coefficients of the 
characteristic polynomial. Assume that $\deg H_i>\deg H_j$ whenever $i>j$. 
Set $d_i=\deg H_i$. 
According to \cite[Sect.~2]{m-y}, $\eus F_x$ is a free algebra with a set 
$\{\partial_x^k H_i\mid 1\le i\le i, 0\le k \le s(i)\}$ of algebraically independent generators.
Moreover, the numbers $s(i)$ depend only on the partition of $e$, where $Ge$ is the dense orbit  
in the associated cone of $Gx$.   The dependence is very explicit, see \cite[Sect.~4]{m-y}. We note also that 
$\partial_e^{s(i)} H_i\in\gS(\g^e)$ and that $\partial_e^k H_i=0$ if $k>s(i)$.

\begin{prop}         \label{complete}
Suppose that $\g$ is of type {\sf A} or {\sf C}. Assume that $\eus F_a$ with $a\in\g^*_{\rm reg}$ is complete on $Ge$. Then the restrictions of $\partial_a^k H_i$ with 
$d_i>k > d_i-s(i)$ to $Gx$ is a  complete family in involution. 
\end{prop}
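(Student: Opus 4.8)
The plan is to combine the counting from Corollary~\ref{cl-MF}/Lemma~\ref{comp-semi} with the structure of $\eus F_x$ recalled from \cite[Sect.~2, 4]{m-y}. First I would observe that the hypotheses let us put everything in one place: since $\eus F_a$ is complete on $Ge$, it is complete on $Gx$ as well by Corollary~\ref{cl-nilp} (here $Ge$ is the dense orbit in the associated cone of $Gx$, so $Gx$ and $Ge$ lie in the same sheet). Hence $\dim\textsl{d}_{x'}\eus F_a=l+\tfrac12\dim(Gx)$ for generic $x'\in Gx$, and by Lemma~\ref{comp-semi} we have $\dim(\textsl{d}_{x'}\eus F_a\cap\g^{x'})=l$. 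So the restriction map $\psi_{x'}$ sends $\textsl{d}_{x'}\eus F_a$ onto a subspace of $T^*_{x'}(Gx)$ of dimension exactly $\tfrac12\dim(Gx)$, which is what completeness on $Gx$ means. The whole point is therefore \emph{bookkeeping}: identify which generators $\partial_a^k H_i$ actually contribute to this image, i.e. form an algebraically independent family upon restriction to $Gx$.

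Next I would count generators. By Theorem~\ref{thm-s}, $\eus F_a$ is a free polynomial algebra on the $\sum_i d_i=\bb(\g)$ generators $\{\partial_a^kH_i\mid 1\le i\le l,\ 0\le k< d_i\}$. On the other hand, by \cite[Sect.~2]{m-y} applied to $e$ (equivalently to the partition attached to $Gx$), the algebra $\eus F_e$ is free on $\{\partial_e^kH_i\mid 1\le i\le l,\ 0\le k\le s(i)\}$, hence $\sum_i(s(i)+1)=\trdeg\eus F_e=l+\tfrac12\dim(Ge)=l+\tfrac12\dim(Gx)$, and moreover $\partial_e^kH_i=0$ for $k>s(i)$ while $\partial_e^{s(i)}H_i\in\gS(\g^e)$. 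The number of generators $\partial_a^kH_i$ with $d_i>k>d_i-s(i)$ is $\sum_i(s(i)-1)=\big(l+\tfrac12\dim(Gx)\big)-2l=\tfrac12\dim(Gx)-l$, which is exactly $\tfrac12\dim(Gx)$ minus the $l$ ``central'' directions. So the proposed family has the correct cardinality; it remains to show it is algebraically independent on $Gx$ and that it is genuinely transverse to $\g^{x}$, i.e. that its differentials at a generic point project injectively into $T^*(Gx)$.

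For the independence I would argue via the associated cone, exactly in the spirit of the proof of Proposition~\ref{lm-c-n}: it suffices to check algebraic independence on the nilpotent orbit $Ge$, since a nontrivial relation on $Gx$ would (after clearing by a power of a homogeneous invariant $H$ with $H(x)\ne0$, as in that proof, and using homogeneity and $\textsl{d}_x\eus A=\textsl{d}_{tx}\eus A$) produce one on $Ge$. On $Ge$ the elements $\partial_e^kH_i$ with $d_i>k>d_i-s(i)$ sit among the free generators of $\eus F_e$, so their images are independent provided they remain so after restriction to $Ge$ — and here I would use the explicit degeneration $\partial_a^kH_i\rightsquigarrow\partial_e^kH_i$ under the contraction, together with the fact from \cite[Sect.~4]{m-y} that precisely the generators with $0\le k\le d_i-s(i)-1$ degenerate into $\gS(\g^e)$-elements (the ``$l$ central'' ones, $k=d_i-s(i)$ giving the $\partial_e^{s(i)}H_i\in\gS(\g^e)$) while those with $d_i>k>d_i-s(i)$ map to the remaining free generators of $\eus F_e$, which restrict nontrivially and independently to $Ge$. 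Counting dimensions as above then forces these $\tfrac12\dim(Gx)-l$ restricted functions, together with the $l$ basic invariants $H_1|_{Gx},\dots,H_l|_{Gx}$, to be a complete family; discarding the (locally constant) $H_j$ leaves the asserted $\tfrac12\dim(Gx)$ functions in involution.

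The main obstacle I expect is the last matching step: showing that it is exactly the range $d_i>k>d_i-s(i)$, and not some other window of $k$'s, that survives nontrivially on $Gx$. This is where one must invoke the precise description of $\eus F_x$ (equivalently $\eus F_e$) and the behaviour of the $\partial_a^kH_i$ under the one-parameter contraction to the associated cone from \cite[Sect.~2 and 4]{m-y}; in types {\sf A} and {\sf C} the characteristic-coefficient invariants make this transparent because one has closed formulas for the $s(i)$ in terms of the partition, but verifying that no ``collapse'' of independence occurs inside the claimed window — i.e. that the degeneration is as clean as stated — is the delicate point. Everything else (the cardinality count, reduction to the nilpotent orbit, and the conclusion via Lemma~\ref{comp-semi} and Corollary~\ref{cl-nilp}) is routine.
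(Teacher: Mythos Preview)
Your architecture (reduce to $Ge$, identify the surviving generators, count) matches the paper, but the mechanism you offer for the identification step does not work, and your count is internally inconsistent.

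There is no ``degeneration $\partial_a^kH_i\rightsquigarrow\partial_e^kH_i$ under the contraction''; the functions $\partial_a^kH_i$ are fixed polynomials on $\g^*$ and do not change as one contracts $Gx$ to $Ge$. The paper's key input is the duality identity~\eqref{ref-p},
\[
   \textsl{d}_e(\partial_a^k H_i)\ \propto\ \textsl{d}_a(\partial_e^{\,d_i-k-1}H_i),
\]
which pairs $k$ with $d_i-k-1$, not with $k$. Feeding in $\partial_e^{j}H_i=0$ for $j>s(i)$ and $\partial_e^{s(i)}H_i\in\gS(\g^e)$ gives at once that $\textsl{d}_{e'}(\partial_a^kH_i)\in\g^{e'}$ for every $e'\in Ge$ whenever $k\le d_i-s(i)-1$; by homogeneity these $\partial_a^kH_i$ then restrict to zero on $Ge$. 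The surviving generators number $\sum_i s(i)=\trdeg\eus F_e-l=\tfrac12\dim(Ge)$, and since $\trdeg(\eus F_a|_{Ge})=\tfrac12\dim(Ge)$ by hypothesis they are forced to be algebraically independent on $Ge$. No separate ``no-collapse'' verification inside the window is required: the vanishing of the complementary set does all the work, and Proposition~\ref{lm-c-n} transfers independence to $Gx$.

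Your counting paragraph is also broken. You compute $\sum_i(s(i)-1)=\tfrac12\dim(Gx)-l$ elements, then add the $l$ invariants $H_j|_{Gx}$---which are \emph{constants} on the orbit and contribute nothing to the transcendence degree---and conclude there are $\tfrac12\dim(Gx)$ independent functions. That is incoherent. (In fact the proof shows the surviving range is $d_i>k\ge d_i-s(i)$, giving $\sum_i s(i)=\tfrac12\dim(Ge)$ elements; the strict inequality in the statement looks like an off-by-one slip, and the right response is to flag the mismatch, not to patch it with constants.)
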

\begin{proof}
By virtue of Proposition~\ref{lm-c-n}, it suffices to prove the assertion  for $Ge$. 
According to~\eqref{ref-p}, the differential $\textsl{d}_e(\partial_a^k H_i)$ is equal to 
$\textsl{d}_a (\partial_e^{d_i-k-1} H_i)$ up to a non-zero rational scalar.   
If $d_i{-}k{-}1>s(i)$, then $\partial_e^{d_i-k-1} H_i=0$ and hence also $\textsl{d}_e(\partial_a^k H_i)=0$;  
if  $d_i{-}k{-}1=s(i)$, then $\textsl{d}_e(\partial_a^k H_i)\in \g^e$. 
The same statements for the differentials   hold at each point  $e'\in Ge$. 
If $F\in\gS(\g)$  and $\textsl{d}_{e'} F\in \g^{e'}$ for each $e'\in Ge$, 
then $F\vert_{Ge}$ is a constant,   if in addition $F$ is  homogeneous, then 
$F\vert_{Ge}=0$. 
Thus, the polynomials 
$\partial_a^k H_i$ with $k\le d_i-s(i)-1$ restrict to zero on $Ge$.  
The number of the remaining elements, $\partial_a^k H_i$ with 
$d_i>k > d_i-s(i)$,  is  equal to 
$\trdeg\eus F_e-l=\frac{1}{2}\dim(Ge)$, see \cite[Sect.~2]{m-y}.
\end{proof} 

\begin{ex}                \label{s(i)-A} 
Take $\g=\gln$. A nilpotent orbit $Ge\subset\g$ is determined by a
partition  $\boldsymbol{r}=(r_1,\ldots,r_t)$ of $n$, where $r_1\ge r_2\ge\ldots \ge r_t>0$ are the sizes of Jordan blocks of $e$. We then set $\co(\boldsymbol{r}):=Ge$.
The numbers $s(i)$ appeared in 
\cite[Thm~4.2]{trio} as the degrees of certain generators of $\gS(\g^e)^{\g^e}$, cf.~\cite[Lemma~1.5]{m-y}. They  are uniquely defined by the conditions  
\[
     \sum_{j=1}^{s(i)-1} r_j < i \le \sum_{j=1}^{s(i)} r_j. 
\]
To give a graphic presentation of the complete family of Proposition~\ref{complete}, 
we first arrange the polynomials $\partial_{a}^k H_i$ into the left justified Young tableau, where 
$H_n,\ldots,H_1$ form the first (top) row, $\partial_{a} H_n,\ldots,\partial_{a} H_2$ --- the second row, and so on until the last  (bottom)  row, where just $\partial_{a}^{n-1} H_n$ stands in the left column. The resulting diagram has consecutive rows of size $(n,n-1,\dots,1)$, hence it has 
$n(n+1)/2= \bb(\g)$ boxes.

Next, we define a certain colour pattern corresponding to $\co(\boldsymbol{r})$. This pattern is going to be used in Section~\ref{sec-A}. The recipe is the following: 
\begin{itemize}
\item[{\color{darkblue}$\diamond$}] in the top row paint the last (looking from the left) $r_1$ boxes in red and all boxes below them in green; 
\item[{\color{darkblue}$\diamond$}] in the second row find the  rightmost box that is not green, starting from it make 
a stripe of red boxes of length $r_2$, paint all the boxes below the stripe in  green; 
\item[{\color{darkblue}$\diamond$}]  if the first $m-1$ rows are painted and $r_m>0$, then 
find the rightmost box in the $m$-th row that is not green; starting from it make 
a stripe of red boxes of length $r_m$, and paint all the boxes below the stripe in green. 
\end{itemize}
The green boxes depict the complete family of Proposition~\ref{complete} 
and therefore there are $\frac{1}{2}\dim(Ge)$ of them. It is easily seen that we have $n$ red boxes.  
These boxes are going to be used in Section~\ref{sec-A}.
\\ \indent
The colour patterns corresponding to the partitions
 $(3,2,1)$, $(4,1)$, and $(2,2,2,1)$ are presented below. 

\vskip1.5ex
\ytableausetup{mathmode, boxsize=1.2em}
\quad\begin{ytableau}
*(white)& *(white)& *(white)& *(red) & *(red)& *(red)\\
*(white)& *(red) & *(red) & *(green) &*(green) \\
*(red) & *(green) &*(green) &*(green) \\
*(green) &*(green) &*(green) \\
*(green) &*(green) \\
*(green) \\
\end{ytableau} \qquad\quad
\begin{ytableau}
*(white)& *(red) & *(red)& *(red) & *(red)\\
*(red) & *(green) &*(green) &*(green) \\
*(green) &*(green) &*(green) \\
*(green) &*(green) \\
*(green) \\
\end{ytableau}
\qquad\quad
\begin{ytableau}
*(white)&*(white)&*(white)& *(white)& *(white)& *(red) & *(red)\\
*(white)& *(white)& *(white)& *(red) & *(red) & *(green)\\
*(white)& *(red) & *(red) & *(green) & *(green)\\
*(red) & *(green) &*(green) &*(green) \\
*(green) &*(green) &*(green) \\
*(green) &*(green) \\
*(green) \\
\end{ytableau}
\end{ex}

\section{Flag varieties and coisotropic actions } 
\label{sec-flag}

\noindent
Suppose for a while that $G$ is a {\bf complex} reductive group. 
Let $B\subset G$ be a Borel subgroup, 
$T(\mathbb C)\subset B$  a maximal torus in $G$, $P\subset G$ 
a parabolic containing $B$. Fix also a maximal compact subgroup $K\subset G$ such that 
$T=K\cap T(\mathbb C)$ is a maximal torus in $K$. 
Set $\gt k=\Lie K$, $\gt t=\Lie T$. Let further $V_\lb$ be a finite-dimensional 
simple $G$-module with a highest weight vector $v_\lambda$. 
Standard facts are that $G/B \simeq K/T$ and $G/P\simeq K/L$, where 
$L=P\cap K$, and the (real) symplectic structure on 
$G/P =G\langle v_\lambda\rangle \subset \mathbb PV_\lambda$ is the same as on the (co)adjoint orbit 
$K \lambda \subset \gt k^*$. This is one of the reasons, why {\it integrable systems} ($\sim$ complete families in involution) on adjoint  orbits of compact groups are of particular interest.  

Definition~\ref{com-fam} can be reformulated  for any symplectic manifold or variety $M$. 
If $M$ is not algebraic, then one has to consider smooth (or differentiable) functions and 
replace ``algebraically independent" with ``functionally independent". 
In what follows, we write simply ``a complete family" instead of ``a complete family in involution". 
Strictly speaking, an integrable system includes also a choice of a {\it Hamiltonian}, a function 
$\boldsymbol{H}$ on $M$ that Poisson-commutes with a complete family. Fortunately,
an arbitrary element of a complete family can be chosen as $\boldsymbol{H}$. 

The most famous example of a complete family on a flag variety is the Gelfand--Tsetlin 
system of  Guillemin--Sternberg in the ${\rm U}_n$-case~\cite{gs1},
the $\bl$-system in our terminology, 
see the Introduction and  Section~\ref{sec-A} for its description. 
There is also a direct analogue in the orthogonal case \cite{gs2} and a  symplectic 
variation due to Harada~\cite{Meg}. We demonstrate below that MF-subalgebras 
lead to integrable systems on flag varieties. Our construction is independent of the type of $G$. 

Although we have assumed  so far that $\overline\bbk=\bbk$,  
MF-subalgebras can be defined in the same way 
over $\mathbb Q$ for the rational forms of $\g$, as well as for the real forms. 
In particular, the method works for $\gt k$. This was already clear to Mishchenko and Fomenko \cite{mf}. 
Observe that $\gS(\gt k)^{\gt k}\otimes_{\mathbb R}\mathbb C=\gS(\g)^{\g}$.
Choose 
a parameter $a\in\ka^*$ and let $\eus F_a\subset \gS(\gt k)$ be the MF-subalgebra associated with $a$. 
Then $\eus F_a(\mathbb C)=\eus F_a\otimes_{\mathbb R}\mathbb C$ is the complex MF-subalgebra of 
$\gS(\g)$ associated with $a$, where $a$ is regarded as a complex valued linear function on $\g$.

Let $\{e,h,f\}\subset \g$ be a principal $\gt{sl}_2$-triple such that
\[
 \left<e,h,f\right>_{\BC}\cap \gt k = \left<ih, f-e, if+ie\right>_{\BR} \ \text{ and } \ ih\in\gt t .
\]
Note that  $\g^x=\ka^x\otimes_{\BR}\BC$ for any $x\in\ka^*$. Hence 
$\ka^*_{\sf reg}\subset \g^*_{\sf reg}$.

\begin{prop}     \label{MF-flag}
Take $a\in\gt k^*_{\sf reg}$. Then the real MF-subalgebra $\eus F_a$  is complete on  
any orbit $Kx\in\gt k^*$  and therefore on any  flag variety $G/P=G\langle v_\lambda\rangle$. 
If we choose $a=f-e\in\gt k\simeq \gt k^*$, then 
$\dim\textsl{d}_x ({\eus F_a}\vert_{Kx})=\frac{1}{2}\dim(Kx)$ for every $x\in\gt t^*$.
\end{prop}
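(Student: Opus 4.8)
The plan is to deduce the real completeness statement from the complex case and then verify the sharper equality for $a=f-e$ directly. First I would observe that $\eus F_a(\BC)=\eus F_a\otimes_\BR\BC$ is the complex MF-subalgebra attached to $a\in\ka^*_{\sf reg}\subset\g^*_{\sf reg}$, and that for $x\in\ka^*$ the complexified coadjoint orbit $G(x\otimes 1)\subset\g^*$ contains $Kx$ as a totally real form whose symplectic structure is the restriction of the complex one. Since the rank of a pencil of skew-symmetric forms is insensitive to field extension, $\dim\textsl{d}_x\eus F_a$ computed over $\BR$ coincides with $\dim\textsl{d}_x\eus F_{a}(\BC)$ computed over $\BC$; likewise $\dim(Kx)=\dim_\BC(Gx)$. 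So it suffices to prove the complex statement: for $a\in\ka^*_{\sf reg}$ and any $x\in\ka^*$, $\eus F_a(\BC)$ is complete on $Gx$. By Theorem~\ref{thm-s}, this already holds when $x$ is semisimple — and every $x\in\ka^*$ \emph{is} semisimple, because $\ka$ is a compact real form, so $\ad x$ acts with purely imaginary eigenvalues and is therefore semisimple. Hence the first assertion follows immediately from Theorem~\ref{thm-s}, and the statement about flag varieties follows from the identification $G/P\simeq K\lambda\subset\ka^*$ recalled just before the proposition, taking $x=\lambda\in\gt t^*$.

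For the sharper claim with $a=f-e$, the point is to upgrade ``complete'' (an inequality that is an equality) to the exact computation $\dim\textsl{d}_x(\eus F_a|_{Kx})=\tfrac12\dim(Kx)$ for $x\in\gt t^*$, i.e.\ to show there is no excess. Working again over $\BC$, I would invoke Lemma~\ref{comp-semi}: since $a\in\ka^*_{\sf reg}\subset\g^*_{\sf reg}$, for each $x$ we have $\dim(\textsl{d}_x\eus F_a\cap\g^x)=l$, and $\eus F_a$ is complete on $Gx$ iff conditions {\sf (i)} and {\sf (ii)} hold for a suitable representative. By Corollary~\ref{equiv} and the discussion around Corollary~\ref{cl-MF}, once {\sf (i)} and {\sf (ii)} hold we get the \emph{equality} $\dim\textsl{d}_x\eus F_a=l+\tfrac12\dim(Gx)$. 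Now $\textsl{d}_x(\eus F_a|_{Kx})$ is the image of $\textsl{d}_x\eus F_a$ under the projection $\psi_x$ with kernel $\g^x$ (complexified), so its dimension is $\dim\textsl{d}_x\eus F_a-\dim(\textsl{d}_x\eus F_a\cap\g^x)=l+\tfrac12\dim(Gx)-l=\tfrac12\dim(Gx)=\tfrac12\dim(Kx)$. So the sharper statement is \emph{equivalent} to checking {\sf (i)} and {\sf (ii)} for $x\in\gt t^*$ and $a=f-e$.

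To check these, I would imitate the proof of Theorem~\ref{thm-s}({\sf ii}), using the specific triple $\{e,h,f\}$ fixed before the proposition, for which $ih\in\gt t$ and $a=f-e\in\ka$. Take $x\in\gt t$. Then $\g^x=\el$ is a (standard, with respect to the Cartan $\gt t\otimes\BC$) Levi subalgebra. For {\sf (ii)}, I must show $\bar a=a|_\el\in\el^*_{\sf reg}$; equivalently, after the fixed identification, that $\bar f:=f|_\el$ together with the remaining summand of $a$ lands in $\el_{\sf reg}$. The key input is that $\bar f$ is a \emph{regular nilpotent element of $\el$} — this holds because $\{e,h,f\}$ is principal in $\g$ and $\gt t\otimes\BC\subset\g^h$, so the principal triple restricts to a principal triple of $\el$ with semisimple element in $\gt t\otimes\BC$; then $\bar e:=e|_\el$ generates the open $B\cap L$-orbit and, by Kostant's result $\bar f+(\el\cap\be)\subset\el_{\sf reg}$ (where $\be$ is the Borel with $\be\supset\gt t\otimes\BC$ and $e\in\be$), together with $-\bar e\in\el\cap\be$, we get $\bar a=\bar f-\bar e\in\el_{\sf reg}$. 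For {\sf (i)}, a generic element of $\langle a,x\rangle_\BC$ is $\ap(f-e)+\beta x=\ap f+(-\ap e+\beta x)$ with $-\ap e+\beta x\in\be$; when $\ap\ne0$ this lies in $f+\be\subset\g_{\sf reg}$ by Kostant, so $\langle a,x\rangle_\BC\cap\g^*_{\sf sing}\subset\BC x$. The main obstacle, such as it is, is bookkeeping: making sure that ``principal triple of $\g$ restricts to a principal triple of the Levi $\g^x$ with semisimple part in the Cartan'' is set up cleanly for the \emph{particular} triple with $ih\in\gt t$, and that $f-e$ (rather than the $f+y$ of Theorem~\ref{thm-s}) still fits the $f+\be$ pattern — both are routine once one writes $-e\in\be$. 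No genuinely new difficulty arises beyond Theorem~\ref{thm-s} and Lemma~\ref{comp-semi}.
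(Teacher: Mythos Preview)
Your approach is essentially the paper's: reduce the first claim to the complex statement via Theorem~\ref{thm-s} (using that every $x\in\ka^*$ is semisimple), and for $a=f-e$ observe that the \emph{proof} of Theorem~\ref{thm-s} already yields the exact equality $\dim\textsl{d}_x\eus F_a=l+\tfrac12\dim(Gx)$ at the chosen point $x\in\gt t$, not merely at some conjugate.

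There is one small gap in your reduction. Your ``so it suffices to prove the complex statement'' is not yet justified: completeness of $\eus F_a(\BC)$ on $Gx$ only tells you that the locus where the restricted differential has maximal dimension is a nonempty Zariski-open $U\subset Gx$, and you need $U\cap Kx\ne\varnothing$. The pointwise equality $\dim_\BR\textsl{d}_y\eus F_a=\dim_\BC\textsl{d}_y\eus F_a(\BC)$ for $y\in Kx$ does not by itself force the maximum over $Kx$ to agree with the maximum over $Gx$. The paper supplies the missing step by noting that $Kx$ is Zariski-dense in $Gx$, hence meets $U$.

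Your explicit verification of {\sf(i)} and {\sf(ii)} for $a=f-e$ and $x\in\gt t$ is correct and in fact more detailed than the paper, which simply refers back to ``the proof of Theorem~\ref{thm-s}''. You can shortcut it further: since $[e,e]=0$ one has $-e\in\g^e$, so $f-e\in\eus K=f+\g^e$; and $ih\in\gt t$ gives $\gt t\otimes\BC=\g^h=\te$, so $x\in\te$. Thus the pair $(a,x)=(f-e,x)$ sits exactly in the configuration of Theorem~\ref{thm-s}'s proof with no conjugation or adaptation required.
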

\begin{proof}
All elements of $\ka$ are semisimple. By Theorem~\ref{thm-s}, $\eus F_{a}(\mathbb C)$ is complete on 
$G x\subset\g^*$ if $a\in\gt k^*_{\sf reg}$ and
$x\in\gt k^*$.  The equality $\dim\textsl{d}_y ({\eus F_a(\mathbb C)}\vert_{Gx})=\frac{1}{2}\dim(Gx)$ holds 
for each $y\in U$, where $U\subset Gx$ is a non-empty Zariski open subset.  In the complex Zariski 
topology,  $Kx$ is dense in $Gx$. Hence $U\cap Kx\ne\varnothing$.  
 By a standard linear algebra argument, for any $x\in\gt k^*$,
we have  $\textsl{d}_x \eus F_a\otimes_{\mathbb R}\mathbb C =\textsl{d}_x \eus F_a(\mathbb C)$. Thus, 
$\eus F_a$ is complete on $Kx$. 

If $a=f-e$ and $x\in\gt t^*$, then   $\dim\textsl{d}_{x} (\eus F_a(\BC)\vert_{Gx})=\frac{1}{2}\dim(Gx)$
according to the proof of Theorem~\ref{thm-s}.  
Hence here $\dim_{\mathbb R}\textsl{d}_{x} (\eus F_a \vert_{K x})=\frac{1}{2}\dim_{\mathbb R}(K x)$.
\end{proof}

The Gelfand--Tsetlin  system of Guillemin--Sternberg 
is complete on each adjoint orbit of ${\rm U}_n$.  
The key point here is that the action of ${\rm U}_{n-1}$ on a (co)adjoint orbit of 
${\rm U}_n$ is {\it coisotropic}, which is formulated in~\cite{gs2}. 
Guillemin and Sternberg prove this assertion  if the orbit in question is regular, 
the non-regular case being illustrated through examples. The statement, for both ${\rm U}_n$ 
and ${\rm SO}_n(\mathbb R)$, is attributed to Heckman \cite{Hek}, see e.g.~\cite[p.\,225]{gs2}. 
Below, we give a modern perspective on the matter  and show that the non-regular case follows easily from the regular one.   

\subsection{Coisotropic actions} 
\label{subsec-cois} 
The  symplectic manifolds (or varieties) $(M,\omega)$ endowed with a coisotropic action of a group are 
also known as  the ``multiplicity-free spaces" \cite{gs-free,Alan}. The starting point is a {\it Hamiltonian 
action} of a group $Q$ on $M$, see e.g.~\cite[Sect.\,2]{gs-moment} for the definition. In this section, we
assume that either $M$ is a smooth variety over $\bbk$ and $Q$ is an affine algebraic group defined 
over $\bbk$  or $M$ is a homogeneous space of a compact real group $K$ and  
$Q$ is a compact real group. In both cases, $M$ is assumed to be irreducible.  

Associated with the Hamiltonian action of $Q$, there is a {\it moment map} $\mu=\mu_Q\!: M\to \q^*$, 
see~\cite[Sect.\,3]{gs-moment}. In this paper, we are interested only in cases, where the moment 
map is defined globally. 
The elements of $\mu^*(\gS(\q))$ are functions on $M$ and they are called either 
{\it Noether integrals} or {\it collective functions}.   
We have either $\mu^*(\gS(\q))\subset \bbk[M]$ or $\mu^*(\gS(\q))\subset \mathbb R[M]$, depending on the context.  The name ``Noether integrals" is justified by the following theorem of 
Emmy  Noether:  $\{F,\mu^*({\gS}(\q))\}=0$ for each $Q$-invariant function $F$ on $M$.
The term ``collective functions" is introduced in~\cite{gs2}. 

Let $\bL$ denote either  $\bbk$ or $\mathbb R$. Write $\bL(M)^Q$ for the field of $Q$-invariant rational functions on $M$.
For $x\in M$, set $\gt q x=T_x(Qx)$. 

\begin{df}          \label{df-cois}
A Hamiltonian action of $Q$ on $M$ is {\it coisotropic} if
$(\q x)^{\perp}\subset (\q x)$ for generic $x\in M$, where the orthogonal complement is taken w.r.t.
the symplectic form $\omega_x$. 
\end{df}

Since  $\omega_x$ is non-degenerate, the condition $(\q x)^{\perp}\subset (\q x)$ is equivalent to that
\begin{equation}      \label{cois-2}
       \omega_x \ \text{ vanishes on } \ (\q x)^{\perp}.
\end{equation} 
There are many  equivalent conditions that define coisotropic actions, see e.g.~\cite[Sect.\,2]{gs2}.
Some of them are presented below. 
 
The Poisson structure $\pi$ on $M$ is given by $\pi(x)=(\omega_x^{-1})^t$ at $x\in M$. 
Here $\omega_x$ is a skew-symmetric form on $T_xM$ and $\pi(x)$ is a skew-symmetric
form on $T^*_x M$. 
By duality between $\omega$ and $\pi$, we have 
\begin{equation}\label{cois-3}
{\omega_x}\vert_{(\gt q x)^{\perp}}=0 \ \Longleftrightarrow \ \pi(x)\vert_{\Ann(\gt q x)}=0.
\end{equation}  
Let $F$ be a $Q$-invariant rational function on $M$ such  that $\textsl{d}_x F$ is defined. 
Then $\textsl{d}_x F$ vanishes on $\q x$, i.e., $\textsl{d}_x F\in\Ann(\gt q x)$. 
By the Rosenlicht theorem, see e.g.~\cite[Thm~2.3]{VP}, the
rational $Q$-invariants on $M$ separate generic $Q$-orbits. Hence there is a non-empty subset 
$U\subset M$ such that for each $y\in U$ there are rational functions  
$F_1,\ldots,F_m\in\bL(M)^Q$ satisfying $\langle F_i \mid 1\le i\le m\rangle_{\bL}=\Ann(\gt q y)$. 
Therefore  \eqref{cois-2} holds generically if and only if  
\begin{equation}        \label{cois-4}
    \bL(M)^Q  \ \text{ is Poisson commutative.} 
\end{equation} 
If $\trdeg\bL[M]^Q=\trdeg\bL(M)^Q$, then \eqref{cois-4} is equivalent to 
\begin{equation}        \label{cois-5}
    \bL[M]^Q  \ \text{ is Poisson commutative.} 
\end{equation} 
Note that in the compact setting, the regular invariants $\mathbb R[M]^Q$ separate 
all $Q$-orbits. Further conditions involve $\mu$. 
 
Observe that \ $\ker(\textsl{d}_x\mu) =(\q x)^\perp$, see e.g.~\cite[Eq.~(1.6)]{gs2} or  
\cite[Eq.~(56)]{vin-com}.  Thus, 
\beq      \label{cois-6}
    (\q x)^\perp \subset \q x \ \Longleftrightarrow \   (\textsl{d}_x\mu)^{-1}(\q \mu(x))=\q x. 
\eeq 
Suppose that $\dim M=2n$ and $F_1,\ldots,F_n$ is a complete family on $M$ 
consisting of Noether integrals, i.e., $F_i\in {\rm Im\,} \mu^*$ for each $i$. 
For $x\in M$, set $L(x)=\langle\textsl{d}_x F_i\mid 1\le i\le n\rangle_{\bL}$.
Then ${\pi(x)}\vert_{L(x)}=0$ and $L(x)$ is ortogonal to $\textsl{d}_x(\bL(M)^Q)$ w.r.t. $\pi(x)$. 
If $x$ is generic, then $L(x)$ is a Lagrangian subspace of $T_x^* M$ w.r.t. $\pi(x)$ and 
$\textsl{d}_x(\bL(M)^Q)=\Ann(\gt q x)$.  For such an $x$, we have $\Ann(\q x)\subset L(x)$ and hence 
$\pi(x)$ vanishes on $\Ann(\gt q x)$. Therefore, it follows from~\eqref{cois-3}, see also the theorem 
in~\cite[Sect.\,2]{gs2}, that the following assertion is true:
\begin{itemize}
\item[{\sf(NF)}]
there is a complete family on $M$ consisting of Noether integrals 
only if the action of $Q$ on $M$ is coisotropic. 
\end{itemize}

\begin{thm}[\cite{gs2}]        \label{thm-cois1}
The action of\/ ${\rm U}_{n-1}$ on any adjoint orbit of\/ ${\rm U}_n$ is coisotropic.
\end{thm}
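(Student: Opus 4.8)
The plan is to deduce the coisotropy of the $\mathrm{U}_{n-1}$-action on an arbitrary adjoint orbit $\mathcal{O}=\mathrm{U}_n\lb\subset\gt u_n^*$ from Proposition~\ref{MF-flag}. The key observation is that the Gelfand--Tsetlin/$\bl$-system on $\mathcal{O}$ is built entirely from Noether integrals for the $\mathrm{U}_{n-1}$-action: the eigenvalue functions $\bl_k^{[m]}$ come from pulling back, via the moment map $\mu\colon\gt u_n^*\to\gt u_{n-m}^*$, the invariants in $\mathbb{R}[\gt u_{n-m}^*]^{\mathrm{U}_{n-m}}$, and in particular all of them except the top layer (the fixed eigenvalues of $\lb$ itself) are pulled back from $\gt u_{n-1}^*$ under the single projection $\mu=\mu_{\mathrm{U}_{n-1}}$. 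So by the argument-shift realisation of the GT-algebra in type $\mathsf{A}$ recorded in the Introduction (the $\bl$-system being generated by the restrictions of the MF-subalgebra $\eus F_a$ with $a=f-e\in\gt u_n^*$, up to the passage to eigenvalues), all the non-constant members of the complete family of Proposition~\ref{MF-flag} lie in $\mu^*(\gS(\gt u_{n-1}))$.

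First I would fix $x\in\gt u_n^*$ in the conjugacy class of $\lb$ and recall from Proposition~\ref{MF-flag} that, for $a=f-e$, one has $\dim\textsl{d}_x(\eus F_a|_{\mathrm{U}_n x})=\tfrac12\dim(\mathrm{U}_n x)$ at every $x\in\gt t^*$; that is, the restricted MF-algebra furnishes a complete family on $\mathcal{O}$. Next I would check that (after discarding the redundant generators on $\mathcal{O}$, cf.\ the discussion in Section~\ref{subs:complete}) the surviving generators are exactly the collective functions $\mu_{\mathrm{U}_{n-1}}^*(H)$ with $H$ running over the generating invariants of $\gS(\gt u_{n-1})^{\gt u_{n-1}}$ shifted in the $\bar a$-direction --- equivalently, the functions $\bl_k^{[m]}$ with $m\ge 1$ --- these being precisely the ones not restricting to constants on the fibre-type stratum containing $\mathcal{O}$. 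Hence $M=\mathcal{O}$ carries a complete family consisting of Noether integrals for $Q=\mathrm{U}_{n-1}$.

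Now I would invoke implication \textsf{(NF)} established just above the theorem: the existence of a complete family of Noether integrals forces the $Q$-action to be coisotropic. This closes the argument. Alternatively, and perhaps more cleanly, one can argue directly: by \eqref{cois-4} it suffices to show $\mathbb{R}(\mathcal{O})^{\mathrm{U}_{n-1}}$ is Poisson-commutative, and since $\trdeg\mathbb{R}[\mathcal{O}]^{\mathrm{U}_{n-1}}=\trdeg\mathbb{R}(\mathcal{O})^{\mathrm{U}_{n-1}}$ on a projective-type quotient, by \eqref{cois-5} it suffices to show $\mathbb{R}[\mathcal{O}]^{\mathrm{U}_{n-1}}$ is Poisson-commutative; but the collective functions $\mu_{\mathrm{U}_{n-1}}^*(\gS(\gt u_{n-1})^{\gt u_{n-1}})$ are $\mathrm{U}_{n-1}$-invariant and Poisson-commute with everything collective (they even lie in $\eus F_a$, which is Poisson-commutative), and a dimension count using Proposition~\ref{MF-flag} shows they generate a subalgebra of the right transcendence degree $\tfrac12\dim\mathcal{O}-\dim(\mathrm{U}_{n-1}\text{-generic stabiliser contribution})$ to fill out $\mathbb{R}[\mathcal{O}]^{\mathrm{U}_{n-1}}$; invariance of these functions then propagates Poisson-commutativity to all of $\mathbb{R}[\mathcal{O}]^{\mathrm{U}_{n-1}}$.

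\textbf{Main obstacle.} The delicate point is bookkeeping: one must verify carefully that the ``surviving'' generators of $\eus F_a|_{\mathcal{O}}$ really are collective functions for $\mathrm{U}_{n-1}$ (and not just for the bigger chain), i.e.\ that the top layer of the GT picture --- the eigenvalues of $\lb$ itself --- is the only part that becomes constant on $\mathcal{O}$, while every lower layer genuinely factors through $\mu_{\mathrm{U}_{n-1}}$ and genuinely contributes to the transcendence degree. For the regular orbits this is exactly the classical Guillemin--Sternberg computation; the content here is that the contraction/sheet argument of Section~\ref{subs:complete}, or equivalently the explicit redundancy pattern of Example~\ref{s(i)-A}, guarantees the same layer-by-layer structure survives verbatim on a non-regular orbit, so that the non-regular case is reduced to the regular one without any new representation-theoretic input. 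I expect this reduction step --- making precise that ``the non-regular case follows from the regular one'' --- to be where the real work lies, the rest being a formal appeal to \textsf{(NF)} and \eqref{cois-4}--\eqref{cois-5}.
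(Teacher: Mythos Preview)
There is a genuine gap. Your central claim---that the non-constant members of the complete family furnished by Proposition~\ref{MF-flag} are Noether integrals for $\mathrm{U}_{n-1}$, i.e.\ lie in $\mu^*(\gS(\gt u_{n-1}))$---is false. The generators of $\eus F_a$ are the directional derivatives $\partial_a^k H_i$ of the $\mathrm{U}_n$-invariants $H_i$, and these are honest polynomials on $\gt u_n^*$ involving \emph{all} matrix entries; they do not factor through the projection $\gt u_n^*\to\gt u_{n-1}^*$. You are conflating the MF-subalgebra $\eus F_a$ (for a fixed $a$) with the GT-subalgebra $\eus C$. The Introduction does not assert that the $\bl$-system arises from a single $\eus F_a$; what is true (Remark~\ref{rmk-limits}) is that $\eus C$ is a \emph{limit} $\lim_{t\to 0}\eus F_{a(t)}$ of a one-parameter family, and only in that limit do the generators degenerate to the $\Delta_k^{[m]}$, which for $m\ge 1$ genuinely lie in $\gS(\gt{gl}_{n-1})$. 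The redundancy pattern of Section~\ref{subs:complete} and Example~\ref{s(i)-A} tells you which $\partial_a^k H_i$ survive on $\co$, but survival on $\co$ has nothing to do with membership in $\gS(\gt u_{n-1})$. So the appeal to {\sf(NF)} cannot be made with $\eus F_a$, and the parenthetical ``they even lie in $\eus F_a$'' in your alternative route is equally wrong.

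The paper's proof takes a completely different and much shorter route: for \emph{regular} orbits $M$, one quotes from \cite{gs2} that generic $y\in\mu(M)$ is regular in $\gt u_{n-1}^*$ and that the stabiliser $(\mathrm{U}_{n-1})_y$ acts transitively on $\mu^{-1}(y)$; combined with \eqref{cois-6} this gives coisotropy. Since the regular orbits are dense in $\gt u_n^*$, this forces $\gS(\gt u_n)^{\mathrm{U}_{n-1}}$ itself to be Poisson-commutative as a subalgebra of $\gS(\gt u_n)$. For an arbitrary orbit $M$, compactness of $\mathrm{U}_{n-1}$ gives $\BR(M)^{\mathrm{U}_{n-1}}=\mathrm{Quot}(\BR[M]^{\mathrm{U}_{n-1}})$ and $\BR[M]^{\mathrm{U}_{n-1}}=(\gS(\gt u_n)^{\mathrm{U}_{n-1}})|_M$, hence $\BR[M]^{\mathrm{U}_{n-1}}$ is Poisson-commutative and \eqref{cois-4}--\eqref{cois-5} finish. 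The passage from regular to arbitrary orbits is thus a one-line restriction argument on the \emph{ambient} invariant ring, not a layer-by-layer analysis on each orbit. Your intuition that ``the non-regular case follows from the regular one'' is exactly right, but the mechanism is Poisson-commutativity of $\gS(\gt u_n)^{\mathrm{U}_{n-1}}$, not any property of $\eus F_a$. (The route you were reaching for---deducing coisotropy from completeness of a family of Noether integrals---does work if one uses $\eus C$ instead of $\eus F_a$; that is precisely how the paper later proves the algebraic analogue, Theorem~\ref{thm-co}, but it requires the independent completeness result Theorem~\ref{gt-com}.)
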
 
\begin{proof}
Set $Q={\rm U}_{n-1}$, $M={\rm U}_n x\subset \gt u_n$. 
Suppose first that $M$ is a regular ${\rm U}_n$-orbit. Take $y\in \mu(M)$. Then $y$ is a regular 
point of  $\q$~\cite[Sect.\,4]{gs2} and 
$Q_y$ acts on $\mu^{-1}(y)$ transitively if $y\in \mu(M)$ is generic~\cite[Eq.\,(2.5)]{gs2}. 
Combining these facts with~\eqref{cois-6}, we obtain that 
\eqref{cois-2} holds at each point $x\in\mu^{-1}(y)$.  It is also true that  
$\{\gS(\gt u_n)^Q,\gS(\gt u_n)^Q\}$ vanishes on $M$, cf. \eqref{cois-5}. Since this holds for 
any regular orbit, $\gS(\gt u_n)^Q$ is Poisson-commutative. 

Next, let $M\subset \gt u_n$ be an arbitrary adjoint orbit. 
Since $Q$  is compact, $$\BR(M)^Q={\rm Quot}(\BR[M]^Q)$$ and $\BR[M]^Q$ is the restriction of
$\gS(\gt u_n)^Q$ to $M$. In particular, $\BR[M]^Q$ is Poisson-commutative and this implies 
that \eqref{cois-2} holds for genetic $x\in M$. 
\end{proof}

Theorem~\ref{thm-cois1} combined with an inductive argument of~\cite[(2.9)]{gs2}, 
yields the following assertion.  

\begin{cl}[\cite{gs2}]
The integrable system of\/ \cite{gs1},
the type {\sf A} $\bl$-system in our terminology,  is complete on any adjoint orbit of\/ ${\rm U}_n$.  
\end{cl}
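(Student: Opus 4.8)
The statement to prove is that the type {\sf A} $\bl$-system of \cite{gs1} is complete on every adjoint orbit of $\mathrm{U}_n$, given Theorem~\ref{thm-cois1} (that $\mathrm{U}_{n-1}$ acts coisotropically on every adjoint orbit of $\mathrm{U}_n$). The plan is to run the chain of subgroups $\mathrm{U}_n\supset\mathrm{U}_{n-1}\supset\dots\supset\mathrm{U}_1$ and build the complete family inductively, dimension by dimension, exactly along the lines of \cite[(2.9)]{gs2}. Fix an adjoint orbit $M=\mathrm{U}_n x\subset\gt u_n$, with its symplectic form $\omega$; put $Q=\mathrm{U}_{n-1}$ and let $\mu\colon M\to\q^*$ be the moment map, which here is just the composition of the inclusion $M\hookrightarrow\gt u_n^*\simeq\gt u_n$ with the orthogonal projection $\gt u_n\to\gt u_{n-1}$. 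Since $Q$ is compact, the fibres of $\mu$ meet generic $Q$-orbits in a single orbit, $\BR(M)^Q=\mathrm{Quot}(\BR[M]^Q)$, and $\BR[M]^Q$ is the restriction of $\gS(\gt u_{n-1})^Q$ to $M$.

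The inductive step is the following. Suppose one has already produced, on a generic $\mathrm{U}_{n-1}$-orbit in $\mu(M)\subset\gt u_{n-1}^*$, a complete family in involution consisting of collective functions (i.e. functions pulled back from $\gS(\gt u_{n-1})$ via the chain); this is the inductive hypothesis applied to the smaller group. Pull these functions back to $M$ through $\mu^*$. Emmy Noether's theorem says they Poisson-commute with all of $\BR[M]^Q$, in particular with $\gS(\gt u_{n-1})^{\mathrm{U}_{n-1}}|_M$; and among themselves they still Poisson-commute because $\mu^*$ is a Poisson homomorphism. Adjoin to this collection the restrictions to $M$ of a generating set of $\gS(\gt u_{n-1})^{\mathrm{U}_{n-1}}$ — the $n-1$ coefficients of the characteristic polynomial of the $(n-1)\times(n-1)$ block. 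These are $Q$-invariant, hence (by coisotropy, i.e. \eqref{cois-5} for $Q$ acting on $M$) Poisson-commute with each other, and by Noether's theorem commute with the pulled-back collective functions. So the whole assembled set is in involution. The count: by coisotropy for $(Q,M)$ one has, for generic $x\in M$, $\Ann(\q x)\subset L(x)$ where $L(x)$ is spanned by the differentials of the collective functions together with those of $\BR[M]^Q$, and $L(x)$ is Lagrangian in $T_x^*M$; equivalently $\dim M=2\dim\mu(M)_{\mathrm{gen}\text{-}Q\text{-orbit}}$ plus twice... — more cleanly, the number of independent functions produced equals $\tfrac12\dim M$ because the new $\mathrm{U}_{n-1}$-invariants together with $\tfrac12$ the dimension of a generic $Q$-orbit in $\mu(M)$ (supplied inductively) exactly fill out a Lagrangian, using that a coisotropic action is in particular "multiplicity-free" so $\trdeg\BR(M)^Q=\dim M-\tfrac12\max\dim(Q x)$ and $\dim\mu(M)=\max\dim(Qx)$.

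The base of the induction is $\mathrm{U}_1$ acting on a point, where the empty family is complete; alternatively one starts the recursion at $\mathrm{U}_2\supset\mathrm{U}_1$, where the statement is immediate. The eigenvalue functions $\bl_k^{[m]}$ of the successive projected blocks $\gt u_{n-m}^*$ are, up to the usual symmetric-function change of coordinates, precisely the collective functions accumulated along the chain, so the family obtained is the $\bl$-system; thus it is complete on $M$. The main obstacle — and the point where Theorem~\ref{thm-cois1} is indispensable — is the transition from "coisotropic on regular orbits" (which is all Guillemin–Sternberg establish cleanly) to "coisotropic on an arbitrary orbit": it is exactly the coisotropy of $(\mathrm{U}_{n-1},M)$ for \emph{non-regular} $M$ that makes the newly adjoined $\mathrm{U}_{n-1}$-invariants Poisson-commute on $M$ and makes the dimension count close up to a full Lagrangian; without it the inductive step collapses. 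Everything else is bookkeeping of transcendence degrees and repeated application of Noether's theorem.
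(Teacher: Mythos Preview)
Your proposal is correct and follows exactly the inductive scheme of \cite[(2.9)]{gs2} that the paper invokes without elaboration (the same argument is written out in the orthogonal case in the proof of Theorem~\ref{orth}). One slip in your dimension count: $\trdeg\BR(M)^Q = \dim M - \max_{x}\dim(Qx)$, not $\dim M - \tfrac12\max_{x}\dim(Qx)$; the clean bookkeeping is that for generic $y\in\mu(M)$ the differentials of the $\gt u_{n-1}$-GT generators span a space of dimension $(\dim\mu(M)-r) + r/2 = \bb(\mu(M)) = \tfrac12\dim M$, the last equality being precisely coisotropy via~\eqref{eq-cork-pr}.
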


A similar inductive argument applies in the orthogonal case, too. Actually, Section~\ref{sub-strong} 
contains a thorough discussion of the fact that the action of  $\SO_{n-1}(\mathbb R)$ on every 
adjoint orbit of $\SO_n(\mathbb R)$ is coisotropic.  

The ``multiplicity-free spaces" of~\cite{gs-free,Alan} are related to multiplicity-free decompositions
and {\it spherical varieties}. An  algebraic $\bbk$-variety $X$ acted upon by a reductive group $G$ 
is said to be {\it spherical}, if a Borel subgroup $B\subset G$ acts on $X$ with an open orbit.  

Suppose that $M$ is K\"ahler and $Q$ is a compact real group. Then the action of $Q$ on $M$ is 
coisotropic if and only if 
\begin{itemize}
  \item[{\sf (SC)}] \ $M$ is a spherical $Q(\mathbb C)$-variety \cite[Sect.~6]{Alan}. 
\end{itemize}
A  complex flag variety $G/P$ is definitely K\"ahler.  Take $Q\subset  K\subset G$. 
If $G/B$ is spherical w.r.t. $Q(\mathbb C)$, then $G/P$ is also a spherical 
$Q(\mathbb C)$-variety for each parabolic $P$.  This is another way to see that if a generic
adjoint orbit of  $K$ is coisotropic w.r.t. $Q$, then each adjoint orbit of  $K$ is also $Q$-coisotropic.

\subsection{Strong Gelfand pairs}        \label{sub-strong}
Among pairs of reductive groups $H\subsetneq G$, two  occupy the most prominent position. 
These are the {\it strong Gelfand pairs} $(\GL_n(\bbk),\GL_{n-1}(\bbk))$ and 
$(\SO_n(\bbk),\SO_{n-1}(\bbk))$.  Up to local isomorphisms, products, products with $(H,H)$, and pairs 
$(\bbk^{\times}, \{e\})$,  these are the only strong Gelfand pairs, see \cite{Kr} and~\cite[Sect.~4]{Hek}. 

Strong Gelfand pairs can be characterised by a host of equivalent conditions. Below we present a selection of these conditions:
\begin{itemize}
\item[{\sf(Sph1)}] \ the homogeneous space 
$(G\times H)/H$ is a spherical $(G\times H)$-variety; 
\item[{\sf(Sph2)}] \ $G/B$ is a spherical $H$-variety; 
\item[{\sf(Br)}] \ each irreducible finite-dimensional representation $V_\lambda$ of 
$G$ decomposes without multiplicities under the  action of $H$;
\item[{\sf(Com)}] \ the algebra $\eus U(\g)^{\gt h}$ is  commutative; 
\item[{\sf(PCm)}] \ the algebra $\gS(\g)^{\gt h}$ is Poisson-commutative; 
\item[{\sf(Cois)}] \ the action of $H$ on each closed orbit $Gx\subset \g^*$ is coisotropic; 
\item[{\sf(DCn)}] \ $\gS(\g)^{\gt h}=\mathsf{alg}\langle \gS(\g)^{\g},\gS(\gt h)^{\gt h}   \rangle$; 
\item[{\sf(CtB)}] \ the action of $H$ on $T^*(G/P)$ is coisotropic for each parabolic $P\subset G$. 
\end{itemize}

It is a classical fact that the pairs $(\GL_n(\bbk),\GL_{n-1}(\bbk))$ and $(\SO_n(\bbk),\SO_{n-1}(\bbk))$ 
satisfy {\sf (Br)}. It took a long time and many papers to prove the equivalences of the above conditions. 
Below is a brief outline.  

\begin{rmk}    \label{SGP-cond}
[Arguments for the equivalences.] 
The fact that  {\sf(Sph1)} $\Leftrightarrow$ {\sf(Sph2)} is  observed in 
\cite{DA}, see Eq.~(5) on page 26 therein.   

Both equivalences  {\sf(Sph1)} $\Leftrightarrow$ {\sf(Br)} and   {\sf(Sph2)} $\Leftrightarrow$ {\sf(Br)}
are results of \cite{vin-k}. The action of $H$ on the flag variety 
$G\langle v_\lambda \rangle\subset \mathbb P V_\lambda$ is spherical if and only if 
each $V_{n\lambda}$ with $n\in\mathbb N$ decomposes without multiplicities under the action of $H$.
In the affine case, $(G\times H)/H$ is a spherical $(G\times H)$-variety if and only if 
$\dim(V_\lambda\otimes V_\mu)^H\le 1$ for all irreducible finite dimensional $G$-modules $V_\lambda$ 
and $H$-modules $V_\mu$.   

A simple proof for the equivalence {\sf(Br)} $\Leftrightarrow$ {\sf(Com)} is given in~\cite{john}.  

Since $\gS(\g)^{\gt h}=\gr(\eus U(\g)^{\gt h})$, we have {\sf(Com)} $\Rightarrow$ {\sf(PCm)}.  

The implication {\sf(PCm)} $\Rightarrow$ {\sf(Sph1)} can be extracted from 
the proof of \cite[Satz\,2.3]{Fr}, see the implication ($2' \Rightarrow 3$) therein. 
In \cite[Satz\,2.3]{Fr}, it is shown that  {\sf(PCm)} $\Leftrightarrow$ {\sf(Cois)} $\Leftrightarrow$ {\sf(DCn)}. That proof exploits the classification of strong Gelfand pairs. 
Below we give an alternative, classification-free argument, 
see Theorem~\ref{equiv-new}.   

Observe that the implication  {\sf(DCn)} $\Rightarrow$ {\sf(Com)} is almost trivial. 
Let $\varpi\!:\gS(\g)\to \eus U(\g)$ be the symmetrisation map. It is a homomorphism of $G$-modules. 
Thereby $\eus{U}(\g)^{\gt h}=\varpi(\gS(\g)^{\gt h})$. Suppose that {\sf(DCn)} holds. Then
$\gS(\g)^{\gt h}$ is generated by $\gS(\gt h)^{\gt h}$ as an $\gS(\g)^{\g}$-module.  
Therefore $\eus U(\g)^{\gt h}$ is generated by $\eus U(\gt h)^{\gt h}$ as a 
$\eus U(\g)^{\g}$-module.   
Since 
$[\eus U(\gt h)^{\gt h},\eus U(\gt h)^{\gt h}]=0$, the condition  {\sf(Com)}  holds. 

Finally, the equivalence {\sf(CtB)} $\Leftrightarrow$ {\sf(Sph2)} follows from \cite[Satz~7.1]{Fr-WM}, see 
also \cite[Chapter 2,\,\S 3]{vin-com} and in particular Theorem~2 therein.  
\end{rmk}

An open subset $U$ of an irreducible algebraic variety $X$ is said to be {\it big} if 
$\dim X\setminus U\le \dim X-2$. 

\begin{lm}             \label{lm-double} 
Let $H\subset G$ be a reductive subgroup. 
Set\/ $\eus C_1=\mathsf{alg}\langle \gS(\g)^{\g},\gS(\gt h)^{\gt h}   \rangle$. 
Then $\eus C_1$ is an algebraically closed subalgebra of\/ $\gS(\g)$.  
\end{lm}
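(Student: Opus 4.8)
The plan is to realise $\eus C_1$ as the coordinate ring of an affine variety and to invoke the standard invariant-theoretic criterion for when the pull-back of regular functions along a dominant morphism is an algebraically closed subalgebra.

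First I would reduce to an auxiliary subalgebra. Let $\rho\colon\g^*\to\h^*$ be the surjection dual to $\h\hookrightarrow\g$, so that $\rho^*$ identifies $\gS(\h)$ with a subalgebra of $\gS(\g)$; put $\ma^*=\ker\rho=\Ann_{\g^*}(\h)$ and $\cs_G=\spe\gS(\g)^\g$, with quotient map $\pi_G$. Set $\eus C_0=\mathsf{alg}\langle\gS(\g)^\g,\gS(\h)\rangle$. As $\gS(\g)^\g$ is $H$-fixed and $\gS(\h)$ is an $H$-submodule, $\eus C_0$ is $H$-stable, and since $H$ is reductive, taking $H$-invariants is exact, whence $\eus C_0^H=\gS(\g)^\g\cdot\gS(\h)^H=\eus C_1$. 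Because ``algebraically closed in'' is a transitive relation, it is enough to prove (a) that $\eus C_0$ is algebraically closed in $\gS(\g)$, and (b) that $\eus C_1=\eus C_0^H$ is algebraically closed in $\eus C_0$.

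For (a): write $\eus C_0\cong\bbk[Z_0]$, where $Z_0=\overline{\Psi(\g^*)}$ (reduced) for $\Psi=(\pi_G,\rho)\colon\g^*\to\cs_G\times\h^*$, so that $\Psi\colon\g^*\to Z_0$ is dominant. I would use the criterion (cf.~\cite{VP}) that for a dominant morphism $\psi\colon X\to Y$ of irreducible affine varieties with $X$ normal, $\psi^*\bbk[Y]$ is algebraically closed in $\bbk[X]$ as soon as (i) $Y$ is normal, (ii) the generic fibre of $\psi$ is irreducible, and (iii) $Y\setminus\psi(X)$ has codimension $\ge 2$ in $Y$. Apply this to $\psi=\Psi$ and $X=\g^*$ (smooth, hence normal). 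For (ii): a generic point of $Z_0$ is $(\bar a,w)$ with $\bar a\in\cs_G$ and $w\in\h^*$ both generic (the two projections of $Z_0$ being dominant), so $\mathcal O:=\pi_G^{-1}(\bar a)$ is a regular semisimple $G$-orbit and $\Psi^{-1}(\bar a,w)=\mathcal O\cap(\tilde w+\ma^*)$ is the intersection of the smooth irreducible orbit $\mathcal O$ with a translate of the linear subspace $\ma^*$; thus (ii) amounts to the irreducibility of this linear section for generic $(\bar a,w)$. For (i) and (iii): by Kostant's freeness theorem \cite{ko63} the map $\pi_G$ is faithfully flat with all fibres of dimension $\dim\g-\rk\g$, and combined with flatness of the linear projection $\rho$ one checks that the fibres of $\Psi$ over $Z_0$ all have dimension $\dim\g-\dim Z_0$, which forces $\Psi\colon\g^*\to Z_0$ to be faithfully flat. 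Hence $\Psi$ is surjective (so (iii) is vacuous), and $Z_0$ is normal because normality descends along faithfully flat morphisms (so (i) holds). In particular $\eus C_0=\gS(\g)\cap\mathrm{Frac}(\eus C_0)$ is a normal domain.

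For (b): on $\spe\eus C_0\subseteq\cs_G\times\h^*$ the group $H$ acts trivially on the $\cs_G$-factor, and a generic point $(\ast,w)$ has regular semisimple $w$, so its orbit $\{\ast\}\times Hw$ is closed; thus $\spe\eus C_0\to\spe\eus C_1$ restricts to a geometric quotient over a dense open set, its generic fibre is a single (irreducible) orbit, and, using surjectivity of the categorical quotient and normality of the reductive quotient $\eus C_1$, the same criterion gives that $\eus C_1$ is algebraically closed in the normal domain $\eus C_0$. This completes the argument modulo (ii). The one genuinely geometric obstacle is precisely (ii): showing that a generic regular semisimple $G$-orbit meets the affine subspace $\tilde w+\ma^*$ in an irreducible variety — I expect this to require a Bertini-type irreducibility statement for linear sections, applied to the family $\{\pi_G^{-1}(\bar a)\}$ and its translates as $(\bar a,w)$ vary. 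A lesser technical point is the faithful flatness of $\Psi\colon\g^*\to Z_0$ used for (i) and (iii): since $Z_0$ may a priori be singular, the equidimensionality of the fibres of $\Psi$ over all of $Z_0$ (not merely generically) must be checked directly rather than read off from a flatness-by-fibres criterion over a regular base.
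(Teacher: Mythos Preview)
Your proposal takes a genuinely different route from the paper's and, as you yourself flag, has real gaps that are not easily filled.

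The paper's argument is short and direct. After reducing to the case where $\h$ contains no nonzero ideal of $\g$, it invokes Knop's result \cite[Satz~2.1]{Fr} that the generators of $\gS(\g)^{\g}$ and of $\gS(\h)^{\h}$ together form an algebraically independent set of $r=\rk\g+\rk\h$ homogeneous polynomials generating $\eus C_1$. It then shows that their differentials are linearly independent on a \emph{big} open subset of $\g^*$: by the Kostant criterion this amounts to $\h^x=0$ on a big open set, which is handled by an elementary inductive argument on $\dim\g$ using the torus stabilisers at regular semisimple points of $\h^*$. The conclusion then follows from the codimension--$2$ criterion \cite[Thm~1.1]{trio}.

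Your route through the auxiliary algebra $\eus C_0=\mathsf{alg}\langle\gS(\g)^{\g},\gS(\h)\rangle$ is conceptually appealing, and the reduction $\eus C_0^{H}=\eus C_1$ is fine. But step (a) is where the difficulty concentrates, and neither of your two admitted gaps is minor. First, the equidimensionality of the fibres of $\Psi=(\pi_G,\rho)$ does \emph{not} follow from the separate flatness of $\pi_G$ and $\rho$: the fibre over $(\bar a,w)$ is the intersection $\pi_G^{-1}(\bar a)\cap\rho^{-1}(w)$, and there is no a priori transversality, particularly over the nilpotent locus in $\cs_G$ or the singular locus in $\h^*$. Without this, you cannot bootstrap normality of $Z_0$ or surjectivity of $\Psi$, so both (i) and (iii) remain open. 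Second, the irreducibility of the generic fibre---a linear section of a regular semisimple orbit by a translate of $\Ann(\h)$---is a genuine geometric statement that does not fall out of any off-the-shelf Bertini theorem; you would need to control the monodromy or the fundamental group of the family, and this is at least as hard as the codimension estimate the paper actually performs. In effect you have traded the paper's concrete ``$\h^x=0$ on a big open set'' computation for two harder structural claims about $\Psi$, so the proposal as written is not a proof.
 Note also that your step (a) asks for more than the lemma: algebraic closedness of $\eus C_0$ in $\gS(\g)$ is a strictly stronger assertion than what is needed for $\eus C_1$.
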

\begin{proof}
If $\gt h$ contains a non-trivial ideal of $\g$, we can replace $H$ by a smaller subgroup without altering 
$\eus C_1$. Therefore assume that 
$\gt h$ contains no non-trivial ideals of $\g$. Then  
$\eus C_1$ is generated by homogenous algebraically independent 
elements $\{\boldsymbol{c}_1,\ldots,\boldsymbol{c}_r\}$ such that 
$\gS(\g)^{\g}=\bbk[\boldsymbol{c}_1,\ldots,\boldsymbol{c}_l]$ and
$\gS(\gt h)^{\gt h}=\bbk[\boldsymbol{c}_{l+1},\ldots,\boldsymbol{c}_r]$, see \cite[Satz~2.1]{Fr}.  
For $x\in\g^*$, set $\bar x=x\vert_{\gt h}$.
In view of the Kostant regularity criterion \eqref{eq:ko-re-cr}, we have 
$\dim\textsl{d}_x \eus C_1=r$ if and only if 
\[
     x\in\g^*_{\sf reg}, \  \bar x\in\gt h^*_{\sf reg}, \ \text{ and } \  \g^x\cap\gt h^{\bar x}=0.
\]
The first two conditions hold on big open subsets. The third one holds if and only if $\h^x=0$. 
Write $x=\bar x+y$ with $y(\gt h)=0$. Then 
$\h^x=(\h^{\bar x})^{y}$. Our goal is to show that the third condition is also satisfied on a big open subset. 

Let $\gt h^*_{\sf sreg}\subset\gt h^*_{\sf reg}$ be the subset of regular semisimple elements. 
If $\bar x\in\gt h^*_{\sf sreg}$, then the stabiliser $H^{\bar x}$ is a torus. Since the 
action of $H^{\bar x}$ on $\Ann(\gt h)\subset\g^*$ is self-dual, 
$(\gt h^{\bar x})^{y}=0$ on a big open subset of $\Ann(\gt h)$. 
Assume that $D\subset \g^*$ is an irreducible divisor such that 
$\gt h^x\ne 0$ for each $x\in D$. Choose an $H$-stable decomposition   
$\g^*=\gt h^*\oplus\Ann(\gt h)$. Let $p_1$ and $p_2$ be the projections on the first and the 
second summands, respectively. The above argument shows that 
$p_1(D)$ is contained in $\gt h^*\setminus \gt h^*_{\sf sreg}$ and hence necessary 
$\overline{p_2(D)}=\Ann(\gt h)$. 
Now let $y\in\Ann(\gt h)$ be a generic point. 
Then $H^y$ is  a reductive subgroup of $H$.  
Arguing by induction on $\dim\g$ we show that 
$(\gt h^y)^{x'}=0$ for all $x'$ from a big open subset of $\gt h^*$. 
Hence there is no $D$ as above.  

Taking the intersection of three big open subsets, we conclude that the differentials 
$\textsl{d}\boldsymbol{c}_1,\ldots,\textsl{d}\boldsymbol{c}_r$ are linearly independent on a big open 
subset. Since each $\boldsymbol{c}_i$ is homogeneous, \cite[Thm~1.1]{trio} applies and 
guarantees that $\eus C_1$ is algebraically closed.   
\end{proof}

\begin{thm}[cf.~{\cite[Satz\,2.3]{Fr}}]       \label{equiv-new} 
The conditions {\sf(PCm)}, {\sf(Cois)}, and  {\sf(DCn)} are equivalent.   
\end{thm}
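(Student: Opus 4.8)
The plan is to prove the cycle of implications {\sf(DCn)} $\Rightarrow$ {\sf(PCm)} $\Rightarrow$ {\sf(Cois)} $\Rightarrow$ {\sf(DCn)}, using the machinery assembled so far and, crucially, Lemma~\ref{lm-double}, so as to avoid invoking the classification of strong Gelfand pairs (this is the point of the theorem).

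\textbf{Step 1: {\sf(DCn)} $\Rightarrow$ {\sf(PCm)}.} This is the easy implication. Assume $\gS(\g)^{\gt h}=\mathsf{alg}\langle \gS(\g)^{\g},\gS(\gt h)^{\gt h}\rangle$. The Poisson bracket on $\gS(\g)$ restricts to the zero bracket on $\gS(\g)^{\g}$ (it is the Poisson centre), and likewise $\{\gS(\gt h)^{\gt h},\gS(\gt h)^{\gt h}\}_{\gt h}=0$; moreover $\{\gS(\g)^{\g},\gS(\g)\}=0$, so in particular $\{\gS(\g)^{\g},\gS(\gt h)^{\gt h}\}=0$. Since the Poisson bracket is a biderivation, a commutative algebra generated by a set of pairwise Poisson-commuting elements is Poisson-commutative; hence $\gS(\g)^{\gt h}$ is Poisson-commutative, which is {\sf(PCm)}.

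\textbf{Step 2: {\sf(PCm)} $\Rightarrow$ {\sf(Cois)}.} Here I would use the dictionary developed in Section~\ref{subsec-cois} for the $H$-action on a coadjoint orbit $M=Gx$ with $x$ semisimple, so that $M$ is closed. The $H$-action on $M$ is Hamiltonian with moment map the restriction $M\hookrightarrow\g^*\twoheadrightarrow\gt h^*$, and $\mu^*(\gS(\gt h))$ is exactly the restriction of $\gS(\g)$'s ``$\gt h$-collective'' functions. The key identity is that the restriction map sends $\gS(\g)^{\gt h}$ onto a subalgebra of $\bbk[M]^{H}$ whose quotient field is all of $\bbk(M)^{H}$: indeed for $x$ semisimple the orbit $Gx$ is affine, $H$ is reductive, and the $H$-invariants on $Gx$ are the restrictions of $G\times H$-covariants; one checks $\trdeg\bbk[M]^{H}=\trdeg\bbk(M)^{H}$ so that \eqref{cois-4} and \eqref{cois-5} coincide. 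Since the Poisson bracket on $\bbk[M]$ is induced from that on $\gS(\g)$ (the restriction map is a Poisson homomorphism, as recalled after \eqref{P-br}), Poisson-commutativity of $\gS(\g)^{\gt h}$ forces Poisson-commutativity of $\bbk[M]^{H}$, hence of $\bbk(M)^{H}$, which by \eqref{cois-4} is precisely coisotropy of the $H$-action on $M$. Running this over all closed orbits gives {\sf(Cois)}.

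\textbf{Step 3: {\sf(Cois)} $\Rightarrow$ {\sf(DCn)}.} This is the main obstacle and is where Lemma~\ref{lm-double} enters. Set $\eus C_1=\mathsf{alg}\langle\gS(\g)^{\g},\gS(\gt h)^{\gt h}\rangle\subseteq\gS(\g)^{\gt h}$; by Lemma~\ref{lm-double}, $\eus C_1$ is algebraically closed in $\gS(\g)$. To get equality $\eus C_1=\gS(\g)^{\gt h}$ it suffices to show $\trdeg\eus C_1=\trdeg\gS(\g)^{\gt h}$, because an algebraically closed subalgebra of a polynomial ring that has the same transcendence degree as a larger integrally-closed-type subalgebra between them must coincide with it (more precisely: $\eus C_1\subseteq\gS(\g)^{\gt h}$, both are finitely generated, $\eus C_1$ is algebraically closed in $\gS(\g)$ hence a fortiori in $\gS(\g)^{\gt h}$, so equal transcendence degree forces $\eus C_1=\gS(\g)^{\gt h}$). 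Now compute both transcendence degrees. On one side, from the proof of Lemma~\ref{lm-double}, $\trdeg\eus C_1=r=l+\trdeg\gS(\gt h)^{\gt h}=\rk\g+\rk\gt h$ (using $\ind\gt h=\rk\gt h$ and the generic triviality $\h^{x}=0$ established there, which shows the differentials $\textsl{d}\boldsymbol c_1,\dots,\textsl{d}\boldsymbol c_r$ are generically independent). On the other side, $\trdeg\gS(\g)^{\gt h}=\dim\g^*-\dim(Hy)$ for generic $y\in\g^*$, i.e.\ $\dim\g-(\dim\gt h-\dim\gt h^{y})$; by the Elashvili-type bound (or directly, since $y$ generic) $\dim\gt h^{y}=\ind\gt h=\rk\gt h$ generically, giving $\trdeg\gS(\g)^{\gt h}=\dim\g-\dim\gt h+\rk\gt h$. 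Thus I must show
\[
\dim\g-\dim\gt h+\rk\gt h \;=\; \rk\g+\rk\gt h,
\quad\text{i.e.}\quad \dim\g-\dim\gt h=\rk\g,
\]
equivalently $\dim\gt h=\dim\g-\rk\g=\dim(G/B)\cdot? $; this is where coisotropy is used: {\sf(Cois)} says the $H$-action on a closed orbit $Gx$ of maximal dimension $\dim\g-\rk\g$ is coisotropic, so a generic $H$-orbit is coisotropic (dimension $\ge\frac12\dim Gx$) while the isotropy $\gt h^{x}$ is generically trivial by the argument in Lemma~\ref{lm-double}; hence $\dim\gt h\ge\frac12(\dim\g-\rk\g)$, and combined with the reverse inequality coming from \eqref{tr} applied to the Poisson-commutative algebra $\gS(\g)^{\gt h}$ — namely $\trdeg\gS(\g)^{\gt h}\le\bb(\g)$ — one pins down $\dim\g-\dim\gt h=\rk\g$. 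With the two transcendence degrees now equal, $\eus C_1=\gS(\g)^{\gt h}$, which is {\sf(DCn)}.

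The step I expect to fight hardest with is Step~3, specifically the numerology extracting $\dim\g-\dim\gt h=\rk\g$ from coisotropy: one must be careful that ``coisotropic on every closed orbit'' is really being used at a closed orbit of \emph{maximal} dimension and that the stabiliser computation $\gt h^{x}=0$ (for generic $x$ in that orbit) is legitimately available from the proof of Lemma~\ref{lm-double} in the closed-orbit setting rather than only on all of $\g^{*}$. If that identification of dimensions is delicate, an alternative is to bypass it: show directly that $\eus C_1=\gS(\g)^{\gt h}$ by proving that the differentials $\textsl{d}_x\gS(\g)^{\gt h}$ and $\textsl{d}_x\eus C_1$ coincide for generic $x$, using coisotropy to control $\textsl{d}_x\gS(\g)^{\gt h}=\Ann(\gt h x)$ and the Kostant regularity criterion \eqref{eq:ko-re-cr} to identify $\textsl{d}_x\gS(\g)^{\g}+\textsl{d}_x\gS(\gt h)^{\gt h}=\g^{x}+\gt h^{\bar x}$, then invoking that both algebras are algebraically closed (Lemma~\ref{lm-double} for $\eus C_1$, and $\gS(\g)^{\gt h}$ being the invariants of a reductive group, hence integrally closed, to upgrade equality of differentials to equality of algebras).
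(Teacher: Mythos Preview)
Your Steps 1 and 2 are essentially the paper's argument (the paper invokes Luna's theorem to justify $\bbk(Gx)^H=\mathrm{Quot}(\bbk[Gx]^H)$ for a closed orbit, which is the point you gloss over, but the idea is the same).

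Step 3 contains a genuine error. You compute $\trdeg\gS(\g)^{\gt h}=\dim\g-\dim\gt h+\dim\gt h^{y}$ for generic $y\in\g^*$ and then assert $\dim\gt h^{y}=\rk\gt h$. But $\gt h^{y}$ here is the stabiliser of $y\in\g^*$ in $\gt h$, not the stabiliser of $\bar y=y|_{\gt h}\in\gt h^*$; these are entirely different objects. The very Lemma~\ref{lm-double} you cite shows that $\gt h^{y}=0$ on a big open subset of $\g^*$ (when $\gt h$ contains no non-trivial ideal of $\g$). So in fact $\trdeg\gS(\g)^{\gt h}=\dim\g-\dim\gt h$, and the equality you then try to establish, $\dim\g-\dim\gt h=\rk\g$, is simply false for the strong Gelfand pairs: for $(\GL_n,\GL_{n-1})$ one has $\dim\g-\dim\gt h=2n-1$ while $\rk\g=n$. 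The correct target is $\dim\g-\dim\gt h\le\rk\g+\rk\gt h$, and your coisotropy-plus-\eqref{tr} argument does not reach it (it yields only $\dim\gt h\ge\frac12(\dim\g-\rk\g)$; moreover invoking \eqref{tr} for $\gS(\g)^{\gt h}$ presupposes {\sf(PCm)}, which is not yet available in the implication {\sf(Cois)} $\Rightarrow$ {\sf(DCn)}).

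The paper's route through Step 3 is different and short: from~\eqref{cois-6} one deduces that for a coisotropic $H$-action on $Gx$ the generic fibre of $Gx\to\gt h^*\to\gt h^*\md H$ is a single $H$-orbit, whence $\trdeg\bbk(Gx)^H\le\dim(\gt h^*\md H)=\rk\gt h$. Adding the $\rk\g$ coming from $\gS(\g)^{\g}$ gives $\trdeg\gS(\g)^{\gt h}\le\rk\g+\rk\gt h=\trdeg\eus C_1$, and Lemma~\ref{lm-double} then forces $\eus C_1=\gS(\g)^{\gt h}$. Your fallback suggestion (comparing $\textsl{d}_x\gS(\g)^{\gt h}$ with $\g^x+\gt h^{\bar x}$) could be made to work, but as written it does not explain how coisotropy controls $\textsl{d}_x\gS(\g)^{\gt h}$; the missing ingredient is precisely the bound $\trdeg\bbk(Gx)^H\le\rk\gt h$.
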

\begin{proof}
For any closed orbit $Gx\subset \g^*$, generic $H$-orbits in $Gx$ are closed as well~\cite{Luna}. 
Hence they are separated by regular $H$-invariants and $\bbk(Gx)^H$ is the quotient field of 
$\bbk[Gx]^H$. As $H$ is reductive, $\bbk[Gx]^H$ is the restriction of $\bbk[\g^*]^H$ to $Gx$. 
Thus, {\sf(PCm)} $\Rightarrow$ {\sf(Cois)}. 

Since $\gS(\g)^{\g}$ is the Poisson centre 
of $\gS(\g)$ and $\gS(\gt h)^{\gt h}$ is Poisson-commutative, we have {\sf(DCn)} $\Rightarrow$ {\sf(PCm)}. 

It remains to show that   {\sf(Cois)} $\Rightarrow$ {\sf(DCn)}. 
Suppose that  {\sf(Cois)} holds. 
One of the equivalent interpretations, see \eqref{cois-6}, implies that 
$\trdeg \bbk(Gx)^H \le \rk\gt h$ for each $Gx\subset\g^*$. 
Thereby $\trdeg \gS(\g)^{\gt h}\le \rk\g+\rk\gt h$. 
We may safely assume that $\gt h$ contains no proper ideals of $\g$.  
By \cite[Satz~2.1]{Fr}, $\trdeg \eus C_1=\rk\g+\rk\gt h$ for $\eus C_1$ as in 
Lemma~\ref{lm-double}. 
Clearly $\eus C_1\subset \gS(\g)^{\gt h}$ is an algebraic extension. 
Since $\eus C_1$ is algebraically closed by Lemma~\ref{lm-double}, we have 
 $\eus C_1 = \gS(\g)^{\gt h}$ and  {\sf(DCn)} holds.
\end{proof}

\subsection{Cotangent bundles and Richardson orbits}  
\label{subs:CtB} 
There are similar results for nilpotent orbits, where a different kind of invariant theory is involved. 

Let now $H\subset G$ be an arbitrary  reductive subgroup of a reductive group $G$. Take a parabolic 
$P\subset G$. 
Then the action of $H$ on $G/P$ is spherical if and only if the action of $H$ on $T^*(G/P)$ is coisotropic, see \cite[Satz\,7.1]{Fr-WM} and also \cite[Chapter\,2,\,\S 3]{vin-com}.  
The image of the moment map 
\[
     \mu : T^*(G/P)\to \g^*
\]
 is isomorphic to $G {\gt u}$, where $\gt u\subset\gt p=\Lie P$ is
the nilpotent radical of $\gt p$.  Let $e\in\gt u$ be a {\it Richardson element}, which means that  
$\co=Ge$ is dense in $G\gt u$.  Comparing the symplectic structures on $T^*(G/P)$ and  on 
$\co$, one obtains the following result.

\begin{thm}[{\cite[Thm\,2.6]{av}}]        \label{sp-R}
The action of $H$ on $\mathcal O$ is coisotropic if and only if
$G/P$ is a spherical $H$-variety.  \qed
\end{thm}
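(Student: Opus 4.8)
The plan is to compare the two Hamiltonian $H$-spaces whose equivalence is being asserted: the cotangent bundle $T^*(G/P)$ and the nilpotent orbit $\co=Ge$ sitting inside $\g^*$. Both carry natural symplectic structures together with a Hamiltonian $H$-action (the restriction of the $G$-action), and I would want to transfer the coisotropy property from one to the other via the moment map $\mu\!: T^*(G/P)\to \g^*$. The essential geometric input, already recalled just before the statement, is that $\mu$ is a $G$-equivariant map whose image is $\overline{G\gt u}=\overline{\co}$, and that $\mu$ restricts to a symplectic covering (in fact a resolution-type map) onto its image over the dense orbit $\co$; the Springer-type fibre over a Richardson element $e$ is connected and finite (it is a single point for a polarisation), so $\mu^{-1}(\co)\to\co$ is a finite symplectic covering. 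The first step, then, is to make this precise: identify an open dense $G$-stable (hence $H$-stable) subset $\co\subset \mathrm{Im}\,\mu$ and an open dense $H$-stable subset $U=\mu^{-1}(\co)\subset T^*(G/P)$ such that $\mu|_U\!:U\to\co$ is a symplectic map with finite fibres, intertwining the two $H$-actions.

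Next I would feed this into one of the many equivalent characterisations of coisotropy collected in Section~\ref{subsec-cois}. The cleanest route is via the birational-invariant description~\eqref{cois-4}: an $H$-action on an irreducible symplectic variety is coisotropic if and only if the field of rational $H$-invariants is Poisson-commutative. Since $\mu|_U$ is dominant with finite fibres and $H$-equivariant, pullback gives an isomorphism (or at least a finite algebraic extension) $\mu^*\!:\bbk(\co)^H\hookrightarrow \bbk(T^*(G/P))^H$ of fields, and it is Poisson-compatible because $\mu$ is a Poisson map (the moment map of a Hamiltonian action is always a Poisson morphism onto its image with the Lie--Poisson bracket on $\g^*$). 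Poisson-commutativity is a property that a field inherits from and passes to finite Poisson extensions, so $\bbk(\co)^H$ is Poisson-commutative precisely when $\bbk(T^*(G/P))^H$ is. Combining this with the already-quoted equivalence ``$H$ acts coisotropically on $T^*(G/P)$ $\Leftrightarrow$ $G/P$ is a spherical $H$-variety'' (\cite[Satz\,7.1]{Fr-WM}, also recalled in the excerpt) yields exactly the claimed biconditional: $H$ acts coisotropically on $\co$ iff $G/P$ is $H$-spherical.

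An alternative, more hands-on variant of the same argument would use the complexity/transcendence-degree count: $H$-coisotropy on an irreducible symplectic variety $M$ with $\dim M=2n$ is equivalent to $\trdeg\bbk(M)^H=n-\frac12\dim(\text{generic }H\text{-orbit})$ being as small as possible, i.e. to the generic $H$-orbit being coisotropic-dimensional. Since $\mu|_U$ is generically finite, $\dim T^*(G/P)=\dim\co$, generic $H$-orbits correspond under $\mu$, and $\trdeg\bbk(T^*(G/P))^H=\trdeg\bbk(\co)^H$; the equality characterising coisotropy then holds on one side iff on the other. I would present whichever of these two formulations makes the Poisson-map bookkeeping lightest, and cite \cite{av} for the comparison of symplectic structures rather than reproving it.

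The main obstacle I anticipate is the careful justification that $\mu$ restricted over the open dense orbit $\co$ is genuinely ``nice enough'' — dominant, $H$-equivariant, generically finite, and Poisson — so that invariants and their Poisson brackets match on both sides. Two subtleties lurk here: first, $\mu\!:T^*(G/P)\to\overline{\co}$ need not be finite globally (it contracts lower-dimensional orbit closures), so one must be scrupulous about working over the open locus $\co$ and about the fact that finiteness of fibres over $\co$ is exactly the Richardson condition that $\co$ is dense in $G\gt u$; second, one needs the generic fibre of $\mu$ over $\co$ to be connected (equivalently, the normaliser bookkeeping $N_G(P)/P$ issues) so that $\mu^*$ is an honest birational identification of invariant fields rather than merely a finite extension — though, as noted, even a finite extension suffices for transferring Poisson-commutativity, so this last point is a convenience rather than a necessity. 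Apart from that, everything is an application of results already in hand, and the proof should be short once the moment-map comparison is pinned down; indeed the statement is quoted from \cite[Thm\,2.6]{av}, so the expected write-up is essentially a pointer to that source together with the dictionary between coisotropy and sphericity assembled above.
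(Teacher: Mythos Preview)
Your proposal is correct and follows exactly the route the paper indicates: the paper does not actually prove this theorem but cites \cite[Thm\,2.6]{av}, merely sketching in the preceding paragraph that one combines the Knop equivalence (spherical $\Leftrightarrow$ coisotropic on $T^*(G/P)$) with a comparison of the symplectic structures on $T^*(G/P)$ and $\co$ via the moment map --- precisely the two ingredients you spell out in detail.
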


For a strong Gelfand pair $(G,H)$, this implies that the $H$-action on any Richardson $G$-orbit is
coisotropic. Since every nilpotent orbit in $\gt{gl}_n$ is Richardson, 
\beq              \label{eq-cois}
     \text{the
 $\GL_{n-1}(\bbk)$-action  on any nilpotent adjoint orbit of $\GL_n(\bbk)$ is coisotropic.}
\eeq
Coisotropic actions of subgroups $Q\subset G$ on adjoint orbits of 
a semisimple group  $G$ have also been studied in \cite{zor}.

\section{The polynomial Gelfand--Tsetlin integrable system in type {\sf A}} 
\label{sec-A}

\noindent
In this section, $\g=\gln=\gln(\bbk)$. 
Let $\{E_{ij} \}_{i,j=1}^n\in\gln$ be the matrix units. 
Fix the chain of subalgebras
\beq     \label{eq:chain}
     \gln \supset \gt{gl}_{n-1}\supset \ldots \supset \gt{gl}_2\supset \gt{gl}_1 ,
\eeq
where $\gt{gl}_{n-k}=\left<E_{ij} \mid i,j > k\right>_{\bbk}$. In other words, let us fix a basis
$\{v_1,\dots,v_n\}$ for $V_n=\bbk^n$ and set $V_j=\left<v_{n-j+1}, \dots, v_n\right>_{\bbk}$.
Then $V_n\supset\dots \supset V_1$ is a full flag and $\gt{gl}_j=\gt{gl}(V_j)$ for all $j$.
For any matrix $A\in\gln$, let $A_m$ denote 
the south-east corner of $A$ of size $n-m$, i.e., $A_m\in \gt{gl}_{n-m}$.
For each $m\in\{0,1,\dots,n-1\}$, let $\{\Delta_k^{[m]}\mid 1\le k\le n-m\}$  be the coefficients of the characteristic polynomials of $A_m$. Here
$\Delta_k^{[m]}\in \eus S^k(\gt{gl}_{n-m})\subset \eus S^k(\gln)$, and we also write $\Delta_k=\Delta_k^{[0]}$.

The {\it Gelfand--Tsetlin (=\,GT) subalgebra\/} $\eus C\subset\gS(\gln)$
is generated by  
\[
  \Delta_1,\ldots,\Delta_n, \Delta_1^{[1]},\ldots, \Delta_{n-1}^{[1]},\Delta_1^{[2]}, \ldots, 
  \Delta_k^{[m]}, \ldots, \Delta_1^{[n-1]}.
\]
Note that $\eus C=\gr(\widetilde{\eus C})$, where $\widetilde{\eus C}\subset\eus U(\gln)$ is
the commutative subalgebra defined and studied by Gelfand and Tsetlin~\cite{gt-1}. Therefore,
these generators are algebraically independent, $\trdeg\eus C=\bb(\gln)$, and
$\{\eus C,\eus C\}=0$.   

By~\cite{t:max}, $\eus C$ is  a maximal Poisson-commutative subalgebra of $\gS(\gln)$. The same result 
is independently obtained in~\cite[Thm~3.25]{KoW}. Kostant and Wallach also prove that $\eus C$ 
is complete on every regular orbit, see Theorem~3.36 in loc.\,cit. 
\\ \indent
We prove below that $\eus C$ is complete on {\bf every} (co)adjoint orbit. 

\begin{df}     \label{def:str-nilp}
A matrix $A\in\gln$ is said to be
\begin{itemize}
\item[\sf (i)]  \ {\it strongly regular}, if $\dim\textsl{d}_A\eus C=\bb(\gln)$;
\item[\sf (ii)] \ {\it strongly nilpotent}, if $\Delta_k^{[m]}(A_m)=\Delta_k^{[m]}(A)=0$ for
$0\le m\le n-1$ and $1\le k\le n-m$.
\end{itemize}
\end{df}

\begin{thm}      \label{strong}
Any nilpotent orbit $\co\subset\g^*$ contains a strongly nilpotent element $e\in\co$ such that 
\\[.7ex]  
\centerline{$ \dim \textsl{d}_e \eus C = n+\frac{1}{2}\dim \co$ \ 
and \ $\dim (\textsl{d}_e \eus C\cap \g^e)=n$.}
\\[.7ex] 
In particular, $\eus C$ is complete on $\co$.
\end{thm}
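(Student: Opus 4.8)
Since the theorem concerns a fixed nilpotent orbit $\co\subset\g^*$, the whole task is to exhibit a single $e\in\co$ with $\dim\textsl{d}_e\eus C=\tfrac12\dim\co+n$ and $\dim(\textsl{d}_e\eus C\cap\g^e)=n$; completeness on $\co$ is then automatic. Indeed, the canonical projection $\psi_e\colon T^*_e\g^*\to T^*_e(Ge)$ has kernel $\g^e$, so these two equalities force $\dim\psi_e(\textsl{d}_e\eus C)=\tfrac12\dim\co=\tfrac12\dim(Ge)$. Because $\eus C$ is Poisson-commutative, $\hat e$ vanishes on $\textsl{d}_e\eus C$, hence $\psi_e(\textsl{d}_e\eus C)$ is an isotropic — therefore Lagrangian — subspace of the symplectic space $T^*_e(Ge)$. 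Consequently $\trdeg(\eus C|_{Ge})\ge\dim\textsl{d}_e(\eus C|_{Ge})=\dim\psi_e(\textsl{d}_e\eus C)=\tfrac12\dim(Ge)$, and together with the reverse inequality $\trdeg(\eus C|_{Ge})\le\tfrac12\dim(Ge)$ (valid since $\eus C|_{Ge}$ is Poisson-commutative on the symplectic variety $Ge$) this gives that $\eus C$ is complete on $\co$.

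To produce $e$ I would argue by recursion on $n$. Let $\boldsymbol{r}=(r_1\ge\dots\ge r_t)$ be the partition attached to $\co$. Realise $\co$ by a nilpotent $e$ that is a direct sum of single Jordan strings of lengths $r_1,\dots,r_t$ placed inside the fixed flag $V_n\supset\dots\supset V_1$ in the most flag-economical way, so that the south-east corner $e_1\in\gt{gl}_{n-1}$ is itself the distinguished representative, produced by the same recipe, of the appropriate nilpotent $\gt{gl}_{n-1}$-orbit $\co'$ obtained from $\boldsymbol{r}$ by deleting one box. The combinatorial data — which basis vectors belong to which string, and hence which corners $e_m$ are nilpotent and of which Jordan type — is exactly the colour pattern attached to $\co(\boldsymbol{r})$ in Example~\ref{s(i)-A}; one has to check that this recipe indeed yields a \emph{strongly nilpotent} $e$ in the sense of Definition~\ref{def:str-nilp}.

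Next I would compute $\textsl{d}_e\eus C$. For nilpotent $e$ one has $\Delta_i(e)=0$, and the Faddeev--LeVerrier recursion for $\operatorname{adj}(\lambda I-e)$ shows that $\textsl{d}_e\Delta_k=\pm e^{k-1}$ under the fixed identification $\g^*\simeq\g$; hence $\langle\textsl{d}_e\Delta_1,\dots,\textsl{d}_e\Delta_n\rangle=\langle I,e,\dots,e^{r_1-1}\rangle\subset\g^e$, of dimension $r_1$. Since $\eus C=\mathsf{alg}\langle\eus S(\gln)^{\gln},\eus C'\rangle$, where $\eus C'\subset\eus S(\gt{gl}_{n-1})\subset\eus S(\gln)$ is the GT-subalgebra of $\gt{gl}_{n-1}$, we obtain
\[
   \textsl{d}_e\eus C=\langle I,e,\dots,e^{r_1-1}\rangle+\iota_1(\textsl{d}_{e_1}\eus C'),
\]
$\iota_1\colon\gt{gl}_{n-1}\hookrightarrow\gln$ being the corner embedding. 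By the inductive hypothesis applied to $e_1\in\co'$ we have $\dim\textsl{d}_{e_1}\eus C'=\tfrac12\dim\co'+(n-1)$ and $\dim(\textsl{d}_{e_1}\eus C'\cap(\gt{gl}_{n-1})^{e_1})=n-1$. The red boxes of the colour pattern supply the $n$ desired differentials lying in $\g^e$: the $r_1$ red boxes of the first row give $I,e,\dots,e^{r_1-1}$, while the red boxes of the lower rows give those elements of $\iota_1(\textsl{d}_{e_1}\eus C')$ that in fact commute with all of $e$ — which holds because of the flag-economical shape of $e$, in the same spirit as the inclusions $\partial_e^{s(i)}H_i\in\eus S(\g^e)$ recalled before Proposition~\ref{complete}. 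The green boxes (their count is exactly $\tfrac12\dim\co$ by Example~\ref{s(i)-A}) account for the remaining directions, and the white boxes give zero differentials; feeding these counts into the displayed formula yields $\dim\textsl{d}_e\eus C=\tfrac12\dim\co+n$ and $\dim(\textsl{d}_e\eus C\cap\g^e)=n$.

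The main obstacle is the transversality packaged in the displayed formula: one must show that $\langle I,e,\dots,e^{r_1-1}\rangle\cap\iota_1(\textsl{d}_{e_1}\eus C')$ has exactly the dimension $r_1-1+\tfrac12(\dim\co'-\dim\co)$ forced by the target equality, and that no unaccounted relations appear between $\langle I,e,\dots,e^{r_1-1}\rangle$ and $\iota_1(\textsl{d}_{e_1}\eus C')$. This calls for a precise description of how the powers $e^j$ meet $\iota_1(\gt{gl}_{n-1})$, together with the internal structure of $\textsl{d}_{e_1}\eus C'$ supplied by the induction, and it is exactly the step where the explicit flag-adapted shape of $e$ — equivalently, the combinatorics of the colour pattern of Example~\ref{s(i)-A} — is indispensable. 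The remaining points (the base case $n=1$, the verification that all corners $e_m$ are nilpotent, and the completeness deduction of the first paragraph) are routine.
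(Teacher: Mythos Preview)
Your inductive framework and the use of the colour pattern are right, but the heart of the argument --- the construction of the representative $e$ --- is where the proposal goes wrong. You describe $e$ as ``a direct sum of single Jordan strings \dots\ placed inside the fixed flag in the most flag-economical way'' so that $e_1$ is the distinguished representative for the orbit ``obtained from $\boldsymbol r$ by deleting one box''. Interpreted in the obvious way (place the Jordan blocks along the diagonal, largest first), this makes $e_1$ nilpotent of type $(r_1{-}1,r_2,\dots,r_t)$. But then $\dim\co-\dim\co'=2(n-1)$, and your own dimension bookkeeping
\[
\dim\textsl{d}_e\eus C = r_1 + \dim\textsl{d}_{e_1}\eus C' - (\text{overlap})
= r_1 + (n{-}1) + \tfrac12\dim\co' - (\text{overlap})
\]
can equal $n+\tfrac12\dim\co$ only when $r_1=n$, i.e.\ only for the regular orbit. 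So the induction collapses at the very first non-regular step.

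The paper's proof hinges on a different, and not at all obvious, representative: one arranges that the corner $e_1$ has Jordan type $\boldsymbol r_1=(r_1{+}r_2{-}1,r_3,\dots,r_t)$, i.e.\ the two largest blocks of $e$ \emph{merge} when passing to $\gt{gl}_{n-1}$. This is achieved by an explicit formula in which $e$ is \emph{not} block-diagonal in the flag basis (the key twist is $e\,v_{r_1}=v_{r_1+1}+v_1$). With this choice one has $\dim\co-\dim\co(\boldsymbol r_1)=2(r_1{-}1)$, and then proving $\langle I,e,\dots,e^{r_1-1}\rangle\cap\gt{gl}_{n-1}=0$ (a short computation with the explicit $e$) yields exactly $\dim\textsl{d}_e\eus C=n+\tfrac12\dim\co$. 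The colour-pattern transition you allude to is precisely the passage $\boldsymbol r\rightsquigarrow\boldsymbol r_1$; ``delete one box'' is the wrong combinatorics.

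There is a second, related gap in your treatment of $\textsl{d}_e\eus C\cap\g^e$. You assert that the red boxes of the lower rows contribute elements of $\iota_1(\textsl{d}_{e_1}\eus C')$ that commute with $e$. This need not hold: $(\gt{gl}_{n-1})^{e_1}$ is not contained in $\g^e$. The paper argues in the opposite direction: it shows that the $r_1{-}1$ elements $e_1^{0},\dots,e_1^{r_1-2}\in(\gt{gl}_{n-1})^{e_1}$ remain \emph{linearly independent modulo} $\g^e$ (again by a direct computation using the explicit $e$), which together with the inductive information on $\textsl{d}_{e_1}\eus C'/(\gt{gl}_{n-1})^{e_1}$ forces $\dim(\textsl{d}_e\eus C\cap\g^e)=n$. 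Without the merged-block representative, neither of these computations is available.
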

\begin{proof}
As above, for $e\in\gln\simeq\gln^*$, let $e_m\in\gt{gl}_{n-m}$ denote the corresponding 
south-east corner, where $0\le m< n$. In particular, $e_0=e$ and $e_{n-1}\in\gt{gl}_1$. If all $\{e_m\}$ 
are nilpotent, then $\textsl{d}_e \Delta_k^{[m]}=(e_m)^{k-1}$ as a matrix.   

Let $\co=\co(\boldsymbol{r})$, where $\boldsymbol{r}=(r_1,r_2,\dots, r_t)$ is the corresponding
partition of $n$. If $t=1$, i.e., $r_1=n$,  then $\co=\co(n)$ is regular and a Jordan normal form adapted to the chain~\eqref{eq:chain} provides a strongly nilpotent element in $\co$. Namely, take a basis  
$\{v_1,\dots,v_n\}$ for $\bbk^n$ as above and set $e v_j=v_{j+1}$ for all $j$. (Here and below we assume that $v_j=0$ for $j>n$.) In this case,
$\textsl{d}_e\eus C=\be$, the unique Borel subalgebra containing $e$, and the assertions are clear.
Therefore, we always assume below that $t\ge2$, i.e., $r_2>0$.

Let $e'\in\gt{gl}_{n-1}$ be a nilpotent element defined by the partition 
$\boldsymbol{r}_1=(r_1+r_2-1,r_3,\ldots,r_t)$. As the next step
we will construct a representative $e\in\co$ such that $e'=e_1$. 
Our construction will not affect the Jordan blocks for $r_3,\dots,r_m$. 
Set $c=r_1+r_2$.

Let $\{v_2,\ldots,v_c\}$ be a Jordan basis for the first block of $e'$, i.e., $e'v_j=v_{j+1}$ for
$2\le j\le c-1$  and $e'v_c=0$. 
Define $e\in\gln$ as follows:
\\[.4ex]   \centerline{
$e v_1 = - v_{r_1+2}$, \ $e v_{r_1} = v_{r_1+1} + v_1$, \
$e v_c=0$, and $e v_j=e' v_j=v _{j+1}$ for
$j\ne 1, r_1, c$.}
\\[.5ex]
Then $\{v_2,\ldots,v_{r_1}, v_{r_1+1} {+} v_1\}$ is a Jordan basis for the block of size $r_1$ for $e$
and if $r_2\ge 2$, then $\{\frac{1}{2}(v_{r_1+1} {-} v_1),v_{r_1+2},\ldots,v_{c}\}$ is a Jordan basis 
for the block of size $r_2$ for $e$.
For $r_2=1$, the second block consists of $v_{r_1+1}-v_1$ or just $v_1$.

In Example~\ref{s(i)-A}, we have constructed the colour pattern
associated with  $\co(\boldsymbol{r})$.  For further considerations, replace 
each $\partial^m_{a} H_k$ in that pattern with $\Delta^{[m]}_{k-m}$. 

In order to prove the theorem, we  argue by induction on $n$. The case $n=1$ is void. 
By the  inductive hypothesis,  both equalities of the theorem hold for $e_1$.
Observe that the colour pattern associated with  $\co(\boldsymbol{r}_1)$ can be obtained
from that of   $\co(\boldsymbol{r})$ in  two steps.     
First, we cut the top row, thus, producing a wrong pattern, as the last $r_1-1$ columns begin with a green 
box. Second, these boxes are repainted red. 
The figure bellow illustrates the passage from 
$\co(3,2,1)$ to $\co(4,1)$. 

\vskip1.5ex
\ytableausetup{mathmode, boxsize=1.2em}
\quad\begin{ytableau}
*(white)& *(white)& *(white)& *(red) & *(red)& *(red)\\
*(white)& *(red) & *(red) & *(green) &*(green) \\
*(red) & *(green) &*(green) &*(green) \\
*(green) &*(green) &*(green) \\
*(green) &*(green) \\
*(green) \\
\end{ytableau} 
\ $\xlongrightarrow{\text{wrong pattern}}$ \
\begin{ytableau}
*(white)& *(red) & *(red)& *(green) & *(green)\\
*(red) & *(green) &*(green) &*(green) \\
*(green) &*(green) &*(green) \\
*(green) &*(green) \\
*(green) \\
\end{ytableau}
\ $\xlongrightarrow{\text{repainted pattern}}$ \
\begin{ytableau}
*(white)& *(red) & *(red)& *(red) & *(red)\\
*(red) & *(green) &*(green) &*(green) \\
*(green) &*(green) &*(green) \\
*(green) &*(green) \\
*(green) \\
\end{ytableau}

 \vskip2ex

Recall from Proposition~\ref{complete} and 
Example~\ref{s(i)-A} that in the pattern related to $\co$ the number of coloured boxes equals $n+\frac{1}{2}\dim \co$ and the number of green boxes equals $\frac{1}{2}\dim\co$.

By the inductive hypothesis, 
\[
 \dim \left<  \textsl{d}_{e_1}  \Delta_k^{[m]} \mid m\ge 1 \right>_{\bbk}=(n-1)+\frac{1}{2}\dim  
  \co(\boldsymbol{r}_1)=\big(n+\frac{1}{2}\dim\co\big)-r_1.
\]
Observe that 
$\textsl{d}_{e}  \Delta_k^{[m]} = \textsl{d}_{e_1}  \Delta_k^{[m]}  \in\gt{gl}_{n-1}$ for $m\ge 1$. 
As can be easily seen, the matrices  $e^k=\textsl{d}_e \Delta_{k+1}$ with $0\le k<r_1$ are linearly 
independent. Furthermore, $e^k=0$ for $k\ge r_1$. 
In order to show that $\textsl{d}_e \eus C$ has the required dimension, 
it is enough  to prove that $$\left< e^0,\ldots,e^{r_1-1}\right>_{\bbk} \cap  \gt{gl}_{n-1}=0.$$ For $0<k<r_1$, we have 
\[
  e^k v_{r_1-k+1}=e v_{r_1}=v_{r_1+1}+v_1 \ \text{ and } 
  e^k v_s \in \left<v_2,\ldots,v_c\right>_{\bbk} \ \text{if } s\ne r_1-k+1.
\] 
Also $e^0 v_{k}=v_k$ for $2\le k\le c$. Since the vectors 
$v_{2},\ldots,v_{r_1}$ are linearly independent,  $$\left< e^0,\ldots,e^{r_1-1}\right>_{\bbk} \cap  \gt{gl}_{n-1}\subset (\bbk e^0 \cap \gt{gl}_{n-1}).$$ 
Clearly,  $e^0\not\in\gt{gl}_{n-1}$. 
Therefore $\dim\textsl{d}_e\eus C=n+\frac{1}{2}\dim\co$.    

The behaviour of $\eus C$ on $\co$ is a more delicate question.  
Recall that $\gt g^e$ is the kernel of the canonical projection 
$T^*_e\gt g^*\to T^*_e\co$. Furthermore, 
we need the following  obvious observations: $\textsl{d}_e \Delta_k\in\gt g^e$ for each $k$ 
and $\dim\co - \dim\co(\boldsymbol{r}_1)=2(r_1-1)$. By the inductive hypothesis, 
the images of  $\textsl{d}_{e}\Delta_k^{[m]}$ with $m>1$ under the projection  
$$\gt{gl}_{n-1}\to \gt{gl}_{n-1}/(\gt{gl}_{n-1})^{e_1}$$ 
span a subspace of dimension $\frac{1}{2} \dim\co(\boldsymbol{r}_1)$. 

Consider now the green elements $\Delta_k^{[1]}$.  
Here $1 \le k \le r_1-1$ and  $\textsl{d}_e \Delta_k^{[1]}=\textsl{d}_{e_1} \Delta_k^{[1]}=e_1^{k-1}$.
Clearly, $e_1^k\in (\gt{gl}_{n-1})^{e_1}$ for each $k$. 
In order to finish the proof it suffices to show that the differentials  
$e_1^k$ with $0\le 0\le r_1-2$ remain  linearly independent on $T_e\co=\ad\!^*(\g)e$.

Let $y\in(\gln)^e$. Using elementary properties of centralisers~\cite[Sect.~1]{surp}, one readily sees that 
$v_{r_1+1}-v_1$ does not lie in 
\[
  R(y)=y\left<v_3,\ldots,v_{r_1},v_{r_1+1}+v_1\right>_{\bbk} +
   \left<v_3,\ldots,v_{r_1},v_{r_1+1}+v_1,v_{r_1+2},\ldots,v_c\right>_{\bbk}.
\] 
Assume that there is a non-trivial linear combination $y=\beta_0 e_1^0 + \ldots + \beta_{r_1-2} e_1^{r_1-2}$ such that $y\in\g^e$. Take the smallest $k\ge 0$ with $\beta_k\ne 0$. Then 
$y v_{r_1+1-k} \in \beta_k v_{r_1+1} + \left<v_{r_1+2},\ldots,v_c\right>_{\bbk}$. 
Here $r_1+1-k\ge 3$ and $v_{r_1+1}-v_1\in R(y)$, a contradiction! 
\end{proof}

\begin{rmk} \label{rmk-limits}
{\sf (i)} The strategy used in the proof of Theorem~\ref{strong} is suggested by a connection between 
MF- and GT-subalgebras. Namely,    
by a result of Vinberg, $\eus C$ can be realised as a limit of MF-subalgebras. That is, if
\[
   a(t)= E_{11}+tE_{22}+\ldots + t^{n-1} E_{nn} , 
\]
then $\lim_{t\to 0} \eus F_{a(t)}=\eus C$ for  the chain as above, see~\cite[6.4]{v:sc}. 
Even more explicitly, in $ \mathbb P \bigl(\gS^{k-m}(\gln)\bigr)$, we have 
$\lim_{t\to 0} \left<\partial^m_{a(t)} \Delta_k\right>=\left<\Delta^{[m]}_{k-m}\right>$, cf.~\cite[Ex.~5.5]{m-y}. 

The properties of $\eus F_{a(t)}$ and its restriction to $\co$, see Proposition~\ref{complete} and 
Example~\ref{s(i)-A}, suggest how to construct bases for  $\textsl{d}_e \eus C$ and 
$\textsl{d}_e (\eus C|_{\co})=(\textsl{d}_e \eus C)/((\gln)^e\cap \textsl{d}_e\eus C)$.   
Indeed, as we have seen in the proof of Theorem~\ref{strong}, the differentials of the coloured 
elements $\Delta_k^{[m]}$ form a basis of $\textsl{d}_e\eus C$.
By the definition of a colour pattern, $\textsl{d}_e (\partial_{a(t)}^{k} H_i) \in(\gln)^e$ for 
the red elements $\partial_{a(t)}^{k} H_i$. From this one can deduce that the differentials 
$\textsl{d}_e \Delta_k^{[m]}$ with red $\Delta_k^{[m]}$ form a basis of 
$\textsl{d}_e\eus C \cap (\gln)^e$. 
The uncoloured elements $\Delta_k^{[m]}$ restrict to zero on $\co$.  \\[.3ex]   \indent
{\sf (ii)} Let $\eus A=\lim_{t\to 0} \eus F_{a(t)}$ with $a(t)\in\gt t$ be a limit in the sense of 
\cite[6.4]{v:sc}. According to \cite{t:max},
$\dim\textsl{d}_x \eus A=\bb(\gt g)$ for each $x\in\eus K$, where 
$\eus K$ is the Kostatn section as in the proof of Theorem~\ref{thm-s}.
Therefore $\eus A$ is complete on any regular orbit, cf. Lemma~\ref{obvious}. 
\end{rmk}

\begin{thm}    \label{gt-com}
The GT-subalgebra $\eus C$ is complete on every adjoint orbit of $G=\GL_n$. 
\end{thm}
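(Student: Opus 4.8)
The plan is to obtain the theorem as a quick consequence of two results already in place: Theorem~\ref{strong}, which establishes completeness of $\eus C$ on every \emph{nilpotent} orbit, and Proposition~\ref{lm-c-n}, which propagates completeness of a homogeneous subalgebra from nilpotent orbits to arbitrary ones. All the genuine work has been done in Theorem~\ref{strong}: the explicit construction of strongly nilpotent representatives adapted to the chain~\eqref{eq:chain} and the dimension counts for $\textsl{d}_e\eus C$ and $\textsl{d}_e\eus C\cap\g^e$; here nothing more than bookkeeping remains.

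First I would recall the relevant structural features of $\eus C$. By construction $\eus C\subset\gS(\gln)$ is generated by the homogeneous polynomials $\Delta_k^{[m]}$, so it is a homogeneous subalgebra; in particular $\textsl{d}_{tx}\eus C=\textsl{d}_x\eus C$ for every $t\in\bbk^\times$ and every $x\in\g^*$. Moreover, as noted after Definition~\ref{def:str-nilp}, $\eus C=\gr(\widetilde{\eus C})$ is Poisson-commutative, $\{\eus C,\eus C\}=0$, whence $\eus C|_{Gx}$ is Poisson-commutative for every $x$. By Theorem~\ref{strong}, for each nilpotent orbit $\co=Ge\subset\g^*$ one has $\dim\textsl{d}_e\eus C=n+\tfrac12\dim\co$, and since $\dim(\textsl{d}_e\eus C\cap\g^e)=n$, the restriction $\eus C|_{Ge}$ contains a complete family in involution; equivalently $\trdeg(\eus C|_{Ge})=\tfrac12\dim(Ge)$. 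As $\N/G$ is finite, this holds for all nilpotent orbits at once.

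Next I would apply Proposition~\ref{lm-c-n} to $\eus A=\eus C$. Since $\eus C$ is homogeneous and $\trdeg(\eus C|_{Ge})=\tfrac12\dim(Ge)$ for every nilpotent $e\in\g^*$, the proposition yields $\trdeg(\eus C|_{Gx})=\tfrac12\dim(Gx)$ for every $x\in\g^*$. Because $\eus C|_{Gx}$ is Poisson-commutative, this transcendence-degree equality is precisely the statement that $\eus C$ is complete on $Gx$. Identifying adjoint and coadjoint orbits via the fixed isomorphism $\g\simeq\g^*$, we conclude that $\eus C$ is complete on every adjoint orbit of $\GL_n$.

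There is essentially no remaining obstacle; the only subtlety, already dispatched, is Theorem~\ref{strong}. One may also package the argument through Corollary~\ref{cl-nilp}: by Borho--Kraft each sheet of $\g$ contains a unique nilpotent orbit, $\eus C$ is complete on that orbit by Theorem~\ref{strong}, hence $\eus C$ is complete on every orbit in the sheet, and every orbit lies in some sheet.
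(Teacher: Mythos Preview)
Your proposal is correct and follows exactly the paper's own two-line argument: completeness on nilpotent orbits is Theorem~\ref{strong}, and Proposition~\ref{lm-c-n} (applied to the homogeneous, Poisson-commutative algebra $\eus C$) propagates this to all orbits. The additional packaging via Corollary~\ref{cl-nilp} is also sound and equivalent.
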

\begin{proof}
For a nilpotent orbit $Ge$, the result follows from Theorem~\ref{strong}. 
Proposition~\ref{lm-c-n} immediately extends it to all orbits.  
\end{proof}

\begin{thm}           \label{thm-co}
The action of\/ $\GL_{n-1}$ on each adjoint orbit\/ $\GL_n x\subset \gln$   
is coisotropic.  
\end{thm}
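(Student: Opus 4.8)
The plan is to derive coisotropy of the $\GL_{n-1}$-action from the completeness of the Gelfand--Tsetlin subalgebra $\eus C$ (Theorem~\ref{gt-com}) by feeding it into criterion {\sf (NF)} of Section~\ref{subsec-cois}. The observation that makes this work is that, once the Casimirs $\Delta_1,\dots,\Delta_n$ are discarded, every remaining generator of $\eus C$ is a Noether integral for the $\GL_{n-1}$-action.

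First I would fix $M=\GL_n x$ and $Q=\GL_{n-1}$ and identify the moment map. Under the identification $\gln\simeq\gln^*$, the (globally defined, Hamiltonian) $Q$-moment map $\mu\colon M\to\gt{gl}_{n-1}^*\simeq\gt{gl}_{n-1}$ sends a matrix $A\in M$ to its south-east corner $A_1\in\gt{gl}_{n-1}$. Since $A_m=(A_1)_{m-1}$ for every $m\ge 1$, the value $\Delta_k^{[m]}(A)=\Delta_k^{[m]}\bigl((A_1)_{m-1}\bigr)$ depends on $A$ only through $\mu(A)$; hence $\Delta_k^{[m]}|_M\in\mu^*\bigl(\gS(\gt{gl}_{n-1})\bigr)$ is a Noether integral for all $m\ge 1$ and all $k$. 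The remaining generators $\Delta_1=\Delta_1^{[0]},\dots,\Delta_n=\Delta_n^{[0]}$ of $\eus C$ are $\GL_n$-invariant, hence constant on the single orbit $M$.

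Next I would assemble a complete family. As the $\Delta_k^{[0]}$ restrict to constants, $\eus C|_M$ is generated by the Noether integrals $\{\Delta_k^{[m]}|_M\mid m\ge 1\}$. By Theorem~\ref{gt-com}, $\trdeg(\eus C|_M)=\frac{1}{2}\dim M=:d$, so some $d$ of these generators, say $F_1,\dots,F_d$, are algebraically independent; being a subset of $\eus C$ they pairwise Poisson-commute on $M$, and $d=\frac{1}{2}\dim M$, so $\{F_1,\dots,F_d\}$ is a complete family in involution on the symplectic variety $M$ consisting entirely of Noether integrals for the $\GL_{n-1}$-action. Applying {\sf (NF)} then yields that the action of $\GL_{n-1}$ on $M$ is coisotropic.

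I do not expect a serious obstacle here: the real content is already in Theorem~\ref{gt-com} (which in turn rests on Theorem~\ref{strong}), and the rest is formal. The only points needing (minor) care are that discarding the Casimirs $\Delta_k^{[0]}$ cannot drop the transcendence degree below $\frac{1}{2}\dim M$ --- immediate, since those polynomials are constant on one $\GL_n$-orbit --- and that the surviving generators genuinely factor through the $\GL_{n-1}$-moment map, which follows from the nesting $A_m=(A_1)_{m-1}$ of corners. For context, this recovers \eqref{eq-cois} in the nilpotent case and upgrades the strong-Gelfand-pair property {\sf (Cois)} from closed orbits to arbitrary $\GL_n$-orbits.
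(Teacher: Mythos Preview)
Your argument is correct and matches the paper's proof essentially verbatim: both discard the Casimirs $\Delta_1,\dots,\Delta_n$ (constant on an orbit), observe that the remaining generators lie in $\gS(\gt{gl}_{n-1})$ and hence are Noether integrals for the $\GL_{n-1}$-action, invoke Theorem~\ref{gt-com} for completeness, and then apply {\sf(NF)}. Your version is slightly more explicit about the moment map and the nesting $A_m=(A_1)_{m-1}$, but there is no substantive difference.
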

\begin{proof}
By Theorem~\ref{gt-com}, $\eus C$ is complete on every adjoint orbit. More precisely, since 
$\Delta_1,\dots,\Delta_n$ are constant on the orbits, the proper subalgebra 
$\eus C\cap \gS(\gt{gl}_{n-1})$ is complete on every orbit $\GL_n x\subset\gln$. 
This family consists of Noether integrals. 
The discussion in Section~\ref{subsec-cois} and, in particular, assertion {\sf (NF)}  
show that the action of $\GL_{n-1}$ on $\GL_n x$ is coisotropic.
 \end{proof}

\begin{rmk}        \label{rem-el}
{\sf (i)} Note that Theorem~\ref{gt-com} provides a new unusual   
proof of Elashvili's conjecture in type {\sf A}. The 
argument goes as follows. Take  $x\in\gln^*$ such that $(\gln)^x\ne \gln$. Since $\eus C$ is complete on  
$\GL_n x$ and $\eus C=\lim_{t\to 0} \eus F_{a(t)}$, the MF~subalgebra 
$\eus F_{a(t)}$ is complete on $\GL_n x$ for at least one $t\in\bbk^{\times}$. 
Then according to Lemma~\ref{comp-semi}, $\ind(\gln)^x=\rk\gln$.  
\\[.3ex]   \indent
{\sf (ii)} Theorem~\ref{gt-com} has a different, more sophisticated and inductive line of argument 
that does not involve the direct calculation of Theorem~\ref{strong}.
Suppose that the statement holds for $\GL_{n-1}$. Take a nilpotent orbit $Ge\subset \g^*$. 
The Gelfand--Tsetlin subalgebra of $\gS(\gt{gl}_{n-1})$ separates generic $\GL_{n-1}$-orbits on the image $\mu(Ge)\subset\gt{gl}_{n-1}^*$ and is complete 
on each orbit of $\GL_{n-1}$. It can be deduced  
from~\eqref{eq-cois} that the Gelfand--Tsetlin 
subalgebra of $\gS(\gt{gl}_{n-1})$ is complete on $Ge$. Hence $\eus C$ is complete on $Ge$. By Proposition~\ref{lm-c-n}, $\eus C$ is complete on every adjoint orbit.  
\end{rmk}

\subsection{$\bl$-systems}   \label{subs:lambda}
In their approach to GT integrable systems, Guillemin and Sternberg prefer to deal with eigenvalues 
of Hermitian matrices (i.e., piecewise smooth functions)~\cite{gs1}. 
Take the compact form $\gt k=\gt u_n$ and identify $\gt k^*$ with $i\gt u_n$. Now the 
eigenvalues $\{\lambda_k\}$ of $A\in\gt k^*$ are real numbers. 
Let $\bl_k$  with $1\le k\le n$ be the corresponding functions on $\gt k^*$, 
i.e., $\bl_k(A)=\lambda_k$, and likewise for $\bl_k^{[m]}$.  
The completely integrable system on $KA\subset\gt k^*$ is given by the restrictions 
of $\{ \bl_k^{[m]}\mid 1\le m<n \ \& \ 1\le k\le n{-}m\}$. We call it the 
$\bl$-system. 

There is an obvious connection between $\eus C$ and the $\bl$-system. 
Let $\sigma_k$ be the $k$-th elementary symmetric polynomial.   
If one defines $\bl_k$ over $\bbk$ or considers $\Delta_k$ as 
real valued functions on $\gt k^*$, then $\Delta_k=\sigma_k(\bl_1,\ldots,\bl_n)$.  
Take $A\in\gt u_n^*\subset\gt{gl}_n(\mathbb C)^*$. 
Using a standard argument, one proves that 
\begin{equation}      \label{eq-l}
  \text{the $\bl$-system is complete on  ${\rm U}_n A$} \ \Longleftrightarrow  \ \eus C \
\text{ is complete on } {\rm GL}_n(\mathbb C)A. 
\end{equation}
Moreover, we see that there is a connection between  the $\boldsymbol{\lambda}$-system and 
the colour patterns used in the proof of Theorem~\ref{strong}.

Until the end of this section, assume that $\bbk=\mathbb C$ and 
therefore $\GL_n=\GL_n(\mathbb C)$.
Let $\co$ be the dense orbit in the associated cone of $\GL_n A$. 
Then for each $m$,  the number of elements $\bl_k^{[m]}$  with $1\le k \le n-m$ that are functionally 
independent on ${\rm U}_n A$ is equal to the number of 
green elements $\Delta_k^{[m]}$ in the colour pattern associated with $\co$. 
This connection explains also the choice of $e'$ in the proof of Theorem~\ref{strong}. 

Let $\lambda_1\le \ldots\le \lambda_n$ be the eigenvalues of $A$. 
Let $A_1\in\gt u_{n-1}^*$ denote the restriction of $A$ to $\gt u_{n-1}$. 
If $\mu_1\le\ldots\le\mu_{n-1}$ are the eigenvalues of $A_1$, then
$\lambda_i\le \mu_i \le \lambda_{i+1}$.  
For a non-regular orbit ${\rm U}_n A$, $\lambda_{i+1}=\lambda_i$ for some $i$. Therefore, gathering 
together equal eigenvalues of $A$, we get a partition of $n$ different from $(1^n)$. 
The parts of the dual partition,
say $r_1\ge \ldots \ge r_t> 0$, are the sizes of the Jordan blocks of $e\in\co$~\cite{kraft}. 
Suppose that $A$ is a generic representative of  ${\rm U}_n A$. 
The key point in the complete integrability of $\boldsymbol{\lambda}$ on ${\rm U}_n A$~\cite{gs1} is 
that the eigenvalues of $A_1$ are not equal if they do not have to be. In other words, 
the associated cone of $\GL_{n-1} A_1$ is the closure of
$\GL_{n-1} e'$, where $e'$ is given by   
the partition  $(r_1+r_2-1,r_3,\ldots,r_t)$. 

\begin{ex}
Let $A\in\gt u_7^*$ have the eigenvalues 
\[
  \lb_1=\lb_2=\lb_3<\lb_4=\lb_5<\lb_6=\lb_7\,.
\]
This means that  $\mu_1=\mu_2$, but there are no other necessary equalities among the eigenvalues of $A_1$. In terms of partitions, this set of eigenvalues gives rise to
the partition $(3,2,2)$, with the dual partition $\boldsymbol{r}=(3,3,1)$. 
Then $\boldsymbol{r}_1=(5,1)$, and its dual is $(2,1,1,1,1)$. This last partition describes the coincidence
of 
the eigenvalues of $A_1$.

On the orbit ${\rm U}_7 A$, we have $\mu_1=\mu_2=\lambda_1$ as well as $\mu_4=\lambda_4$ and 
$\mu_6=\lambda_6$. 
Among the function $\bl_k^{[1]}$, only two, namely $\bl_3^{[1]}$ and $\bl_5^{[1]}$, are
functionally independent. 
According to the colour pattern used in the proof of Theorem~\ref{strong}, the images of the differentials 
$\textsl{d}_A\boldsymbol{\lambda}_k^{[1]}$ with $1\le k\le 6$ 
span a subspace of dimension $2$  in the quotient of $T^*_A \gt u_7^*$ by $\gt u_7^A$. 
\end{ex}

\section{Corank on closures of sheets and the orthogonal case}
\label{sect:5}

Let $(M,\omega)$ and $Q$ be as in Section~\ref{subsec-cois}.  
Set $U:=\{y\in M \mid \dim(\q y)=\max_{x\in M}\dim(\q x)\}$.

\begin{df}\label{def-def}
The {\it defect} of the $Q$-action on $M$ is 
\[
    {\sf def}(M)={\sf def}_Q(M)=\min_{y\in U} \dim (\q y \cap (\q y)^\perp) ;
\]
and the {\it corank} of the $Q$-action is
$
   \cork(M)=\cork_Q(M):=\max_{y\in U}\rk({\omega_y}\vert_{(\q y)^{\perp}})
$.
\end{df}
If $x\in U$, then $\cork(M)=\dim M-\dim(\q x)-{\sf def}(M)$.
We omit the indication of $Q$ if it is clear from the context.
The coisotropic actions are of corank zero.  

From now on, suppose that $M$ is an irreducible algebraic variety defined over $\bbk$. Then 
the image $\mu(M)\subset \q^*$ is a $Q$-stable subset, which is dense in its closure.  
Moreover, $\overline{\mu(M)}$ is irreducible. 
For an irreducible $Q$-stable closed subset $Y\subset \q^*$, set 
\begin{equation}          \label{bY}
\bb(Y)= \dim Y - \frac{1}{2} \max_{y\in Y} \dim (\q y). 
\end{equation}
The transcendence degree of a Poisson-commutative subalgebra of $\bbk(Y)$ is bounded above 
by $\bb(Y)$. 
Note that $\bb(\q^*)=\bb(\q)$ is just the ``magic number''. Note also that 
$\max_{y\in\overline{\mu(M)} }\dim(\gt q y)=\max_{y\in\mu(M)} \dim (\q y)$. 
Set $\bb(\mu(M))=\bb(\overline{\mu(M)})$. 

The equality  $\ker\textsl{d}_x\mu =(\gt q x)^\perp$ that has been discussed in 
Section~\ref{subsec-cois}  leads to the following formulas:
\begin{align}
 &  \dim \overline{\mu(M)}=\dim (\q x) \ \text{ for } \ x\in U;   \label{cork-dim}  \\
 &  \max_{y\in \mu(M)} \dim (\q y)  = \dim (\q x)-{\sf def}(M) \ \text{ for } \ x\in U; \label{cork-max}\\
 &  2\bb(\mu(M)) + \cork(M) =  2\dim (\q x) -  \dim (\q x) + {\sf def}(M)  + \cork(M) = \dim M.  \label{eq-cork-pr} 
\end{align} 

By \cite{int}, for any $Q$-stable irreducible closed subset $Y\subset\q^*$, there is 
a subalgebra $\eus A\subset \gS(\q)$ such that  $\{\eus A,\eus A\}$ vanishes on $Y$ and
$\trdeg(\eus A|_Y)=\bb(Y)$.   If   $Y=\overline{\mu(M)}$ and the action of $Q$ on $M$ is coisotropic, then  
the pull-back $\mu^*(\eus A)$ contains a complete  family of functions, Noether integrals,  on  $M$.   
We only need these statements if $Q$ is reductive, in which case the proof simplifies drastically. 

\begin{lm}[{cf. \cite[Sect.~3]{int}}]      \label{int-N}
Suppose that $Q$ is reductive. Then there is $a\in\q^*$ such that 
$\trdeg(\eus F_a|_Y)=\bb(Y)$ for the MF-subalgebra $\eus F_a$ associated with $a$. 
\end{lm}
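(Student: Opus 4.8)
The plan is to reduce the assertion to completeness of an MF-subalgebra on a single generic coadjoint orbit, where it is already available, and then propagate it to all of $Y$ by a fibre-dimension argument. Set $2r=\max_{z\in Y}\dim(\q z)$, so that $\bb(Y)=\dim Y-r$ by \eqref{bY}. Since $Q$ is reductive, Elashvili's conjecture holds for $\q$, and Proposition~\ref{mf-generic} (applied to $Q$) provides $a\in\q^*_{\sf reg}$ such that $\eus F_a$ is complete on \emph{every} coadjoint orbit of $Q$. I claim this $a$ works, i.e. $\trdeg(\eus F_a|_Y)=\bb(Y)$.

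The upper bound is immediate: $Y$ is a union of coadjoint orbits, hence a Poisson subvariety of $\q^*$, so $\eus F_a|_Y$ is Poisson-commutative and $\trdeg(\eus F_a|_Y)\le\bb(Y)$ by the estimate recorded after \eqref{bY}. For the reverse inequality I would pass to morphisms. Since $\gS(\q)^\q\subseteq\eus F_a$ and both algebras are finitely generated, the adjoint quotient $\kappa\colon\q^*\to\q^*\md Q$ factors as $\kappa=\rho\circ\pi$ with $\pi\colon\q^*\to\spe\eus F_a$ and $\rho\colon\spe\eus F_a\to\q^*\md Q$. Put $Z=\overline{\pi(Y)}$ and $K=\overline{\kappa(Y)}$, so that $\dim Z=\trdeg(\eus F_a|_Y)$, $\dim K=\trdeg(\gS(\q)^\q|_Y)$, and $\rho$ maps $Z$ dominantly onto $K$. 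It then suffices to establish two inequalities: (a) $\dim K\ge\dim Y-2r$, and (b) $\dim Z\ge\dim K+r$; together they give $\trdeg(\eus F_a|_Y)=\dim Z\ge\dim Y-r=\bb(Y)$.

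For (a): a fibre $\kappa^{-1}(\kappa(y))$ consists of elements whose semisimple part is conjugate to that of $y$, hence (the centraliser of a semisimple element being reductive) it meets only finitely many coadjoint orbits; so $\kappa^{-1}(\kappa(y))\cap Y$ is a finite union of orbit closures, each of dimension $\le 2r$, and therefore has dimension $\le 2r$. Choosing $y$ generic in $Y$, so that this fibre has the generic dimension $\dim Y-\dim K$, gives $\dim Y-\dim K\le 2r$. For (b): whenever $\dim(Qy)=2r$ we have $\overline{Qy}\subseteq\kappa^{-1}(\kappa(y))$, hence $\pi(\overline{Qy})\subseteq\rho^{-1}(\kappa(y))$, and
\[
   \dim\pi(\overline{Qy})=\trdeg(\eus F_a|_{Qy})=\frac{1}{2}\dim(Qy)=r
\]
because $\eus F_a$ is complete on $Qy$. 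Such $y$ form a dense open subset of $Y$ and $\kappa|_Y$ is dominant, so the set $\{p\in K:\dim\rho^{-1}(p)\ge r\}$ contains a dense open subset of $K$; intersecting it with the dense open locus on which $\rho\colon Z\to K$ has minimal fibre dimension $\dim Z-\dim K$ yields $\dim Z-\dim K\ge r$, which is (b).

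The step that needs genuine care is (a): one must verify that a $Q$-stable irreducible closed subvariety of $\q^*$ on which $\kappa$ is constant is the closure of a single coadjoint orbit (it is a finite union of orbits, hence has a unique orbit of maximal dimension, which must be dense in it). This is exactly where the reductivity of $Q$ is used, and it is also what fails for a general $Q$ and forces the more elaborate construction of \cite{int}; I expect no other serious difficulty, the remaining ingredients — the identification of transcendence degrees with dimensions of images of $\pi$ and $\kappa$, and the fibre-dimension bookkeeping in (b) — being routine.
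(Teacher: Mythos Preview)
Your argument is correct, and it shares with the paper the one essential ingredient: that for reductive $Q$ each fibre of the adjoint quotient meets only finitely many orbits, so $\trdeg(\gS(\q)^\q|_Y)=\dim Y-2r$ (your (a)). Where you diverge is in how this is combined with completeness on orbits. The paper works pointwise at a single generic $y\in Y$: it only needs Bolsinov's theorem to find $a$ with $\eus F_a$ complete on that \emph{one} orbit $Qy$, and then observes directly that $\textsl{d}_y(\eus F_a|_Y)$ contains $\textsl{d}_y(\gS(\q)^\q|_Y)$ (dimension $\dim Y-2r$, annihilating $T_y(Qy)$) and surjects onto $\textsl{d}_y(\eus F_a|_{Qy})$ (dimension $r$), the two being in direct sum; hence $\trdeg(\eus F_a|_Y)\ge(\dim Y-2r)+r=\bb(Y)$ in one line. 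You instead globalise: you invoke the stronger Proposition~\ref{mf-generic} to secure completeness on \emph{every} orbit, then run a two-step fibre-dimension argument through the factorisation $\q^*\to\spe\eus F_a\to\q^*\md Q$. The paper's route is more economical---it avoids both the appeal to Proposition~\ref{mf-generic} (which itself rests on Elashvili's conjecture and the finiteness of nilpotent orbits) and the bookkeeping of (a)--(b)---while yours has the virtue of making the geometry of $\pi$ and $\rho$ explicit and would adapt more readily to situations where one wants to vary $Y$.
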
 
\begin{proof}
Since $Y\subset\q^*$, each fibre of the quotient map $Y\to Y/\!\!/ Q$ contains an open orbit. 
Therefore $\dim Y/\!\!/ Q = \dim Y - r$, where $r=\max_{y\in Y}\dim (\gt q y)$. 
Hence also $\dim\textsl{d}_y(\gS(\q)^{\q})=\dim Y-r$ for generic $y\in Y$. 
Fix one $y\in Y$ having this property. 
There is $a\in\q^*$ such that $\eus F_a$ is complete on $Qy$, see~\cite{bols} and 
Section~\ref{sect:2}.  Since $\gS(\q)^{\q}\subset\eus F_a$, we conclude that 
$\trdeg({\eus F_a}|_Y)=r+\frac{1}{2}\dim(Qy)=\bb(Y)$. 
\end{proof}

\subsection{Numerical invariants of sheets}
Let $H$ be an arbitrary reductive subgroup of a connected reductive group  $G$. Let $S\subset\g$ 
be a $G$-sheet and  $Ge$ the unique nilpotent orbit in $S$, see~\cite[Sect.~5.8,\,Kor.(a)]{bokr}.  
For any coadjoint orbit $Gx\subset\g^*$, the moment map w.r.t. $H$,
$\mu\!: Gx\to \gt h^*$, is given by the restriction $\g^*\to\gt h^*$ of linear functions. 
The dual map (co-morphism) $\mu^*$ is the canonical inclusion $\gS(\gt h)\subset \gS(\g)$.

\begin{lm}      \label{cork-sheet}
For any $G$-orbit ${\mathcal O}\subset S$, one has 
$\cork_H({\mathcal O})\le \cork_H(Ge)$.
\end{lm}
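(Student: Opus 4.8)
The plan is to reduce the inequality to the transcendence‑degree estimate \eqref{eq-con2} that was extracted, for homogeneous subalgebras, inside the proof of Proposition~\ref{lm-c-n}, combined with the dimension identity \eqref{eq-cork-pr} and with Lemma~\ref{int-N}. First I would dispose of the degenerate case ${\mathcal O}\subset\N$: then ${\mathcal O}$ is a nilpotent orbit, hence coincides with the unique nilpotent orbit of its own sheet, which is $Ge$, and the assertion is trivial. So assume ${\mathcal O}=Gx$ with $x\notin\N$. Every orbit in a sheet has the same dimension, so $\dim{\mathcal O}=\dim(Ge)$; and the $H$‑moment map is in both cases the restriction $\g^*\to\gt h^*$, so \eqref{eq-cork-pr} yields $\cork_H({\mathcal O})=\dim{\mathcal O}-2\bb(\mu({\mathcal O}))$ and $\cork_H(Ge)=\dim(Ge)-2\bb(\mu(Ge))$. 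Hence it suffices to prove $\bb(\mu(Ge))\le\bb(\mu({\mathcal O}))$.

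Next I would choose the parameter cleverly. Applying Lemma~\ref{int-N} to the $H$‑stable irreducible closed set $Y=\overline{\mu(Ge)}\subset\gt h^*$, there is $a\in\gt h^*$ with $\trdeg(\eus F_a|_Y)=\bb(Y)=\bb(\mu(Ge))$ for the MF‑subalgebra $\eus F_a\subset\gS(\gt h)$. The decisive feature of $\eus F_a$ is that it is \emph{globally} Poisson‑commutative in $\gS(\gt h)$, hence — since $\gt h$ is a subalgebra of $\g$ and the Poisson bracket of $\gS(\gt h)$ is induced from that of $\gS(\g)$ — also Poisson‑commutative as a subalgebra of $\gS(\g)$; moreover it is a homogeneous subalgebra, its generators $\partial_a^k F$ ($F\in\gS(\gt h)^{\gt h}$) being homogeneous. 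Because the elements of $\eus F_a$ are Noether integrals (they depend only on the $\gt h^*$‑component), restriction to a $G$‑orbit factors through $\mu$, so $\trdeg(\eus F_a|_{Ge})=\trdeg(\eus F_a|_Y)=\bb(\mu(Ge))$, while $\trdeg(\eus F_a|_{\mathcal O})=\trdeg(\eus F_a|_{\overline{\mu({\mathcal O})}})\le\bb(\mu({\mathcal O}))$ by the transcendence‑degree bound (the restriction of $\eus F_a$ being Poisson‑commutative on $\overline{\mu({\mathcal O})}$).

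It then remains to compare $\trdeg(\eus F_a|_{Ge})$ with $\trdeg(\eus F_a|_{\mathcal O})$, and this is the heart of the argument. The orbit $Ge$ is the nilpotent orbit sitting in the associated cone $\overline{\bbk^\times{\mathcal O}}$ with $\dim(Ge)=\dim{\mathcal O}$ — which, by the discussion following Proposition~\ref{lm-c-n}, is precisely the unique nilpotent orbit of the sheet of ${\mathcal O}$. Since $\eus F_a$ is homogeneous and $x\notin\N$, the inequality \eqref{eq-con2} (from the proof of Proposition~\ref{lm-c-n}) gives $\trdeg(\eus F_a|_{Ge})\le\trdeg(\eus F_a|_{\mathcal O})$. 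Chaining everything together,
$\bb(\mu(Ge))=\trdeg(\eus F_a|_{Ge})\le\trdeg(\eus F_a|_{\mathcal O})\le\bb(\mu({\mathcal O}))$,
and therefore $\cork_H({\mathcal O})\le\cork_H(Ge)$, as required.

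The only genuinely non‑formal ingredient is \eqref{eq-con2}, i.e.\ the associated‑cone transcendence comparison borrowed from Proposition~\ref{lm-c-n}; everything else is bookkeeping. The main obstacle, then, is really hidden in Proposition~\ref{lm-c-n} rather than in this lemma — here one just needs the right packaging. The two points I would make sure to verify are that $H$ is reductive (so that Lemma~\ref{int-N} applies and the optimal parameter $a$ exists) and that $x\notin\N$ (so that \eqref{eq-con2} applies); both hold under the standing hypotheses of this section.
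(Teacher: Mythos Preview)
Your proof is correct and follows essentially the same approach as the paper: apply Lemma~\ref{int-N} to $Y=\overline{\mu(Ge)}$ to produce a homogeneous Poisson-commutative $\eus F_a\subset\gS(\gt h)\subset\gS(\g)$ with $\trdeg(\eus F_a|_{Ge})=\bb(Y)$, invoke \eqref{eq-con2} for the associated-cone comparison, bound above by $\bb(\mu({\mathcal O}))$, and translate via \eqref{eq-cork-pr}. Your additional remarks (disposing of the nilpotent case, explaining why restriction factors through $\mu$) are fine but not essential.
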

\begin{proof}
Set $Y=\overline{\mu(Ge)}$. This is an $H$-stable irreducible closed subset of $\gt h^*$.
By Lemma~\ref{int-N}, there is $a\in\gt h^*$ such that 
$\trdeg({\eus F_a}|_Y)=\bb(Y)$ for the MF-subalgebra $\eus F_a\subset\gS(\gt h)$.
Note that $\eus F_a$ is a homogeneous  Poisson-commutative subalgebra of $\gS(\g)$. 
For each 
$Gx\subset S$, the orbit $Ge$ is  dense in the associated cone  of $Gx$. 
Making use of \eqref{eq-con2}, we write 
\[
    \bb(Y) = \trdeg({\eus F_a}|_{Ge}) \le \trdeg({\eus F_a}|_{Gx}) \le \bb(\mu(Gx)).
\]
By \eqref{eq-cork-pr}, we have 
$\cork_H(M)=\dim M-2\bb(\mu(M))$. Since $\dim(Gx)=\dim(Ge)$, the result follows. 
\end{proof}

\begin{lm}       \label{cork-r}
For any $G$-orbit ${\mathcal O}\subset \g^*$, the corank 
$\cork_H({\mathcal O})$ is equal to the rank of $\hat x$ on 
$\textsl{d}_x (\gS(\g)^H)$ for a generic $x\in{\mathcal O}$.  
\end{lm}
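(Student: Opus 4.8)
The plan is to compute $\cork_H(\co)$ directly from Definition~\ref{def-def}, translating everything through the moment map $\mu\colon \co\to\gt h^*$ and its co-morphism. Fix a generic $x\in\co$. By the discussion in Section~\ref{subsec-cois}, the symplectic form on $\co$ at $x$ is $\hat x$ restricted to $T^*_x\co$, and the key identity $\ker(\textsl{d}_x\mu)=(\gt h x)^{\perp}$ holds, where $\gt h x = T_x(Hx)$ and the orthogonal complement is taken inside $T_x\co$ with respect to $\omega_x$. Passing to the cotangent side via the duality~\eqref{cois-3} between $\omega$ and the Poisson tensor $\pi$, the annihilator $\Ann(\gt h x)\subset T^*_x\co$ is exactly $\textsl{d}_x(\bbk(\co)^H)$ for generic $x$ (Rosenlicht), and $\cork_H(\co)=\rk\bigl(\pi(x)\vert_{\Ann(\gt h x)}\bigr)$. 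So the task reduces to identifying $\Ann(\gt h x)\subset T^*_x\co$ with the image of $\textsl{d}_x(\gS(\g)^H)$ and computing the rank of $\hat x$ on it.

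First I would unravel $T^*_x\co$: under $\g^*\simeq\g$ and $T^*_x\g^*\simeq\g$, the canonical projection $\psi_x\colon \g\to T^*_x\co$ has kernel $\g^x$, and $\hat x$ descends to a nondegenerate form on $T^*_x\co$. A function $F\in\gS(\g)$ restricts to an $H$-invariant on $\co$ iff $\textsl{d}_x F$, viewed in $\g$, is $H$-fixed modulo $\g^x$; in particular for $F\in\gS(\g)^H$ the differential $\textsl{d}_x F$ lies in $\g^{\gt h}$ already, and hence $\psi_x(\textsl{d}_x(\gS(\g)^H))\subseteq \textsl{d}_x(\bbk[\co]^H)\subseteq\textsl{d}_x(\bbk(\co)^H)=\Ann(\gt h x)$. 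For the reverse inclusion up to the rank computation, I would use that $H$ is reductive: generic $H$-orbits in $\co$ are separated by restrictions of global $H$-invariants $\gS(\g)^H$, so $\textsl{d}_x(\gS(\g)^H)$ and $\textsl{d}_x(\bbk(\co)^H)$ have the same image in $T^*_x\co$ for generic $x$ — this is the same argument used in the proof of Theorem~\ref{equiv-new} (closed generic $H$-orbits, $\bbk[\co]^H$ is the restriction of $\bbk[\g^*]^H$). Therefore $\psi_x(\textsl{d}_x(\gS(\g)^H))=\Ann(\gt h x)$ for generic $x$.

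It then remains to compare the two rank quantities. On $T^*_x\co$ the form is $\hat x$ itself (transported by $\psi_x$), and $\pi(x)$ is its inverse transpose; a standard linear-algebra fact about a nondegenerate skew form and a subspace $W$ together with its ``orthogonal'' $W^{\perp}$ gives $\rk(\pi(x)\vert_{\Ann W}) = \rk(\omega_x\vert_{W^{\perp}})$, so $\cork_H(\co)=\rk\bigl(\hat x\vert_{\psi_x(\textsl{d}_x(\gS(\g)^H))}\bigr)$. Finally, since $\ker\psi_x=\g^x\subseteq\ker\hat x$, the rank of $\hat x$ on $\psi_x(\textsl{d}_x(\gS(\g)^H))$ equals the rank of $\hat x$ on $\textsl{d}_x(\gS(\g)^H)\subseteq\g$ itself. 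This yields the claimed equality. I also need to check that the ``max over $U$'' in Definition~\ref{def-def} is attained at generic $x\in\co$, i.e.\ that $\dim(\gt h x)$, $\textsl{d}_x(\gS(\g)^H)$, and the relevant rank are all generically constant on $\co$; this is routine upper-semicontinuity. The main obstacle I anticipate is making the identification $\Ann(\gt h x)=\psi_x(\textsl{d}_x(\gS(\g)^H))$ precise at a \emph{generic} point simultaneously with the genericity needed for Rosenlicht's theorem and for the corank being computed on $U$ — i.e.\ intersecting finitely many dense open subsets of $\co$ and checking the argument is insensitive to the choice — rather than any single hard step.
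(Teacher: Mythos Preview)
Your overall architecture is exactly the paper's: pass from $\cork_H(\co)=\rk(\omega_x\vert_{(\gt h x)^\perp})$ to the dual picture $\rk(\pi(x)\vert_{\Ann(\gt h x)})$, identify $\Ann(\gt h x)$ with $\textsl{d}_x(\bbk(\co)^H)$ via Rosenlicht, and then argue that modulo $\g^x=\ker\hat x$ this coincides with $\textsl{d}_x(\gS(\g)^H)$. The final linear-algebra reduction is fine.

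The gap is in the step ``generic $H$-orbits in $\co$ are separated by restrictions of global $H$-invariants $\gS(\g)^H$ \dots\ this is the same argument used in the proof of Theorem~\ref{equiv-new}''. That argument applies only to \emph{closed} $G$-orbits: there Luna's theorem guarantees that generic $H$-orbits in $Gx$ are themselves closed, hence separated by regular invariants, whence $\bbk(Gx)^H=\mathrm{Quot}(\bbk[Gx]^H)$ and $\bbk[Gx]^H$ is the restriction of $\gS(\g)^H$. For an arbitrary (e.g.\ nilpotent) orbit $\co$, neither statement is automatic: $\co$ is only quasi-affine, so surjectivity of $\gS(\g)^H\to\bbk[\co]^H$ is not given by reductivity of $H$ alone, and generic $H$-orbits in $\co$ need not be closed. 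The paper closes this gap in two steps: first, it invokes \cite[Prop.~2.9]{av} (based on results of Losev~\cite{Losev} on algebraic Hamiltonian actions) to obtain $\bbk(\co)^H=\mathrm{Quot}(\bbk[\co]^H)$ for \emph{every} coadjoint orbit; second, it uses the Borho--Kraft fact \cite[Lemma~3.7]{bokr} that $\bbk[\co]$ is integral over $\bbk[\overline{\co}]$, so that $\bbk[\co]^H$ is algebraic over $\bbk[\overline{\co}]^H$, and the latter, being invariants on an affine variety, \emph{is} the restriction of $\gS(\g)^H$. With these two inputs one gets $\trdeg\bbk(\co)^H=\trdeg\bigl(\gS(\g)^H\vert_{\co}\bigr)$ and hence the desired equality of differentials at a generic point. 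Your proposal needs precisely these citations; without them the key identification $\psi_x(\textsl{d}_x(\gS(\g)^H))=\Ann(\gt h x)$ is unjustified for non-closed $\co$.
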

\begin{proof}
By the definition, $\cork_H(M)=\max_{y\in U} \rk({\omega_y}|_{(\gt hy)^\perp})$.  
This number is the {\it rank of the Poisson bracket on 
$\bbk(M)^H$}. Suppose that $F_1,\ldots,F_k\in\bbk({\mathcal O})^H$ are algebraically independent 
and $k=\trdeg \bbk({\mathcal O})^H$. Whenever all $\textsl{d}_y F_i$ are defined for 
$y\in{\mathcal O}$, set $$V(y)=\left<\textsl{d}_y F_i \mid 1\le i\le k\right>_{\bbk}.$$ 
Then $\cork_H({\mathcal O})=\max_{y\in{\mathcal O}} \rk (\hat y |_{V(y)})$.
In \cite[Prop.~2.9]{av}, it is explained how to deduce from results of \cite{Losev} the fact that
$\bbk({\mathcal O})^H={\rm Quot}(\bbk[{\mathcal O}]^H)$.  
By \cite[Lemma~3.7]{bokr}, $\bbk[{\mathcal O}]$ is an integral extension of 
$\bbk[\overline{\mathcal O}]$.  Hence  
$\bbk[{\mathcal O}]^H$ is an algebraic  extension of 
$\bbk[\overline{\mathcal O}]^H$.
Summing up, $\trdeg\bbk({\mathcal O})^H=\trdeg\bbk[\overline{\mathcal O}]^H$.

Since $H$ is reductive, $\bbk[\overline{\mathcal O}]^H$ is the image of $\bbk[\g^*]^H$ under the 
restriction to $\overline{\mathcal O}$. 
Hence $V(y)=\textsl{d}_y (\gS(\g)^H) / \g^y$ on a non-empty open subset of ${\mathcal O}$.
Since $\g^y$ is the kernel of $\hat y$, the result follows. 
\end{proof}

\begin{thm}      \label{thm-d}  
Let $S\subset \g^*$ be a sheet.
\begin{itemize}
\item[\sf (i)] \ The corank of the $H$-action on $G$-orbits
does not change along $S$; 
\item[\sf (ii)] \ if a $G$-orbit $\mathcal O$ lies in $\overline{S}$, then 
$\cork_H({\mathcal O})\le\cork_H(Gx)$ with $x\in S$.  
\end{itemize}
\end{thm}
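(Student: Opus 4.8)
The plan is to reduce both parts to a single lower-semicontinuity statement for a rank function on $\g^*$, and then to invoke Lemma~\ref{cork-sheet} for the one inequality that semicontinuity does not give by itself.

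First I would fix homogeneous generators $\boldsymbol{c}_1,\dots,\boldsymbol{c}_r$ of $\gS(\g)^H$ and, for $x\in\g^*$, form the skew-symmetric $r\times r$ matrix $\Gamma(x)=\bigl(\,x([\textsl{d}_x\boldsymbol{c}_i,\textsl{d}_x\boldsymbol{c}_j])\,\bigr)_{i,j}$, whose entries are polynomials in $x$. Since $\textsl{d}_x(\gS(\g)^H)=\langle\textsl{d}_x\boldsymbol{c}_1,\dots,\textsl{d}_x\boldsymbol{c}_r\rangle$ and $\Gamma(x)$ is the Gram matrix of the form $\hat x$ on this spanning family, elementary linear algebra gives $\rk\Gamma(x)=\rk\bigl(\hat x\vert_{\textsl{d}_x(\gS(\g)^H)}\bigr)$; writing $\rho(x):=\rk\Gamma(x)$, Lemma~\ref{cork-r} then says precisely that $\cork_H(Gx)=\rho(x)$ for generic $x$ in the orbit $Gx$. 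The function $\rho$ is lower semicontinuous on $\g^*$ (the locus $\{\rho\ge k\}$ is cut out by non-vanishing of some $k\times k$ minor), and a routine computation ($\Gamma(tx)_{ij}=t^{\deg\boldsymbol{c}_i+\deg\boldsymbol{c}_j-1}\Gamma(x)_{ij}$) shows $\rk\Gamma(tx)=\rk\Gamma(x)$ for all $t\in\bbk^\times$, so $\rho$ is $\bbk^\times$-invariant. Consequently, for every irreducible subset $Z\subseteq\g^*$ the generic value of $\rho$ on $Z$ equals $\max_Z\rho$, and this number is unchanged when $Z$ is enlarged to $\bbk^\times Z$ or to $\overline{\bbk^\times Z}$. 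In particular $\cork_H(Gx)=\max_{Gx}\rho$ coincides with the generic value of $\rho$ on $\overline{\bbk^\times(Gx)}$, for every orbit $Gx$.

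For part {\sf (i)}, let $Ge$ be the unique nilpotent orbit of the sheet $S$ and let $Gx\subset S$ be arbitrary. As recalled in the proof of Lemma~\ref{cork-sheet}, $Ge$ is dense in the associated cone of $Gx$, hence $Ge\subset\overline{\bbk^\times(Gx)}$. Lower semicontinuity of $\rho$ on the irreducible variety $\overline{\bbk^\times(Gx)}$ gives
\[
  \cork_H(Ge)=\max_{Ge}\rho\ \le\ \max_{\overline{\bbk^\times(Gx)}}\rho\ =\ \cork_H(Gx),
\]
while the opposite inequality $\cork_H(Gx)\le\cork_H(Ge)$ is Lemma~\ref{cork-sheet}. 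Thus $\cork_H(Gx)=\cork_H(Ge)$ for every $Gx\subset S$, which is {\sf (i)}. For part {\sf (ii)}, let $\mathcal O\subset\overline S$; then $\overline{\bbk^\times\mathcal O}\subseteq\overline{\bbk^\times S}$, both irreducible, so
\[
  \cork_H(\mathcal O)=\max_{\overline{\bbk^\times\mathcal O}}\rho\ \le\ \max_{\overline{\bbk^\times S}}\rho,
\]
and the right-hand side equals the generic value of $\rho$ on $S$, which by part {\sf (i)} is $\cork_H(Gx)$ for any $x\in S$.

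The part I expect to require most care is the elementary-looking bookkeeping of the second paragraph: one must verify that for an irreducible $Z$ the phrase ``generic value of $\rho$ on $Z$'' literally means $\max_Z\rho$ (lower semicontinuity plus irreducibility), and that enlarging $Z$ to $\bbk^\times Z$ or to its closure leaves this maximum unchanged --- here one uses that a nonempty open subset of the larger variety meets $Z$ up to the $\bbk^\times$-action, together with $\bbk^\times$-invariance of $\rho$, and for the closure that $\{\rho\le m\}$ is closed. Granting this, the argument is short; the genuinely substantive ingredients are Lemma~\ref{cork-r}, which is the point where the invariant-theoretic input of~\cite{av} (the equality $\bbk(\mathcal O)^H=\mathrm{Quot}(\bbk[\mathcal O]^H)$) enters, and Lemma~\ref{cork-sheet}.
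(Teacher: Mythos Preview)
Your proof is correct and follows essentially the same route as the paper's: both arguments combine Lemma~\ref{cork-r} (which identifies $\cork_H(Gx)$ with the rank of $\hat x$ on $\textsl{d}_x\gS(\g)^H$, yielding lower semicontinuity on $\g^*$) with Lemma~\ref{cork-sheet} and the associated-cone inclusion $Ge\subset\overline{\bbk^\times(Gx)}$. The paper simply says Lemma~\ref{cork-r} ``readily implies'' the semicontinuity, whereas you make the Gram matrix $\Gamma(x)$ and its $\bbk^\times$-invariance explicit; the logical structure is identical.
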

\begin{proof}
Lemma~\ref{cork-r} readily implies that there is a dense subset of $S$ such that 
$\cork(Gx)=r$ for each orbit $Gx$ in this subset and 
$\cork({\mathcal O})\le r$ for each orbit $\mathcal O\subset \overline{S}$. 

Making use of Lemma~\ref{cork-sheet}, we show that $r\le \cork(Ge)\le r$. Hence $\cork(Ge)=r$. 
Finally  suppose that $Gy\subset S$ is not nilpotent. 
Then $Ge\subset \overline{\bbk^{\times} Gy}$ and in view of  Lemma~\ref{cork-r}
$\cork(Gy)\ge \cork(Ge)=r$. At the same time $\cork(Gy)\le r$.  
This finishes the proof. 
\end{proof}

There are many other characteristics of $H$-actions that 
do not change along a sheet. 

\begin{thm}
Let $S\subset\g$ be a sheet with unique nilpotent orbit $Ge$. Take $Gx\subset S$. Then 
\begin{itemize}
\item[(1)] \ $\trdeg\bbk[Gx]^H=\trdeg\bbk[Ge]^H$; 
\item[(2)] \ $\max_{x'\in Gx} \dim(Hx') = \max_{e'\in Ge} \dim (He')$;
\item[(3)] \ $\dim\mu(Gx)=\dim\mu(Ge)$;
\item[(4)] \ ${\sf def}(Gx)={\sf def}(Ge)$; 
\item[(5)] \ $\max_{\xi\in\mu(Gx)} \dim(H\xi) = \max_{\eta\in\mu(Ge)} \dim (H\eta)$.
\end{itemize}
\end{thm}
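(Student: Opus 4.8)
The plan is to reduce all five equalities to the single statement that the quantity $\alpha(\mathcal O):=\max_{y\in\mathcal O}\dim(Hy)$ takes the same value for $\mathcal O=Gx$ and for $\mathcal O=Ge$, and then to establish that statement by sandwiching it between two inequalities: one coming from the semicontinuity argument that already underlies Theorem~\ref{thm-d}, the other from the polynomial‑restriction trick in the proof of Proposition~\ref{lm-c-n}. If $Gx=Ge$ there is nothing to prove, so throughout I assume $x\notin\N$; recall also that all $G$‑orbits inside a sheet have the same dimension, so $\dim(Gx)=\dim(Ge)$.

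First I would record how $\alpha$ governs everything. By \eqref{cork-dim}, $\dim\overline{\mu(\mathcal O)}=\dim(\gt h y)=\alpha(\mathcal O)$ for $y$ generic in $\mathcal O$, and since $\mu(\mathcal O)$ is dense in its closure this equals $\dim\mu(\mathcal O)$; hence (3) is exactly ``$\alpha(Gx)=\alpha(Ge)$'', and (2) is literally the same assertion. Next, $\bbk(\mathcal O)^H={\rm Quot}(\bbk[\mathcal O]^H)$ (as recalled in the proof of Lemma~\ref{cork-r}, via \cite[Prop.~2.9]{av} and \cite{Losev}), so by Rosenlicht's theorem $\trdeg\bbk[\mathcal O]^H=\dim\mathcal O-\alpha(\mathcal O)$; as $\dim(Gx)=\dim(Ge)$ on a sheet, (1) is again equivalent to ``$\alpha(Gx)=\alpha(Ge)$''. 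Finally, the identity stated right after Definition~\ref{def-def} gives ${\sf def}(\mathcal O)=\dim\mathcal O-\alpha(\mathcal O)-\cork_H(\mathcal O)$, and \eqref{cork-max} gives $\max_{\xi\in\mu(\mathcal O)}\dim(H\xi)=\alpha(\mathcal O)-{\sf def}(\mathcal O)$; since $\dim\mathcal O$ is constant along $S$ and $\cork_H(\mathcal O)$ is constant along $S$ by Theorem~\ref{thm-d}, both (4) and (5) follow once $\alpha(Gx)=\alpha(Ge)$ is known. So the whole theorem reduces to $\alpha(Gx)=\alpha(Ge)$.

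The inequality $\alpha(Ge)\le\alpha(Gx)$ I would get as follows. Being the unique nilpotent orbit of $S$, $Ge$ is dense in the associated cone of $Gx$, so $Ge\subset\overline{\bbk^{\times}\!(Gx)}$. The function $y\mapsto\dim(Hy)$ on $\g^*$ is lower semicontinuous and invariant under the $\bbk^{\times}$‑scaling, hence its maximum over the irreducible variety $\overline{\bbk^{\times}\!(Gx)}$ is attained on a dense open subset; that subset meets $\bbk^{\times}\!(Gx)$, so the maximum equals $\alpha(Gx)$, and $Ge\subset\overline{\bbk^{\times}\!(Gx)}$ then forces $\alpha(Ge)\le\alpha(Gx)$. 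For the reverse inequality $\alpha(Ge)\ge\alpha(Gx)$, equivalently $\trdeg\bbk[Ge]^H\le\trdeg\bbk[Gx]^H$, I would note that $\bbk[\g^*]^H$ restricts \emph{onto} $\bbk[\overline{\mathcal O}]^H$ (because $H$ is reductive), while $\bbk[\overline{\mathcal O}]^H\subset\bbk[\mathcal O]^H$ is integral by \cite[Lemma~3.7]{bokr}; hence $\trdeg\bbk[\mathcal O]^H=\trdeg\bigl(\gS(\g)^H|_{\mathcal O}\bigr)$. Now $\gS(\g)^H$ is a homogeneous subalgebra of $\gS(\g)$ and $x\notin\N$, so inequality \eqref{eq-con2} from the proof of Proposition~\ref{lm-c-n}, applied with $\eus A=\gS(\g)^H$, gives $\trdeg\bigl(\gS(\g)^H|_{Ge}\bigr)\le\trdeg\bigl(\gS(\g)^H|_{Gx}\bigr)$. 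Combining the two inequalities yields $\alpha(Gx)=\alpha(Ge)$, and with it all of (1)--(5).

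I do not expect a genuine obstacle here: the argument is, in the end, the semicontinuity that already proves Theorem~\ref{thm-d} together with the restriction trick of Proposition~\ref{lm-c-n}, now applied to the homogeneous (though not Poisson‑commutative) algebra $\gS(\g)^H$. The two points that need a little care are (i) checking that \eqref{eq-con2} uses only homogeneity of $\eus A$, not Poisson‑commutativity, so that taking $\eus A=\gS(\g)^H$ is legitimate; and (ii) the bookkeeping $\trdeg\bbk[\mathcal O]^H=\dim\mathcal O-\alpha(\mathcal O)=\trdeg(\gS(\g)^H|_{\mathcal O})$, which relies entirely on the quasi‑affine invariant theory of orbit closures recalled in the proof of Lemma~\ref{cork-r}. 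One could also prove the ``$\le$'' half of (5) directly, by applying the associated‑cone argument to the linear moment map $\mu\colon\g^*\to\gt h^*$ together with $\mu(Ge)\subset\overline{\bbk^{\times}\mu(Gx)}$; but the matching lower bound is cleanest to extract from the numerical identities above, so I would keep the reduction to $\alpha$.
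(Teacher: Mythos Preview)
Your proof is correct. Both you and the paper reduce everything to a single master equality and then derive the remaining items identically, via Rosenlicht, \eqref{cork-dim}, \eqref{cork-max}, and Theorem~\ref{thm-d}; the difference lies only in how that master equality is obtained. The paper proves (1) first by a one-stroke GIT dimension count: choosing a homogeneous $G$-invariant $F$ with $F(x)\ne 0$, one observes that $\overline{Ge}\md H$ is cut out by $F$ inside $\overline{\bbk^\times Gx}\md H$, whence $\dim\overline{Gx}\md H=\dim\overline{Ge}\md H$. You instead take (2) as the hub and sandwich $\alpha(\mathcal O)=\max_{y\in\mathcal O}\dim(Hy)$: the inequality $\alpha(Ge)\le\alpha(Gx)$ from lower semicontinuity of $y\mapsto\dim(Hy)$ on $\overline{\bbk^\times(Gx)}$, and $\alpha(Ge)\ge\alpha(Gx)$ from \eqref{eq-con2} applied with $\eus A=\gS(\g)^H$. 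Your observation that the derivation of \eqref{eq-con2} uses only homogeneity of $\eus A$, not Poisson-commutativity, is correct and is the one point that needs checking. The paper's hypersurface argument packages both inequalities into a single dimension identity on the categorical quotient; your version unbundles them and makes transparent where each direction comes from, at the cost of invoking two separate mechanisms.
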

\begin{proof}
We can safely assume that $x\not\in Ge$ and therefore is not nilpotent. 
Let $F\in\bbk[\g^*]$ be a homogenous $G$-invariant  that is non-zero on $Gx$. Then 
$\overline{Ge}/\!\!/H$ is defined as the zero set of $F$ in $\overline{\bbk^{\times}Gx}/\!\!/H$. 
Hence $\dim \overline{Gx}\md H=\dim\overline{Ge}/\!\!/H$. As we have seen in the proof of Lemma~\ref{cork-r}, $\trdeg\bbk[Gy]^H =\trdeg\bbk[\overline{Gy}]^H$ for each orbit. 
This settles (1). 

By \cite[Prop.~2.9]{av}, we have $\bbk(Gy)^H={\rm Quot}(\bbk[Gy]^H)$ for each orbit. 
Hence the dimension of a generic $H$-orbit on $Gy$  is equal to 
$\dim(Gy)-\trdeg\bbk[Gy]^H$. Thus, (1) implies~(2).  

The dimension of  $\mu(Gy)$ is equal to the dimension of 
a generic $H$-orbit on $Gy$, see \eqref{cork-dim}. Therefore it does not change along a sheet either. 

The defect of a Hamiltonian action can be expressed via the corank
\[
         {\sf def}(Gy)= \dim(Gy) - \max_{y'\in Gy} \dim (H y') - \cork(Gy').
\] 
In view of (3) and  Theorem~\ref{thm-d},  the  defect does not change, ${\sf def}(Gx)={\sf def}(Ge)$. 

Finally,
$\max_{\xi\in\mu(Gy)} \dim (H\xi) = \dim\mu(Gy) - {\sf def}(Gy)$, see \eqref{cork-max}.
\end{proof}
 
Of course, there are examples such that $\mu(Gx)\ne \mu(Ge)$. 
 
\begin{ex}
Consider  $(\g,\gt h)=(\gt{sl}_3,\gt{sl}_2)$ and take $x={\rm diag}(1,1,-2)$. Then
$e$ is a minimal nilpotent element.  Here $\mu(Gx)$ is the $\SL_2$-orbit 
of ${\rm diag}(1,-1)$, and $\mu(Ge)$ is the null-cone in $\gt{sl}_2$. 
We have $\dim(Gx)=4$ and  $\dim\mu(Gx)=\dim\mu(Ge)=2$.  Further, $\bb(\mu(Gx))=1=\bb(\mu(Ge))$. 
The $\SL_2$-action on $Gx$ and on  $Ge$ has corank $1$. 
\end{ex} 
  
\begin{ex} 
 Take $G=\GL_3$, $H=\GL_2$, $x={\rm diag}(2,2,1)$. 
 Here the $H$-action on each $Gy\subset \g^*$ is coisotropic.    
We have
\[
\mu(Gx)=\bigcup_{a\in\bbk} H\left(\begin{array}{cc} 2 & 0 \\ 0 & a \end{array}\right) \cup H\left(\begin{array}{cc} 2 & 1 \\ 0 & 2 \end{array}\right).
\]  
Further,  $e$ is conjugate to $E_{12}$ in $\mathfrak{sl}_3$ and 
\[
\mu(Ge)=\bigcup_{a\in\bbk} H\left(\begin{array}{cc} 0 & 0 \\ 0 & a \end{array}\right) \cup H\left(\begin{array}{cc} 0 & 1 \\ 0 & 0 \end{array}\right).
\] 
\end{ex}

\subsection{The orthogonal case}
There are  the orthogonal versions of the Gelfand--Tsetlin subalgebra and the $\bl$-system of 
Guillemin--Sternberg. Suppose that $\g=\son=\gt{so}_n(\bbk)$. Fix a sequence 
\[
   \gt{so}_n \supset \gt{so}_{n-1}\supset \ldots \supset \gt{so}_3\supset \gt{so}_2.
\]
Let $\eus C\subset\gS(\g)$ be the subalgebra generated by $\gS(\gt{so}_m)^{\gt{so}_m}$ with 
$n\ge m\ge 2$. Then $\eus C$ is the image in $\gS(\g)$ of the  famous commutative GT-subalgebra of 
 $\eus U(\g)$~\cite{gt-2}.  Hence  $\{\eus C,\eus C\}=0$. 
Similar to the $\gln$ case, $\eus C$ has $\bb(\g)$ algebraically independent generators.  
Comparing Poincar\'e series 
one can prove that in the orthogonal case, the GT-subalgebra $\eus C$ cannot be realised 
as a limit of MF-subalgebras. Nevertheless, our results in~\cite[Sect.~6.2]{oy} show that 
$\eus C$ is a maximal Poisson-commutative subalgebra of $\gS(\g)$.

With the obvious changes, one defines strongly regular and strongly nilpotent elements, as well as 
the $\bl$-system related to eigenvalues. In the orthogonal case, there are no strongly nilpotent elements 
$e$ such that $\dim\textsl{d}_e \eus C=\bb(\g)$ if $n\ge 4$, see \cite[Prop.~5.14]{EvC}. Theorem~4.17 
of  that paper asserts  that $\eus C$ is complete on each {\sl regular} coadjoint orbit. 
We prove that $\eus C$ is complete on {\sl each} coadjoint orbit, lifting the assumption that the orbit is regular.  

\begin{thm}               \label{orth}
For any $x\in\son$, the GT-subalgebra $\eus C\subset \gS(\son)$ is complete on every (co)adjoint orbit 
$\SO_n x$ and the action of\/ $\SO_{n-1}$  on\/  $\SO_n x$ is coisotropic.   
\end{thm}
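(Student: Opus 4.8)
The proof will go by induction on $n$, establishing simultaneously that the $\SO_{n-1}$-action on every adjoint orbit of $\SO_n$ is coisotropic and that the GT-subalgebra is complete on every such orbit; the cases $n\le 3$ are immediate. Write $G=\SO_n$, $H=\SO_{n-1}$, $\g=\son$, $\h=\gt{so}_{n-1}$, let $\eus C\subset\gS(\g)$ be the GT-subalgebra of $\g$ and $\eus C'\subset\gS(\h)$ the GT-subalgebra of $\h$ (generated by the $\gS(\gt{so}_m)^{\gt{so}_m}$, $2\le m\le n-1$). Since $(G,H)$ is a strong Gelfand pair, all the equivalent conditions of Section~\ref{sub-strong} are available; we shall use {\sf(PCm)} and {\sf(DCn)}.

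\emph{Coisotropy.} By {\sf(PCm)} the algebra $\gS(\g)^H=\gS(\g)^{\gt h}$ is Poisson-commutative, so by \eqref{P-br} the form $\hat x$ vanishes on $\textsl{d}_x(\gS(\g)^H)$ for every $x\in\g^*$. Lemma~\ref{cork-r} then gives $\cork_H(Gx)=0$ for every orbit, i.e.\ the $H$-action on $Gx$ is coisotropic. (Alternatively: by {\sf(Cois)} the action is coisotropic on regular semisimple orbits, these are dense in the sheet $\g^*_{\sf reg}$, and $\overline{\g^*_{\sf reg}}=\g^*$, so Theorem~\ref{thm-d} yields $\cork_H\equiv0$.)

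\emph{Completeness.} Fix an orbit $Gx\subset\g^*$, let $\mu\colon Gx\to\h^*$ be the moment map (restriction of functionals) and put $Y=\overline{\mu(Gx)}$, an irreducible $H$-stable closed subvariety of $\h^*$. As $\eus C\cong\gS(\g)^{\g}\otimes\eus C'$ is a polynomial ring whose first factor is constant on $Gx$, and $\mu\colon Gx\to Y$ is dominant, $\trdeg(\eus C|_{Gx})=\trdeg(\eus C'|_{Gx})=\trdeg(\eus C'|_Y)$. Since $\eus C'$ is Poisson-commutative, $\trdeg(\eus C'|_Y)\le\bb(Y)$, and by coisotropy and \eqref{eq-cork-pr} one has $\bb(Y)=\bb(\mu(Gx))=\tfrac12\dim(Gx)$; so it suffices to prove $\trdeg(\eus C'|_Y)\ge\bb(Y)$. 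For this I argue in two steps. First, {\sf(DCn)} together with the constancy of $\gS(\g)^{\g}$ on the affine variety $\overline{Gx}$ gives $\gS(\g)^{\gt h}|_{\overline{Gx}}=\gS(\h)^{\gt h}|_{\overline{Gx}}=\bbk[\overline{Gx}]^H$ (up to constants), and since $\bbk(Gx)^H={\rm Quot}(\bbk[Gx]^H)$ with $\bbk[Gx]^H$ algebraic over $\bbk[\overline{Gx}]^H$ (as in the proof of Lemma~\ref{cork-r}), we get $\trdeg(\gS(\h)^{\gt h}|_Y)=\trdeg\,\bbk(Gx)^H$; a short computation with coisotropy, the defect identity $\dim(Gx)=\dim(\h x)+{\sf def}(Gx)$, and \eqref{cork-dim}, \eqref{cork-max} identifies this number with $\dim Y-\max_{\eta\in Y}\dim(H\eta)=\dim Y/\!\!/H$, so $\gS(\h)^{\gt h}$ (hence $\eus C'$) separates generic $H$-orbits on $Y$. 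Second, by the induction hypothesis $\eus C'$ is complete on the generic $H$-orbit $Hy\subset Y$, giving $g_1,\dots,g_q\in\eus C'$, $q=\tfrac12\dim(Hy)$, algebraically independent on $Hy$. Comparing differentials at a generic $y\in Y$ — the $\textsl{d}_y$ of suitable invariants in $\gS(\h)^{\gt h}$ form a basis of $\Ann(T_y(Hy))$, while the $\textsl{d}_y g_j$ are independent modulo it — shows that these $(\dim Y-\dim(Hy))+q=\dim Y-\tfrac12\dim(Hy)=\bb(Y)$ functions of $\eus C'$ are algebraically independent on $Y$. Hence $\trdeg(\eus C'|_Y)=\bb(Y)=\tfrac12\dim(Gx)$, i.e.\ $\eus C$ is complete on $Gx$, which closes the induction.

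The main obstacle is the lower bound $\trdeg(\eus C'|_Y)\ge\bb(Y)$, and inside it the identification $\trdeg(\gS(\h)^{\gt h}|_Y)=\dim Y/\!\!/H$: the point is that the ambient invariants already separate generic $H$-orbits on the possibly badly singular variety $Y=\overline{\mu(Gx)}$, and making this precise forces one to combine coisotropy, the condition {\sf(DCn)}, and the constancy-of-dimensions results of Section~\ref{sect:5}. Once that is in hand, splicing in the inductive completeness on one generic $H$-orbit of $Y$ via the differential count is routine. Note that, conversely, completeness on $Gx$ re-derives coisotropy through assertion {\sf(NF)}, since $\eus C'|_{Gx}$ consists of Noether integrals for the $H$-action.
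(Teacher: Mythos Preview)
Your proof is correct and follows the same inductive skeleton as the paper: establish coisotropy of the $H$-action, use \eqref{eq-cork-pr} to get $\bb(Y)=\tfrac12\dim(Gx)$ for $Y=\overline{\mu(Gx)}$, and then compute $\trdeg(\eus C'|_Y)$ by combining separation of generic $H$-orbits on $Y$ by $\gS(\h)^{\h}$ with the inductively known completeness of $\eus C'$ on those orbits. The two proofs differ in how two of these steps are carried out.

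For coisotropy, your route via {\sf(PCm)} together with Lemma~\ref{cork-r} is a genuine shortcut compared to the paper. The paper instead shows that the $H$-action on the regular nilpotent orbit is coisotropic (via sphericity of $G/B$ and Theorem~\ref{sp-R}), and then propagates $\cork_H=0$ to all orbits by means of Theorem~\ref{thm-d}; your argument bypasses the sheet machinery entirely.

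Conversely, your derivation of $\trdeg(\gS(\h)^{\h}|_Y)=\dim Y/\!\!/H$ through {\sf(DCn)} and the defect/corank identities is more roundabout than necessary, and at the very last step still relies implicitly on $\dim Y/\!\!/H=\dim Y-\max_{\eta\in Y}\dim(H\eta)$. The paper obtains this directly: since $Y\subset\h^*$, every fibre of the quotient $Y\to Y/\!\!/H$ contains a dense $H$-orbit (this being inherited from $\h^*\to\h^*/\!\!/H$), whence $\dim\textsl{d}_y(\gS(\h)^{\h})=\dim Y-r$ for generic $y\in Y$ with $r=\max_{y\in Y}\dim(Hy)$, and no appeal to {\sf(DCn)} is needed. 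Both routes reach the same conclusion.
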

\begin{proof}
Assume that both statements are true for $\SO_{n-1}$. The base of induction is the case $n=2$, where the assertions  are obvious.  

Since $(G,H)=(\SO_n,\SO_{n-1})$ is a strong Gelfand pair, the action of $\SO_{n-1}$ on $G/B$ is spherical, see Remark~\ref{SGP-cond}.
Hence the action of $\SO_{n-1}$ on $T^*(G/B)$ and its image under the moment map
$\mu\!: T^*(G/B) \to \g^*$ is coisotropic, see Section~ \ref{sec-flag} and \cite[Sect.~2.3]{av}. The regular nilpotent orbit $G\boldsymbol{e}\subset \g^*$ is dense in this image. 
Therefore the action of $\SO_{n-1}$ on $G\boldsymbol{e}$ is coisotropic, cf. \cite[Thm~2.6]{av}.   The same can be said about any Richardson orbit. 
However, not every nilpotent orbit in $\gt{so}_n$ is Richardson. 

The unique sheet containing $G\boldsymbol{e}$ is $\g^*_{\sf reg}$. 
By Lemma~\ref{cork-sheet}, $\cork_H({\mathcal O})=0$ for each 
${\mathcal O}\subset \g^*_{\sf reg}$. Theorem~\ref{thm-d}  extends this fact to all orbits,
cf. \cite[Prop.~2.7]{av}. 

Next we need to go through  the standard inductive argument used, for example, in~\cite{gs2}. 
Let $\eus C^{[1]}$ be the Gelfand--Tsetlin subalgebra in $\gS(\gt h)$ and 
$\eus C^{[2]}$  --- in $\gS(\gt{so}_{n-2})$. 
Set $Y=\overline{\mu(Gx)}$. 
Each fibre of the quotient map $Y\to Y/\!\!/ H$ contains an open orbit. 
Therefore $\dim Y/\!\!/ H = \dim Y - r$, where $r=\max_{y\in Y}\dim (\gt h y)$. 
Hence also $\dim\textsl{d}_y(\gS(\gt h)^{\gt h})=\dim Y-r$ for generic $y\in Y$. 
By induction, $\eus C^{[1]}$ is complete on each $Hy\subset Y$. 
More precisely, $\eus C^{[2]}$ is complete on $Hy$. 
Since $\gS(\gt h)^{\gt h}\subset \eus C^{[1]}$, we have 
\[
        \dim\textsl{d}_y\eus C^{[1]}=(\dim Y-r)+\frac{r}{2} =\bb(Y)
\] 
for generic $y\in Y$. 
Since the action of $H$ on $Gx$ is coisotropic, we have $\bb(Y)=\frac{1}{2}\dim (Gx)$ 
by~\eqref{eq-cork-pr} and thereby $\eus C^{[1]}$ is complete on $Gx$. 
Thus, $\eus C$ is complete on $Gx$.  
\end{proof}

\end{document}